\theoremstyle{plain}
    \newtheorem{thm}{Theorem}[section]
     \newtheorem{conjecture}[thm]{Conjecture}
    \newtheorem{corollary}[thm]{Corollary}
    \newtheorem{lemma}[thm]{Lemma}
    \newtheorem{proposition}[thm]{Proposition}
    \newtheorem{question}[thm]{Question}
    \newtheorem{theorem}[thm]{Theorem}
\theoremstyle{definition}
    \newtheorem{definition}[thm]{Definition}
    \newtheorem{notation}[thm]{Notation}
    \newtheorem*{notation*}{Notation and Terminology}
    \newtheorem{remark}[thm]{Remark}
    \newtheorem*{ack}{Acknowledgments}
\theoremstyle{remark}
\newcommand{\Q}{\mathbb{Q}}
\newcommand{\R}{\mathbb{R}}
\newcommand{\Z}{\mathbb{Z}}
\newcommand{\alb}{\operatorname{alb}}
\newcommand{\Aut}{\operatorname{Aut}}
\newcommand{\ch}{\operatorname{char}}
\newcommand{\Fix}{\operatorname{Fix}}
\newcommand{\GL}{\operatorname{GL}}
\newcommand{\id}{\operatorname{id}}
\newcommand{\Ker}{\operatorname{Ker}}
\newcommand{\NS}{\operatorname{NS}}
\newcommand{\Per}{\operatorname{Per}}
\newcommand{\Sing}{\operatorname{Sing}}
\newcommand{\Supp}{\operatorname{Supp}}
\newcommand{\N}{\operatorname{N}}
\newcommand{\Alb}{\operatorname{Alb}}
\newcommand{\Pic}{\operatorname{Pic}}
\newcommand{\red}{\mathrm{red}}
\begin{document}

\title[Invariant subvarieties]
{Invariant subvarieties with small dynamical degree}

\address{Department of Mathematics, box 1917, Brown University, Rhode Island 02912, USA}
\email{matsuzawa@math.brown.edu}

\address{Korea Institute For Advanced Study, 
Dongdaemungu Seoul 02455, Republic of Korea}
\email{ms@u.nus.edu, shengmeng@kias.re.kr}

\author{Yohsuke Matsuzawa, Sheng Meng, Takahiro Shibata, De-Qi Zhang, Guolei Zhong}

\address
{
\textsc{National University of Singapore,
Singapore 119076, Republic of Singapore
}}
\email{mattash@nus.edu.sg}

\address
{
\textsc{National University of Singapore,
Singapore 119076, Republic of Singapore
}}
\email{matzdq@nus.edu.sg}

\address
{
\textsc{National University of Singapore,
Singapore 119076, Republic of Singapore
}}
\email{zhongguolei@u.nus.edu}

\begin{abstract}
Let $f:X\to X $ be a dominant self-morphism of an algebraic variety.
Consider the set $\Sigma_{f^{\infty}}$ of $f$-periodic subvarieties of small dynamical degree, the subset $S_{f^{\infty}}$ of maximal elements in $\Sigma_{f^{\infty}}$, and the subset $S_f$ of $f$-invariant elements in $S_{f^{\infty}}$. When $X$ is projective, we prove the finiteness of the set $P_f$ of $f$-invariant prime divisors with small dynamical degree, and give an optimal upper bound 
$$\sharp P_{f^n}\le d_1(f)^n(1+o(1))$$ 
as $n\to \infty$, where $d_1(f)$ is the first dynamic degree.
When $X$ is an algebraic group (with $f$ being a translation of an isogeny), or a (not necessarily complete) toric variety, we give an optimal upper bound 
$$\sharp S_{f^n}\le d_1(f)^{n\cdot\dim(X)}(1+o(1))$$ as $n \to \infty$, which slightly generalizes  a conjecture of S.-W. Zhang for polarized $f$.
\end{abstract}

\subjclass[2010]{
14J50, 
08A35,  
32H50, 
37B40, 
11G10, 
14M25 
}

\keywords{Small dynamical degree, Periodic subvariety, Algebraic group, (Semi-) Abelian variety, Toric variety, Polarized endomorphism}

\maketitle
\tableofcontents

\section{Introduction}

We work over a fixed algebraically closed field $k$.
Throughout the paper, we assume the characteristic of $k$ is zero unless otherwise indicated.

Let $f:X\to X $ be a dominant self-morphism of an algebraic variety $X$.
Let $V\subseteq X$ be an (irreducible) closed subvariety.
We say that $V$ is \textit{$f$-periodic} (resp.\,\textit{$f$-invariant}) if $\overline{f^s(V)}=V$ for some $s>0$ (resp.\,$\overline{f(V)}=V$). 
When $X$ is projective or $f$ is finite (and hence surjective), $\overline{f^s(V)}=V$ implies $V$ is $f$-periodic in the usual sense, i.e., $f^s(V)=V$. 
We say that $V$ is \textit{$f^{-1}$-periodic}  if $f^{-s}(V)=V$ for some $s>0$. 

An $f$-periodic closed subvariety $V$ is said to be of \textit{Small Dynamical Degree} 
({\it SDD} for short) if the first dynamical degree $d_1(f^s|_V)<d_1(f^s)$ for some $s>0$ such that $\overline{f^s(V)}=V$.  
Denote by $\Sigma_{f^{\infty}}$ the set of $f$-periodic (irreducible closed) subvarieties of small dynamical degree.
For $V\in \Sigma_{f^{\infty}}$, we say that $V$ is {\it maximal} if $V\subseteq V'$ and $V'\in \Sigma_{f^{\infty}}$ imply $V=V'$.

Denote by $S_{f^{\infty}}$ the set of maximal 
elements in $\Sigma_{f^{\infty}}$.
Denote by 
$$S_f \hskip 1pc (\textup{resp.}\,\,\, P_f)$$ 
the subset of 
$S_{f^{\infty}}$, consisting of all $f$-invariant elements (resp.\,prime divisors).
In general, $\Sigma_{f^{\infty}}$ may have infinitely many non-maximal elements. So the question below is natural.

\begin{question}\label{Q1}
Let $f: X \to X$ be a dominant self-morphism of an algebraic variety $X$.
\begin{itemize}
\item[(1)]
Is $S_f$ a finite set? Namely, are there only finitely many $f$-invariant maximal $f$-periodic subvarieties of small dynamical degree?

\item[(2)]
Is $P_f$ a finite set? Namely, are there only finitely many prime divisors in $S_f$?
\end{itemize}
\end{question}

\begin{remark}
Question \ref{Q1} was first proposed in \cite[Question 8.4\,(3)]{MMSZ20} by the first four authors and the motivation is to study the points with small arithmetic degrees.
Indeed, together with positive answers to other two arithmetic questions in \cite[Question 8.4]{MMSZ20} (cf.~Question \ref{ques_subvar}), it is easy to show the equivalence of the (weak) Kawaguchi-Silverman Conjecture (cf.~\cite{KS16}, Conjecture \ref{conj_ks}) and the (strong) small Arithmetic Non-Density (sAND) conjecture (cf.~\cite[Conjecture 1.4]{MMSZ20} = Conjecture \ref{conj_zf}); see Theorem \ref{thm-equiv}.
In this paper, we will mainly focus on Question \ref{Q1} itself.
We refer to \cite[Theorem 2.33]{LS19} for periodic SDD subvarieties of Hyperk\"ahler varieties.
\end{remark}

When studying many dynamical problems, we usually deal with the periodic subvarieties instead of simply invariant ones to observe some ``growth" property of $f$.
In general, the set $S_{f^{\infty}}$ may not be finite.
For example, when $X$ is projective and $f$ is $q$-polarized with $q>1$, the set $S_{f^{\infty}}$ coincides with the set of {\it periodic points} $\Per(f)$, which is Zariski dense and countable by \cite[Proof of Theorem 5.1]{Fak03}.
Therefore, it is reasonable to propose the following question.

\begin{question}\label{Q2}
Let $f: X \to X$ be a dominant self-morphism of an algebraic
variety.
Does the upper bound (of cardinality)
$$\sharp S_{f^n}\le (d_1(f))^{n\cdot\dim(X)}(1+o(1))$$ hold as $n\to \infty$?
\end{question}

\begin{remark}\label{rmk-swz}
(1)
Question \ref{Q2} is sort of a generalization of \cite[Conjecture 1.2.2]{Zha06} which claims that if $X$ is projective and $f$ is $q$-polarized, then $\sharp \Fix(f^n)=q^{n\cdot\dim(X)}(1+o(1))$ as $n\to \infty$.
In this case, the set of fixed points $\Fix(f^n)$ is just $S_{f^{n}}$ and Question \ref{Q2} has a positive answer when $X$ is further smooth and general fixed points have multiplicity one; see the argument after \cite[Conjecture 1.2.2]{Zha06} by using Lefschetz fixed point theorem and Serre's Theorem, i.e., \cite[Theorem 1.1.2]{Zha06} (which are available for smooth $X$, but there are no easily applicable results even for singular surfaces) and noting that $\Fix(f^n)$ is a finite set for each $n \ge 1$.

(2)
Here is a concrete example showing that the upper bound of the inequality in Question \ref{Q2} is optimal.
Let $A$ be an abelian variety and $f:=(-q)_A$ the multiplication map by $-q$ (with $q>1$) on $A$.
Then $f$ is $q^2$-polarized and $d_1(f)=q^2$. 
Hence $\sharp S_{f^{2n+1}}=\sharp\textup{Fix}(f^{2n+1})$ is the number of $(q^{2n+1}+1)$-torsion points.
So we have 
$$(q^2)^{(2n+1)\dim(A)}<\sharp S_{f^{2n+1}}=(q^{2n+1}+1)^{2\dim(A)}=(q^2)^{(2n+1)\dim(A)}(1+o(1))$$ 
as $n\to\infty$. 
\end{remark}

Theorems \ref{main-thm-gp}, \ref{main-thm-pf} and \ref{main-thm-all} are our main results.

The key idea of our first result concerning isogenies (and their translations) of algebraic groups is to find a {\it unique} maximal periodic SDD subgroup; see Definition \ref{def-Gf}.
We leave it to the readers to consider more general cases, e.g., spherical varieties, (almost) homogeneous varieties.
We use the fact $d_1(f^n)=d_1(f)^n$ for any $n>0$ to get the following upper bound when applying theorems in Section \ref{sec-5}.

\begin{theorem}\label{main-thm-gp}
Let $f$ be a dominant self-morphism of a variety $X$.
Then
$$\sharp S_{f^n}\le (d_1(f))^{n\cdot\dim(X)}(1+o(1))$$ holds as $n\to \infty$,
if $(X, f)$ fits one of the following cases.

\begin{itemize}
\item[(1)]
(cf.~Theorem \ref{thm_alg_group}) $X$ is an algebraic group; and $f=R_a\circ g$ where $g$ is an isogeny and $R_a$ is the right multiplication map by some $a\in X$  (when $X$ is semi-abelian, the self-morphism $f$ is automatically of this form).

\item[(2)]
(cf.~Theorem~\ref{thm-toric})
$X$ is a toric variety with the big torus $T$ being $f$-invariant.
\end{itemize}
\end{theorem}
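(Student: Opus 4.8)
The plan is to reduce both cases to the two dedicated theorems (Theorem~\ref{thm_alg_group} and Theorem~\ref{thm-toric}) cited in the statement, using the multiplicativity of the dynamical degree to convert their structural conclusions into the desired asymptotic count. The crucial preliminary observation is that for $f$ a dominant self-morphism of a variety, one has $d_1(f^n)=d_1(f)^n$ for every $n>0$; this is standard (it follows from the fact that the first dynamical degree is computed as a spectral radius, or directly from the submultiplicativity/supermultiplicativity estimates for the intersection-theoretic degree). Consequently the target bound $\sharp S_{f^n}\le d_1(f)^{n\cdot\dim(X)}(1+o(1))$ is the same statement whether phrased in terms of $d_1(f^n)$ or $d_1(f)^n$, which is exactly what lets us feed iterates of $f$ into the later theorems without losing uniformity.

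For case~(1), I would first record that when $X$ is semi-abelian every dominant self-morphism automatically has the form $R_a\circ g$ with $g$ an isogeny, so the hypothesis is no restriction there; in general it is imposed. The key input is Theorem~\ref{thm_alg_group}, which (via Definition~\ref{def-Gf}) produces a \emph{unique} maximal $f$-periodic SDD subgroup $G_f\subseteq X$. Uniqueness is what forces finiteness: any element of $S_f$, being $f$-invariant and maximal among periodic SDD subvarieties, must be a translate structured by this $G_f$, and the translates occurring are controlled by torsion-type data (fixed points of the induced map on $X/G_f$). One then counts: the number of $f^n$-invariant maximal periodic SDD subvarieties is bounded by the number of relevant periodic points on the quotient, and for isogenies of algebraic groups this count grows like $(\deg f^n)^{\dim}$, which matches $d_1(f^n)^{\dim}$ up to the degree/dynamical-degree comparison. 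I would extract the precise asymptotic from Theorem~\ref{thm_alg_group}'s statement rather than re-derive it.

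For case~(2), the argument runs through Theorem~\ref{thm-toric}: when $X$ is toric with $f$-invariant big torus $T$, the self-morphism $f$ restricts to a group endomorphism (up to translation) of $T\cong(k^\times)^r$, and the periodic SDD subvarieties are governed by the fan combinatorics together with the action of $f_*$ on the cocharacter lattice. The maximal periodic SDD subvarieties correspond to torus orbit closures (or their translates under torsion), and Theorem~\ref{thm-toric} gives the corresponding enumeration with the same $d_1(f)^{n\dim X}$ leading term. In both cases the final step is purely bookkeeping: combine the structural finiteness/counting statement with $d_1(f^n)=d_1(f)^n$ and absorb lower-order contributions (finitely many fans cones, bounded torsion corrections) into the $(1+o(1))$ factor.

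The main obstacle I anticipate is not in this reduction but in the two underlying theorems: establishing the \emph{uniqueness} of the maximal periodic SDD subgroup $G_f$ in the algebraic-group case, and obtaining the \emph{sharp} (not merely polynomial) growth rate of the torsion/fixed-point counts on the quotient so that the exponent is exactly $n\cdot\dim(X)$ with the right base $d_1(f)$. Once those are in hand — and they are precisely the content of Theorems~\ref{thm_alg_group} and~\ref{thm-toric} — the present theorem is a short corollary, so in this proof I would simply cite them and perform the degree-versus-dynamical-degree translation carefully, making sure the $o(1)$ is uniform as $n\to\infty$.
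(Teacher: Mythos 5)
Your proposal matches the paper's proof: Theorem~\ref{main-thm-gp} is obtained precisely by applying Theorem~\ref{thm_alg_group} and Theorem~\ref{thm-toric} to the iterate $f^n$ (which is again of the form $R_{a'}\circ g^n$, resp.\ still preserves $T$) and using $d_1(f^n)=d_1(f)^n$, together with $G(f^n)=G(f)$ in case (1) and the fact that in case (2) the single top-dimensional orbit $T$ supplies the leading term of the sum. The substantive work lives in those two theorems, which you correctly treat as the cited inputs.
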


Theorem \ref{main-thm-pf} 
below is a simpler version of
Theorem~\ref{thm_divisor_case}, and
answers Question \ref{Q1}\,(2) affirmatively: the finiteness of SDD prime divisors.
The main idea is to apply the higher dimensional Hodge index theorem to construct sufficiently many pairwise disjoint prime divisors on a smooth model.
In this way, we are able to construct an equivariant fibration to a curve which contracts those  periodic SDD prime divisors; and then what left is simply to count the fixed points of a polarized endomorphism on a curve.

\begin{theorem}\label{main-thm-pf} (cf.~Theorem~\ref{thm_divisor_case})
Let $f$ be a surjective endomorphism of a projective variety. Then
$$\sharp P_{f^n}\le (d_1(f))^n(1+o(1))$$
holds as $n\to \infty$.
\end{theorem}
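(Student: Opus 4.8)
\emph{Step 1: reductions and the nef eigenclass.} If $d_1(f)\le 1$ there is nothing to prove, since then no $f$-periodic subvariety can be SDD and $P_{f^n}=\emptyset$; so put $d:=d_1(f)>1$. First I would reduce to the case that $X$ is smooth projective of dimension $n$ and $f$ a surjective endomorphism in the usual sense: first dynamical degrees are birational invariants, an $f$-equivariant resolution changes $\Sigma_{f^{\infty}}$ (hence each $P_{f^n}$) only by a fixed finite number of exceptional elements, and replacing $f$ by an iterate merely rescales the desired bound. Next, applying Perron--Frobenius to $f^{*}$ acting on the nef cone, fix a nonzero nef class $\theta\in N^{1}(X)_{\mathbb{R}}$ with $f^{*}\theta=d\,\theta$ (the same $\theta$ serves for every iterate). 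The role of $\theta$: if $D$ is any $f^{s}$-invariant prime divisor lying in $\Sigma_{f^{\infty}}$, then $\theta|_{D}$ is nef on the normalization of $D$ and $(f^{s}|_{D})^{*}(\theta|_{D})=d^{s}\,\theta|_{D}$; since a nef and big class $L$ on a projective variety $W$ with $g^{*}L\equiv\lambda L$ always satisfies $\lambda\le d_1(g)$, the SDD inequality $d_1(f^{s}|_{D})<d^{s}$ forces $\theta|_{D}$ to be \emph{not} big, i.e.
\[
\theta^{\,n-1}\cdot D=(\theta|_{D})^{\dim D}=0 .
\]
Hence every member of $\bigcup_{n\ge1}P_{f^n}$ is a prime divisor contained in $\Null(\theta)$, a locus that does not depend on the chosen iterate.

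\emph{Step 2: the case $\theta$ big.} If $\theta^{\,n}>0$, then $\theta$ is nef and big and, by Nakamaye's theorem, $\Null(\theta)$ equals the augmented base locus $\mathbf{B}_{+}(\theta)$, a proper closed subset of $X$ with some fixed finite number of codimension-one components. Every divisor in $\bigcup_{n}P_{f^n}$ is one of these, so $\sharp P_{f^n}$ is bounded independently of $n$ and the asserted bound is immediate.

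\emph{Step 3: the case $\theta$ not big --- the hard part.} Assume $\theta^{\,n}=0$. The plan is to build, after replacing $f$ by an iterate $f^{s}$, an $f^{s}$-equivariant surjective morphism $\pi\colon X\to C$ onto a smooth projective curve $C$ with: (i) every divisor in $\bigcup_{n}P_{f^n}$ is a component of a fibre of $\pi$; and (ii) $\deg(f^{s}|_{C})=d^{s}$, equivalently $d_1(f^{s}|_{C})=d_1(f^{s})$. For (i) and the existence of $\pi$ I would invoke the higher-dimensional Hodge index (Khovanskii--Teissier) theorem: for distinct prime divisors $D_{1},\dots,D_{m}$ with $\theta^{\,n-1}\cdot D_{i}=0$, one has $\theta^{\,n-2}\cdot D_{i}\cdot D_{j}\ge 0$ for $i\neq j$ (distinct primes meet in codimension $\ge 2$ and $\theta$ is nef), while the index theorem forbids the form $\alpha\mapsto\theta^{\,n-2}\cdot\alpha^{2}$ from having two positive eigenvalues on $N^{1}(X)_{\mathbb{R}}$; comparing these forces all but boundedly many of the $D_{i}$ to be pairwise disjoint, hence to be members of an algebraic family that one recognizes as the fibres of a fibration $X\dashrightarrow C$. (When $\theta^{\,n-1}=0$, i.e. the numerical dimension of $\theta$ is smaller, I would first pass to a quotient by an auxiliary equivariant fibration to reduce to the case $\theta^{\,n-1}\neq0$.) One then promotes $\pi$ to a morphism and makes it $f^{s}$-equivariant by rigidity of fibrations and the standard dynamical bookkeeping; property (ii) is arranged by taking $C$ to be the target of the maximal such fibration --- were $\deg(f^{s}|_{C})<d^{s}$, the Dinh--Nguyen product inequality applied to a general fibre would give $d<d$. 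I expect this step to be the main obstacle: extracting an honest equivariant fibration onto a curve from the Hodge-index/disjointness data, and in particular handling nef classes $\theta$ of intermediate, possibly non-abundant, numerical dimension.

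\emph{Step 4: counting on the curve.} Given $\pi\colon X\to C$ as above, no divisor in $\bigcup_{n}P_{f^n}$ can dominate $C$: such a $D$ would admit a dominant $f$-equivariant map $D\to C$, whence $d_1(f|_{D})\ge d_1(f|_{C})=d$, contradicting SDD. So each $D\in\bigcup_{n}P_{f^n}$ is a component of a fibre of $\pi$. Outside a fixed finite $f$-invariant subset $B\subset C$ the fibres of $\pi$ are irreducible and reduced, and the components of $\pi^{-1}(B)$ supply only finitely many divisors to $\bigcup_{n}P_{f^n}$; every other $D\in P_{f^n}$ is the reduced fibre $\pi^{-1}(c)$ over a point $c$ with $f^{n}(c)=c$. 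Hence $\sharp P_{f^n}\le\sharp\Fix(f^{n}|_{C})+O(1)$. Finally $f^{n}|_{C}$ is a degree-$d^{\,n}$ self-morphism of the smooth projective curve $C$, so intersecting its graph with the diagonal of $C\times C$ (equivalently, the Lefschetz fixed-point formula together with the Weil bound on $\operatorname{tr}\bigl((f^{n}|_{C})^{*}\!\mid H^{1}\bigr)$) yields
\[
\sharp\Fix(f^{n}|_{C})\le d^{\,n}+1+2\,g(C)\,d^{\,n/2}=d^{\,n}(1+o(1)),
\]
and therefore $\sharp P_{f^n}\le d_1(f)^{n}(1+o(1))$ as $n\to\infty$, as claimed.
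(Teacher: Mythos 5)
Your skeleton is the paper's: a nef Perron--Frobenius eigenclass, degeneracy of SDD divisors against it, a Hodge-index argument producing disjoint divisors, a fibration onto a curve, and a Lefschetz/Weil count of fixed points there. Steps 1, 2 and 4 are essentially sound, with one caveat: the paper's Lemma \ref{lem_weak_sdd} extracts the stronger conclusion $L_f|_D\equiv 0$, i.e.\ $L_f\cdot D\equiv_w 0$ (cut down by \emph{ample} classes), not merely that $\theta|_D$ fails to be big; this stronger form is exactly the input the Hodge-index machinery needs, and your weaker $\theta^{\,n-1}\cdot D=0$ does not suffice when $\theta$ itself is far from big.

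The genuine gap is Step 3, as you anticipated, and it is twofold. First, the form $\alpha\mapsto\theta^{\,n-2}\cdot\alpha^{2}$ cannot do the job: in the very situation you are trying to reach, $\theta$ is numerically the pullback of an ample class from a curve, so $\theta^{2}\equiv_w 0$ and your form vanishes identically once $\dim X\ge 3$; and even on surfaces the signature constraint alone does not force disjointness --- the components of a single reducible fibre forming a cycle are arbitrarily many pairwise non-disjoint prime divisors orthogonal to $\theta$, with the intersection form negative semi-definite on their span. The paper's mechanism is different (Lemma \ref{lem_N_disjoint}): pigeonhole $\rho^{\textup{ns}}(X)+1$ of the divisors, together with the exceptional divisors of a resolution, to get a numerical equivalence between two effective divisors with no common component, and then apply Proposition \ref{prop_generalization_hodge} ($L\cdot D_1\equiv_w 0$, $D_1\equiv D_2$ effective without common components imply $L\equiv aD_1$ and $\Supp D_1\cap\Supp D_2=\emptyset$) to extract about $2\lfloor \sharp P_f/(\rho^{\textup{ns}}(X)+1)\rfloor$ pairwise disjoint members. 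Second, passing from ``many pairwise disjoint divisors'' to a fibration over a curve is itself a theorem --- the paper invokes Bogomolov--Pirutka--Silberstein, which needs more than $\rho(\widetilde X)$ disjoint divisors on a smooth model --- and your parenthetical reduction ``when $\theta^{\,n-1}=0$, pass to a quotient by an auxiliary equivariant fibration'' has no content as written. With these two points repaired (NS-rank pigeonhole plus the Hodge-index proposition for disjointness, then BPS and the rigidity lemma for the equivariant fibration, as in Proposition \ref{lem_reduction_nef}), the remainder of your argument goes through and matches the paper's.
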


The upper bound of the inequality in Theorem \ref{main-thm-pf} above is optimal.
Indeed, let $X=Y\times E$, where $Y$ is any projective variety and $E$ is an  elliptic curve   and let $f=\text{id}_{Y}\times f_E$ where $f_E := (-q)_E$ is the multiplication map by $-q$ (with $q>1$) on $E$.	
In this case, by the product formula (cf.~Notation \ref{not-2.1}), $d_1(f) = q^2$, and
a prime divisor $P\in  P_{f^n}$ if and only if it is an $f^n$-stable fibre of the natural projection $X\to E$.
Thus $\sharp P_{f^{2n+1}} = \sharp \Fix(f_E^{2n+1}) = (q^{2n+1}+1)^{2} = (d_1(f))^{2n+1}(1+o(1))$
(cf.~Remark \ref{rmk-swz}).

When $X$ is projective, Question \ref{Q1}\,(2) can be reformulated as follows.

\begin{theorem}\label{main-thm-equiv}
Let $f: X \to X$ be a surjective endomorphism of a projective variety $X$. 
Then the following are equivalent.

\begin{enumerate}
\item  The set $P_{f^\infty}:=\bigcup_{n=1}^{\infty} P_{f^n}$ is infinite.

\item  Replacing $X$ by its normalization, there is an (iteration of) $f$-equivariant fibration $\pi:X\to C$ onto a smooth projective curve $C$ such that $g:=f|_C$ is $d_1(f)$-polarized and the relative dynamical degree  $d_1(f|_\pi)<d_1(f)$.
\end{enumerate}
In particular, the set of $f^{-1}$-periodic  prime divisors of small dynamical degree is finite.
\end{theorem}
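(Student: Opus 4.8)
The plan is to prove the equivalence (1) $\Leftrightarrow$ (2) and then derive the last assertion on $f^{-1}$-periodic SDD prime divisors as a corollary.

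For the direction (2) $\Rightarrow$ (1): Suppose after normalizing $X$ we have an $f$-equivariant fibration $\pi : X \to C$ to a smooth projective curve with $g = f|_C$ being $d_1(f)$-polarized and $d_1(f|_\pi) < d_1(f)$. Since $d_1(g) = d_1(f) > 1$, the endomorphism $g$ has infinitely many periodic points on $C$ (the periodic points of a polarized endomorphism of positive-dimensional projective variety are Zariski dense, hence here infinite), and for each such point $c$ with $g^n(c) = c$ the fibre $X_c = \pi^{-1}(c)$ is an $f^n$-periodic prime divisor (after possibly passing to a further iterate to make $\pi^{-1}(c)$ irreducible, or by taking the irreducible components, which are permuted by $f^n$). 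The relative dynamical degree product formula gives $d_1(f^n|_{X_c}) \le d_1(f^n|_\pi) = d_1(f|_\pi)^n < d_1(f)^n = d_1(f^n)$, so $X_c$ has small dynamical degree; after passing to the maximal element of $\Sigma_{f^\infty}$ containing $X_c$ one still stays among fibre-like divisors (a maximal SDD subvariety containing a fibre and strictly larger would have to dominate $C$, contradicting $d_1(f|_\pi) < d_1(f)$), so these give infinitely many elements of $P_{f^\infty}$.

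For the direction (1) $\Rightarrow$ (2): this is the substance and it is essentially the content of Theorem \ref{main-thm-pf} / Theorem \ref{thm_divisor_case}, whose proof (as indicated in the introduction) goes as follows. Replace $X$ by its normalization and then by a smooth $f$-equivariant model $\widetilde X$ (using equivariant resolution in characteristic zero) so we may assume $X$ smooth projective; dynamical degrees and the SDD condition are birationally invariant, and the strict transforms of the divisors in $P_{f^n}$ still form an infinite family of $f^n$-periodic prime divisors of small dynamical degree (with uniformly bounded period after replacing $f$ by an iterate, or handled by pigeonhole). Now invoke the higher-dimensional Hodge index theorem: an infinite set of prime divisors each of small dynamical degree must contain arbitrarily large subsets that are \emph{numerically} pairwise "orthogonal" in the appropriate sense, and in fact one extracts infinitely many pairwise disjoint prime divisors $D_i$; their classes span rays on which the $f^*$-action is controlled, forcing the existence of a nef class $L$ with $L^{\dim X} = 0$ but $L^{\dim X - 1} \ne 0$ that is $f^*$-eigen up to scalar, i.e. an $f$-equivariant fibration $\pi : X \to C$ (after Stein factorization and replacing $f$ by an iterate) contracting cofinitely many of the $D_i$. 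On the base curve, $f|_C$ then satisfies $(f|_C)^* H \equiv d_1(f) H$ for an ample $H$, so $f|_C$ is $d_1(f)$-polarized; and the product formula $d_1(f) = \max\{d_1(f|_C),\, d_1(f|_\pi)\}\cdot(\text{lower term})$, combined with $d_1(f|_C) = d_1(f)$ and the fact that the contracted $D_i$ are SDD, yields $d_1(f|_\pi) < d_1(f)$.

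The main obstacle is exactly this last direction: turning "infinitely many SDD prime divisors" into an honest fibration. The delicate points are (a) controlling the $f^*$-action on $\mathrm{N}^1(X)$ so that an infinite family of periodic (not merely invariant) divisors produces a genuine $f$-invariant nef class after passing to an iterate — here one uses finiteness of the relevant Perron–Frobenius-type data plus pigeonhole on the periods; (b) applying the Hodge index theorem in dimension $\ge 3$ to guarantee that among infinitely many irreducible divisors of bounded "dynamical size" infinitely many are pairwise disjoint, which requires bounding intersection numbers using the SDD hypothesis and the structure of $f^*$; and (c) checking that the resulting contraction is a \emph{morphism} to a curve (not just a rational fibration), which is where one needs the base to be a curve so that the nef class with numerical dimension one is automatically semiample. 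Once the fibration exists, identifying $f|_C$ as $d_1(f)$-polarized and the relative degree as $< d_1(f)$ is routine via the product formula.

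Finally, the "in particular" statement: if $D$ is an $f^{-1}$-periodic prime divisor, then some iterate of $f$ maps $D$ into itself, i.e. $D$ is $f^m$-invariant for suitable $m$; moreover an $f^{-1}$-periodic prime divisor is automatically $f$-periodic when $X$ is projective (or $f$ finite), and its dynamical degree, computed on $D$, is $\le d_1(f|_\pi) < d_1(f)$ once we are in case (2), so it lies in $P_{f^m}$ for the relevant $m$; but $P_{f^m} \subseteq P_{f^{m!}}$-type stabilization together with the finiteness already proved for each $P_{f^n}$ in Theorem \ref{main-thm-pf}, plus the fact that an $f^{-1}$-periodic divisor must be a component of some $f^n$-stable fibre of $\pi$ and the fibres over the finitely many periodic points of the polarized $g = f|_C$ are the only candidates, forces finiteness. (If (1) fails, then $P_{f^\infty}$ is already finite and there is nothing to prove, since every $f^{-1}$-periodic SDD prime divisor lies in $P_{f^\infty}$.)
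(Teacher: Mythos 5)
Your direction $(2)\Rightarrow(1)$ and your reduction of the ``in particular'' statement to the equivalence follow the paper's line. But the core direction $(1)\Rightarrow(2)$ has a genuine gap at exactly the point you flag as delicate: after the Hodge-index step you produce a nef class $L$ of numerical dimension one and then assert that it is ``automatically semiample'' because the target is a curve, so that it induces a morphism. This is false in general---there exist nef divisors with $L^2\equiv 0$, $L\not\equiv 0$ on smooth projective surfaces that are not semiample---and nothing in your argument rules this out (the reasoning is also circular: you need the fibration before you can say the base is a curve). The paper's mechanism is different and is the key input you are missing: Lemma \ref{lem_N_disjoint} uses Proposition \ref{prop_generalization_hodge} (the higher-dimensional Hodge index theorem) to extract more than $\rho(\widetilde X)$ \emph{pairwise disjoint} prime divisors among the strict transforms on a resolution $\widetilde X$ of the elements of $P_f$ (made numerous by iterating $f$), and then invokes the theorem of Bogomolov--Pirutka--Silberstein \cite{BPS16} on families of disjoint divisors: having $\rho(\widetilde X)+1$ pairwise disjoint prime divisors forces the existence of a fibration $\widetilde X\to C$ onto a curve having several of them as full fibres. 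The fibration then descends to $X$ by Lemma \ref{lem_nef_descend} and $f$ descends by the rigidity lemma. It is this disjointness count against the Picard number, not semiampleness of a nef eigenclass, that produces the map to the curve.

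There is a second error in your treatment of the ``in particular'' clause in the case where (1) holds: you conclude finiteness because the candidates lie over ``the finitely many periodic points of the polarized $g=f|_C$.'' A $d_1(f)$-polarized endomorphism of a curve has \emph{infinitely} many periodic points (they are Zariski dense, which is precisely what makes $(2)\Rightarrow(1)$ work), so this gives no bound. The correct argument is that an $f^{-1}$-periodic prime divisor not contained in one of the finitely many reducible fibres maps to a $g^{-1}$-periodic (totally invariant) point of $C$ (cf.\ \cite[Lemma 7.5]{CMZ20}), and it is the set of $g^{-1}$-periodic points that is finite (\cite[Corollary 3.8]{MZ20}); this is how the paper concludes.
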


Corollary \ref{cor:surf} answers Question \ref{Q1} affirmatively  for projective surfaces, 
which is a direct consequence of Theorem \ref{main-thm-pf} and Proposition \ref{prop_fixed_points}.
However, we are currently unable to show the upper bound in Question \ref{Q2} in full generality except cases in Theorem \ref{main-thm-gp}.

\begin{corollary}\label{cor:surf}
$S_f$ is finite for any surjective endomorphism $f$ of a projective surface.
\end{corollary}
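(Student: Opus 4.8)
The plan is to split the set $S_f$ of $f$-invariant maximal periodic SDD subvarieties of a projective surface $X$ according to dimension, and to handle each piece with a result already available. If $V \in S_f$ has $\dim V = 1$, then $V$ is a prime divisor, so $V \in P_f$, and by Theorem \ref{main-thm-pf} (applied with $n=1$, or rather by the finiteness of $P_{f^n}$ underlying that asymptotic bound) there are only finitely many such $V$. If $\dim V = 0$, then $V$ is a single $f$-fixed point; since $V$ is SDD we have $d_1(f|_V) = d_1(\mathrm{id}) = 1 < d_1(f)$, forcing $d_1(f) > 1$, and then Proposition \ref{prop_fixed_points} supplies the finiteness of the relevant set of fixed points. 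The only remaining case, $\dim V = 2$, is $V = X$ itself; but $X$ cannot be SDD for $f$ since $d_1(f^s|_X) = d_1(f^s)$ for every $s$, so $X \notin \Sigma_{f^\infty}$ and this case is vacuous. Combining the three cases, $S_f$ is finite.

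The key steps, in order, are: (i) observe the trichotomy on $\dim V$ for $V \in S_f \subseteq 2^X$ and dispose of $\dim V = 2$ immediately by the definition of SDD; (ii) identify the $1$-dimensional members of $S_f$ with the prime divisors in $S_f$, i.e.\ with $P_f$, and quote the finiteness of $P_f$ from Theorem \ref{main-thm-pf}; (iii) note that a $0$-dimensional member of $S_f$ is an isolated $f$-fixed point forcing $d_1(f) > 1$, and invoke Proposition \ref{prop_fixed_points} to bound the number of such points. One small technical point to be careful about: an $f$-invariant $0$-dimensional subvariety is a reduced point fixed by $f$ (not merely periodic), so Proposition \ref{prop_fixed_points} applies on the nose; and we should confirm that maximality in $\Sigma_{f^\infty}$ does not interfere — it only shrinks $S_f$, so any finiteness statement for the larger sets $P_f$ and for the fixed-point set descends to $S_f$.

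The main obstacle is essentially packaging rather than proof: one must make sure the dimension bookkeeping is airtight, in particular that \emph{every} element of $S_f$ genuinely falls into one of the three dimension classes and that "maximal periodic SDD" does not secretly include $X$ through some degenerate reading of the definition. Since $d_1(f^s|_X) = d_1(f^s)$ always, $X \notin \Sigma_{f^\infty}$, so $\dim V \le 1$ for all $V \in \Sigma_{f^\infty}$ on a surface; this is the one place where the surface hypothesis is used decisively, as it reduces SDD subvarieties to divisors and points, precisely the two regimes covered by Theorem \ref{main-thm-pf} and Proposition \ref{prop_fixed_points} respectively. No new estimate is needed beyond those two inputs.
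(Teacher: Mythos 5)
Your proposal is correct and is essentially the paper's own argument: the paper derives Corollary~\ref{cor:surf} exactly as a direct consequence of Theorem~\ref{main-thm-pf} (handling the one-dimensional elements of $S_f$, which are precisely $P_f$) and Proposition~\ref{prop_fixed_points} (handling the zero-dimensional elements), with the case $V=X$ excluded since $d_1(f^s|_X)=d_1(f^s)$. Your dimension bookkeeping and the observation that finiteness of the larger sets descends to $S_f$ match the intended reasoning.
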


Throughout this paper, we try to make our argument as algebraic as we can so that most of our results can be generalized to positive characteristic.
We leave such generalization to the careful readers which will not be mentioned too much in this paper for the sake of a comfortable reading.

We end the introduction with Theorem~\ref{main-thm-all}, as an application of a technique used when considering algebraic groups (the commutative diagram in Proposition \ref{prop-gen-d1}).
This compound theorem has its own interest and extends the results in \cite[Theorem 1.2 and Theorem 5.1]{CMZ20}, \cite[Proposition 3.3]{MZ19},  \cite[Lemma 2.3]{NZ10} and \cite[Remark 5.9]{San17}
to the most general setting (without assumptions like smooth, characteristic $0$, separable, int-amplified, etc.). Theorem~\ref{main-thm-all} is a simpler version of Theorem~\ref{thm-all}.

\begin{theorem}\label{main-thm-all}(cf.~Theorem \ref{thm-all})
Let $f:X\to X$ be a surjective endomorphism of a normal projective variety over the field $k$ of arbitrary characteristic. Assume $d_1(f)>1$.
Then we have:
\begin{enumerate}

\item 
$f^*D\sim_{\R} d_1(f)D$ for some nef $\R$-Cartier divisor $D$.

\item 
Suppose $f^*B\equiv qB$ for some big divisor and $q>1$. Then $f$ is $q$-polarized.

\item 
Suppose $f$ is $q$-polarized. 
Then any eigenvalue of $f^*|_{H^1_{\textup{\'et}}(X,\Q_\ell)}$ has modulus $\sqrt{q}$.

\item 
Suppose $f$ is $q$-polarized (resp.~int-amplified). Then so is $f|_{\Alb(X)}:\Alb(X)\to \Alb(X)$.
\end{enumerate}
\end{theorem}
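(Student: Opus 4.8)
Set $\lambda:=d_1(f)>1$. For part (1) the plan is to apply the Birkhoff--Perron--Frobenius theorem for cone-preserving operators: $f^*$ acts on the finite-dimensional space $N^1(X)_{\R}$ and preserves the closed, salient, full-dimensional cone $\Nef(X)$, so there is a nonzero $D_0\in\Nef(X)$ with $f^*D_0\equiv\lambda D_0$ (the eigenvalue produced being the spectral radius $\rho(f^*|_{N^1(X)_{\R}})$, which equals $\lambda$). To promote this numerical identity to an $\R$-linear equivalence, pick an $\R$-Cartier representative $D_0$ and note that the defect $N_0:=f^*D_0-\lambda D_0$ is numerically trivial, hence lies in $\Pic^0(X)_{\R}$; there $f^*$ acts through an endomorphism of the abelian variety $\Pic^0_{\red}(X)$, and $\lambda$ is not among its eigenvalues, because those agree with the eigenvalues of $f^*$ on $H^1_{\textup{\'et}}(X,\Q_\ell)$, which by the standard inequality $\rho\bigl(f^*|_{H^1}\bigr)\le\sqrt{d_1(f)}$ have modulus $\le\sqrt\lambda<\lambda$. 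Thus $f^*-\lambda\,\mathrm{id}$ is invertible on $\Pic^0(X)_{\R}$; solving $(f^*-\lambda\,\mathrm{id})P=-N_0$ and taking $D:=D_0+P$ finishes part (1).

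For part (2) I would first observe that $B$ big means $B$ lies in the interior of the pseudo-effective cone, which is $f^*$-invariant; since an eigenvector in the interior of an invariant cone has eigenvalue equal to the spectral radius, $q=\rho(f^*|_{N^1(X)_{\R}})=\lambda$. Comparing $\vol((f^n)^*B)=(\deg f)^n\vol(B)$ with $\vol((f^n)^*B)=\vol(q^nB)=q^{n\dim X}\vol(B)$ then gives $\deg f=q^{\dim X}$. Next, write $B\equiv_{\R}H+E$ with $H$ ample and $E$ effective (Kodaira's lemma); the ample classes $q^{-n}(f^n)^*H$ are trapped in the compact slice $\{x:0\preceq x\preceq cB\}$ of the pseudo-effective cone (as $H\preceq cB$ for $c\gg 0$), so a Ces\`aro average $L:=\lim_k\frac1{N_k}\sum_{n<N_k}q^{-n}(f^n)^*H$ exists along a subsequence, is nef, and satisfies $f^*L\equiv qL$; a Brunn--Minkowski estimate together with $\deg f=q^{\dim X}$ shows $\vol(L)\ge\vol(H)>0$, so $L$ is nef and big. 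It remains to prove that $L$ is ample, whence $f$ is $q$-polarized --- and this is the technical heart, which I expect to be the main obstacle. Here one studies the augmented base locus $\mathbf B_+(L)$, which is totally $f$-invariant (because $f^{-1}\mathbf B_+(L)=\mathbf B_+(f^*L)=\mathbf B_+(qL)=\mathbf B_+(L)$) and, by Nakamaye's theorem (available in arbitrary characteristic after Birkar), equals the null locus $\bigcup\{V:L^{\dim V}\cdot V=0\}$; a descending induction on $\dim X$, restricting the eigen-equation to the invariant components of $\mathbf B_+(L)$ and using the product formula for dynamical degrees there, must then be made to contradict $\mathbf B_+(L)\neq\varnothing$. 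In the singular, arbitrary-characteristic setting there is no ``nef and big $\Rightarrow$ ample'' shortcut, so this step has to be carried out by hand.

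For part (4), the induced map $g:=f|_{\Alb(X)}$ exists by functoriality of the Albanese and is surjective (its image is a translate of an abelian subvariety containing the generating set $\alb(X)$), and $\alb^*\colon N^1(\Alb(X))_{\R}\hookrightarrow N^1(X)_{\R}$ is injective and conjugates $g^*$ to the restriction of $f^*$; hence the eigenvalues of $g^*$ form a sub-multiset of those of $f^*$. If $f$ is int-amplified, all eigenvalues of $f^*$ on $N^1(X)_{\R}$ have modulus $>1$ (the known spectral characterization of int-amplified endomorphisms), so the same holds for $g^*$ and $g$ is int-amplified. If $f$ is $q$-polarized then $f^*H-H\equiv(q-1)H$ is ample, so $f$ is int-amplified and $g$ is an int-amplified isogeny; moreover $\alb$ is surjective for polarized $f$, and feeding $\deg f=q^{\dim X}$, the bound (on a general fibre) $d_1\le d_1(f)=q$, and log-concavity of dynamical degrees into the product formula forces $\deg g=q^{\dim\Alb(X)}$ and $d_1(g)=q$. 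As in part (2) this yields a nef and big $D_A$ on $\Alb(X)$ with $g^*D_A\equiv qD_A$; but on an abelian variety a nondegenerate nef class is automatically ample (its index is $0$), so $D_A$ is ample and $g$ is $q$-polarized.

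Finally, part (3) reduces to the Albanese: by part (4), $g:=f|_{A}$ is $q$-polarized, where $A:=\Alb(X)$, and $\alb$ induces an isomorphism $H^1_{\textup{\'et}}(A,\Q_\ell)\xrightarrow{\ \sim\ }H^1_{\textup{\'et}}(X,\Q_\ell)$ intertwining $g^*$ and $f^*$ (both sides are canonically $V_\ell\bigl(\Pic^0_{\red}(X)\bigr)(-1)$, and $\alb^*$ is an isomorphism on $\Pic^0$ by the universal property). So it suffices to treat $g$ on the abelian variety $A$. Choosing an ample $L_0$ with $g^*L_0\equiv qL_0$ and letting $\dagger$ be the associated Rosati involution, the equation $g^*L_0\equiv qL_0$ translates into $g^\dagger g=q$ in $\End^0(A)$; then the classical positivity argument applies (Mumford, \S21): $\Q[g]=\Q[g,g^\dagger]$ is a product of number fields, the positive involution $\dagger$ stabilizes each factor and makes it either totally real with trivial involution or a CM field with complex conjugation, and $g^\dagger g=q$ then forces every conjugate of $g$ --- equivalently, every eigenvalue of $g$ on $V_\ell A$, hence of $f^*$ on $H^1_{\textup{\'et}}(X,\Q_\ell)$ --- to have absolute value $\sqrt q$.
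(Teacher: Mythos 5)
Your parts (1), (3) and (4) track the paper's strategy (Perron--Frobenius on the nef cone, then control of the eigenvalues of $f^{*}$ on $\Pic^{\circ}(X)_{\red}$ by reduction to the Albanese), and your part (3) via the Rosati involution and Mumford's positivity argument is a perfectly good, characteristic-free alternative to the paper's use of Hu's identity $\chi_{1}(g)^{2}=d_{1}(g)$ on abelian varieties. But two steps are genuinely incomplete. First, in part (1) you invoke ``the standard inequality $\rho(f^{*}|_{H^{1}})\le\sqrt{d_{1}(f)}$'' for an arbitrary surjective endomorphism of a normal projective variety in arbitrary characteristic. That is not standard in this generality: it is exactly the paper's Proposition \ref{prop-sqrt}, and proving it requires transporting the eigenvalue bound from $X$ to $\Alb(X)$, which the paper does via the surjection $s_{n}\colon X^{\times(2n)}\to\Alb(X)$, $s_{n}(x_{1},\dots,x_{2n})=\sum_{i\le n}\alb_{X}(x_{i})-\sum_{j>n}\alb_{X}(x_{j})$ (surjective for $n\gg1$ because $\alb_{X}(X)$ generates $\Alb(X)$), combined with Hu's characteristic-free result on abelian varieties. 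Your appeal to $\alb_{X}$ alone does not suffice, since $\alb_{X}$ need not be surjective for general $f$ and $\alb_{X}^{*}$ need not be injective on $\N^1$; the same issue resurfaces in your int-amplified case of (4), where you assert injectivity of $\alb^{*}$ without justification. The $s_{n}$-diagram is the device that makes all of (1), (3), (4) go through uniformly.

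Second, and more seriously, part (2) is not proved. You construct a nef and big class $L$ with $f^{*}L\equiv qL$ and then explicitly defer the ampleness of $L$ to a ``descending induction'' on $\mathbf{B}_{+}(L)$ that you describe as the main obstacle and do not carry out; as stated, the argument does not establish $q$-polarizedness. The paper bypasses this entirely: from $f^{*}B\equiv qB$ with $B$ big it quotes the norm/degree criterion of \cite[Propositions 1.1 and 2.9]{MZ18b} to get directly that $q$ is an integer and $f^{*}H\equiv qH$ for some \emph{ample} Cartier divisor $H$, and then applies part (1) to upgrade $\equiv$ to $\sim_{\Q}$. If you want to avoid citing that criterion you must actually complete the $\mathbf{B}_{+}$ induction (or an equivalent ampleness argument), and note that Nakamaye's theorem for singular varieties in positive characteristic is itself delicate. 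A related circularity to watch: your polarized case of (4) begins ``as in part (2)''; it survives only because on an abelian variety nef and big already implies ample, so the missing step is not needed there.
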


In Remark \ref{rmk-swz}, the method of counting fixed points for polarized endomorphisms only holds in characteristic 0, because it applied Serre's theorem to compute the eigenvalues of $f^*|_{H^i(X,\mathbb{Z})}$ which heavily relied on a K\"ahler form.
Serre's theorem is also a K\"ahler analogue of Weil’s conjecture on eigenvalues of the pullback of (the purely inseparable $q$-polarized) Frobenius endomorphism $F$ on cohomology.
It was proved by Deligne's celebrated theorem: all eigenvalues of $F^*|_{H^i_\textup{\'et}(X,\Q_\ell)}$ have modulus $q^{i/2}$ (cf.~\cite[Th\'eor\`eme 1.6]{Del74}).
By our Theorem \ref{main-thm-all}\,(3), this holds for every polarized endomorphism when $i=1$.

\begin{ack}
The authors are supported by a JSPS Overseas Research Fellowship, a Research Fellowship of KIAS, a Research Fellowship of NUS,  an ARF of NUS and a President's Scholarship of NUS, respectively. 
The authors would like to thank Fei Hu, Joseph H. Silverman and Junyi Xie for many inspiring discussions and suggestions.
The authors would also like to thank the referee for many suggestions to improve the paper.
\end{ack}

\section{Preliminaries}\label{section_preliminary}

\begin{notation}\label{not-2.1}
We use the following notation throughout this paper.

\begin{itemize}
\item Let $X$ be a projective variety. Denote by $\textup{NS}(X):=\Pic(X)/\Pic^\circ(X)$ the usual N\'eron-Severi group (in the sense of Cartier divisors).
Let $\textup{N}^1(X):=\textup{NS}(X)\otimes_\mathbb{Z}\mathbb{R}$,  
and $\textup{PE}^1(X)$  the closed cone of pseudo-effective classes in $\textup{N}^1(X)$. 

\item An $r$-cycle $C$ on a projective variety $X$ is {\it pseudo-effective} if $C \cdot H_1 \cdots H_r \ge 0$ for all ample divisors $H_i$. 
Two $r$-cycles $C_1$ and $C_2$ are said to be {\it weakly numerically equivalent}  if
$(C_1 - C_2) \cdot L_1 \cdots L_r= 0$ for all
Cartier divisors $L_i$; cf.~\cite[Section 2]{MZ18b} and the references therein for more information.

\item The symbols $\sim$ (resp.\,$\sim_{\mathbb Q}$,\,$\sim_{\mathbb R}$) and
$\equiv$ (resp.\,$\equiv_w$) denote
the linear equivalence (resp.\,$\mathbb Q$-linear equivalence,\,$\mathbb R$-linear equivalence) and the numerical equivalence (resp.\,weak numerical equivalence) on divisors.

\item A surjective endomorphism $f:X\to X$ of a projective variety is said to be {\it $q$-polarized} if $f^*H\sim qH$ for some ample Cartier divisor $H$ and integer $q>1$.
The unique endomorphism on a point is $q$-polarized for any $q>1$ by convention.
\item A surjective endomorphism $f:X\to X$ of a projective variety is said to be {\it int-amplified} if $f^*H-H=L$ for some ample Cartier divisors $H$ and $L$.

\item Let $X$ be a normal projective variety. Denote by $\textup{Cl}(X)$  the Weil divisor class group on $X$, and by $\textup{Cl}^\circ(X)\subseteq \textup{Cl}(X)$ the subgroup of divisors algebraically equivalent to $0$, which is a birational invariant (cf.~\cite[Lemma 18]{Kol18}). 
The quotient $$\textup{Cl}^{\textup{ns}}(X):=\textup{Cl}(X)/\textup{Cl}^\circ(X)$$ is called the \textit{N\'eron-Severi} class group of $X$ (in the sense of Weil divisors).

There are natural inclusions $\textup{Pic}^\circ(X)\subseteq \textup{Cl}^\circ(X)$ and $\textup{NS}(X)\subseteq\textup{Cl}^{\textup{ns}}(X)$. Both of these  are equalities when $X$ is smooth. 

\item Denote by 
$$\rho(X):=\dim_{\R} \textup{NS}(X) \otimes_{\Z} \R$$ 
the usual {\it Picard number} of $X$.
Denote by
$$\rho^{\textup{ns}}(X):=\dim_{\R} \textup{Cl}^\textup{ns}(X) \otimes_{\Z} \R$$
which is finite (cf.~\cite[Theorem 17]{Kol18}).
We refer readers to \cite[Section 3]{Kol18} and \cite[Chapter 10]{Ful98} for more information.

\item Let $f:X\dashrightarrow X$ and $g:Y\dashrightarrow Y$ be dominant self-maps of projective varieties.
Let $\pi:X\dashrightarrow Y$ be a dominant rational map such that $g\circ\pi=\pi\circ f$.
We recall the following definitions and facts about the first dynamical degree.
\begin{enumerate}

\item 
The \textit{(first) dynamical degree} of $f$ is defined by
$$d_1(f):= \lim_{n \to \infty} ((f^n)^*H \cdot H^{\dim (X) -1})^{1/n}$$
where $H$ is a nef and big Cartier divisor of $X$.

\item 
The \textit{relative (first) dynamical degree} of $f$ is defined by
$$d_1(f|_{\pi})= \lim_{n \to \infty} ((f^n)^*H_X \cdot (\pi^*H_Y)^{\dim (Y)}\cdot H_X^{\dim (X)-\dim(Y) -1})^{1/n}$$
where $H_X$ and $H_Y$ are respectively nef and big Cartier divisors of $X$ and $Y$.

\item Product formula: $d_1(f)=\max\{d_1(f|_{\pi}), d_1(g)\}$.

\item $d_1(f^n)=d_1(f)^n$ for any $n>0$.

\item $d_1(f)=\rho(f^*|_{\N^1(X)})$, the spectral radius of $f^*|_{\N^1(X)}$, if $f$ is a self-morphism.
\end{enumerate}
(Relative) dynamical degrees can also be defined, 
by the above formulas, for (not necessarily projective) varieties over the field $k$ (allowed to be of arbitrary characteristic) by taking any birational projective models since they are birational invariants and independent of the choices of nef and big divisors above; indeed, $d_1(f)$ is invariant under generically finite map; see \cite{DS05} and \cite{Dan19} for details.

\item Let $f:X\to X$ be a surjective endomorphism of a smooth projective variety.
Denote by $$\chi_i(f):=\rho(f^*|_{H_{\textup{\'et}}^i(X,\mathbb{Q}_{\ell})})$$ 
where $\rho$ is the spectral radius, $H_{\textup{\'et}}^i(X,\mathbb{Q}_{\ell})$ is the $\ell$-adic cohomology, and $\ell$ is a prime different from the characteristic of the ground field $k$. When $X$ is an abelian variety, a characteristic free proof by Hu (cf.~\cite[Theorem 1.2 and Formula (3.6)]{Hu19}) showed that $$\chi_{2}(f)=d_1(f)=\chi_{1}(f)^2.$$

\item A normal (not necessarily projective) variety $X$ is a \textit{toric variety} if $X$ contains an algebraic torus $T=(k^*)^n$ as an (affine) open dense subset such that the natural multiplication action of $T$ on itself extends to an action on the whole variety. 

\item A commutative algebraic group $S$ is {\it semi-abelian} if $S$ is an extension of an abelian variety $A$ by an algebraic torus $T$.

\item Let $f:X\to X$ be a dominant self-morphism of a variety, which is not necessarily projective.  
A closed subvariety $Y\subseteq X$ is called \textit{$f$-invariant} (resp.\,\textit{$f$-periodic}) if $\overline{f(Y)}=Y$ (resp.\,$\overline{f^{s}(Y)}=Y$ for some $s>0$).
An $f$-periodic closed subvariety $V$ is said to be of \textit{Small Dynamical Degree} 
({\it SDD} for short) if the first dynamical degree $d_1(f^s|_V)<d_1(f^s)$ for some $s>0$ such that $\overline{f^s(V)}=V$.

\item Denote by $\Sigma_{f^{\infty}}$ the set of $f$-periodic (irreducible closed) subvarieties of Small Dynamical Degree ({\it SDD} for short).
We define the following subsets of $\Sigma_{f^{\infty}}$:

\begin{longtable}{p{1cm} p{12cm}}
$S_{f^{\infty}}$ & the subset of maximal $f$-periodic subvarieties in $\Sigma_{f^{\infty}}$,\\
$S_f$    &the subset of $S_{f^{\infty}}$,
consisting of all $f$-invariant elements,\\
$P_f$ &the subset of $S_{f^{\infty}}$,
consisting of all $f$-invariant prime divisors,\\
$P_{f^{\infty}}$ &the subset of (maximal) $f$-periodic prime divisors in $\Sigma_{f^{\infty}}$,\\
$P^{-1}_{f^{\infty}}$ & the subset of (maximal) $f^{-1}$-periodic prime divisors in $\Sigma_{f^{\infty}}$.
\end{longtable}
\item An element $D\in S_f$ is called an \textit{$f$-invariant maximal $f$-periodic SDD subvariety}.
 \item Denote by $\Fix(f)$ (resp.\,$\Per(f)$) the set of fixed (resp.\,periodic) points of $f$.
\end{itemize}
\end{notation}

The following lemma is basic, and is used in the proof of Proposition \ref{lem_reduction_nef}.

\begin{lemma}\label{lem_nef_descend}
Let $\sigma:\widetilde{X}\to X$ be a birational morphism between normal projective varieties.
Let $L$ be a nef $\mathbb{R}$-Cartier divisor on $X$ and denote by $\widetilde{L}:=\sigma^*L$. 
Let $\widetilde{\pi}:\widetilde{X}\to Y$ be a morphism to another projective variety $Y$ such that  $\widetilde{L}\equiv \widetilde{\pi}^*H$ with $H$ an ample divisor on $Y$. Then $\widetilde{\pi}$ factors through a well-defined morphism $\pi:X\to Y$ with $L\equiv \pi^*H$. 
\end{lemma}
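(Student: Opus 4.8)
The plan is to produce the factorization by descending the Stein-factorization data of $\widetilde\pi$ along the birational contraction $\sigma$, using that $\widetilde L = \sigma^*L$ is trivial on all $\sigma$-exceptional curves. First I would reduce to the case where $\widetilde\pi$ has connected fibres and $Y$ is normal: replace $\widetilde\pi$ by its Stein factorization $\widetilde X \to Y' \to Y$; since $H$ is ample on $Y$, its pullback to $Y'$ is ample, so it suffices to factor the morphism $\widetilde X \to Y'$ through $X$, and then compose with $Y'\to Y$. So from now on assume $\widetilde\pi_*\mathcal O_{\widetilde X} = \mathcal O_Y$.

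Next I would show that $\widetilde\pi$ contracts every curve contracted by $\sigma$. Let $E\subseteq \widetilde X$ be an irreducible curve with $\sigma(E)$ a point. Then $\widetilde L\cdot E = \sigma^*L\cdot E = L\cdot \sigma_*E = 0$ by the projection formula. Since $\widetilde L \equiv \widetilde\pi^*H$ we get $\widetilde\pi^*H\cdot E = 0$, and as $H$ is ample this forces $\widetilde\pi(E)$ to be a point. Now the rigidity lemma (in the form: if $\sigma:\widetilde X\to X$ is proper with $\sigma_*\mathcal O_{\widetilde X}=\mathcal O_X$ — which holds because $\sigma$ is birational between normal varieties — and $\widetilde\pi:\widetilde X\to Y$ contracts every fibre of $\sigma$ to a point, then $\widetilde\pi$ factors uniquely through $\sigma$) yields a morphism $\pi:X\to Y$ with $\pi\circ\sigma = \widetilde\pi$. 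Concretely this $\pi$ can be built on closed points by $\pi(x) = \widetilde\pi(\sigma^{-1}(x))$ and checked to be a morphism using normality of $X$ and properness of $\sigma$; alternatively, quote \cite[Lemma 1.15]{Debarre} or the analogous statement in Koll\'ar–Mori. Uniqueness of $\pi$ follows since $\sigma$ is dominant.

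Finally I would verify $L\equiv \pi^*H$. We have $\sigma^*(\pi^*H) = \widetilde\pi^*H \equiv \widetilde L = \sigma^*L$, hence $\sigma^*(L - \pi^*H)\equiv 0$ on $\widetilde X$. For any curve $C\subseteq X$ choose a curve $\widetilde C\subseteq \widetilde X$ with $\sigma_*\widetilde C = (\deg)\,C$ (e.g.\ the strict transform); the projection formula gives $(L-\pi^*H)\cdot C = \tfrac{1}{\deg}\,\sigma^*(L-\pi^*H)\cdot \widetilde C = 0$, so $L\equiv \pi^*H$ as claimed. The only genuine subtlety is the descent step, i.e.\ justifying the rigidity lemma in the stated generality (normal, not necessarily smooth, arbitrary characteristic); everything else is the projection formula and ampleness. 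Since $\sigma$ is a birational \emph{morphism} with $\sigma_*\mathcal O_{\widetilde X}=\mathcal O_X$ and proper fibres, this is exactly the setting where rigidity applies, so I expect no essential difficulty — this is why the paper can call the lemma ``basic.''
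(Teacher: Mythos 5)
Your argument is correct and follows essentially the same route as the paper: the projection formula plus ampleness of $H$ shows that $\widetilde{\pi}$ contracts every $\sigma$-fibre, the rigidity lemma \cite[Lemma 1.15]{Deb01} then produces $\pi$, and the projection formula gives $L\equiv\pi^*H$ (the paper phrases this last step more compactly as $\pi^*H=\sigma_*\sigma^*\pi^*H\equiv\sigma_*\widetilde{L}=L$). Your preliminary Stein factorization of $\widetilde{\pi}$ is superfluous, since the rigidity lemma only requires $\sigma_*\mathcal{O}_{\widetilde{X}}=\mathcal{O}_X$, which already holds because $\sigma$ is a birational morphism between normal projective varieties.
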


\begin{proof}
By Zariski's main theorem, $\sigma_*\mathcal{O}_{\widetilde{X}}=\mathcal{O}_X$. For every curve $C$ lying in a fibre of $\sigma$, it follows from the projection formula that 
$H\cdot \widetilde{\pi}_*C=\widetilde{\pi}^*H\cdot C=\sigma^*L\cdot C=0$.
Since $H$ is ample on $Y$, $\widetilde{\pi}(C)$ is a point and hence every fibre of $\sigma$ will also be contracted by $\widetilde{\pi}$. By the rigidity lemma (cf.~\cite[Lemma 1.15]{Deb01}), $\widetilde{\pi}$ factors through a well-defined morphism $\pi:X\to Y$. So $\pi^*H=\sigma_*\sigma^*\pi^*H\equiv\sigma_*\widetilde{L}= L$. 
\end{proof}

Given an $f$-equivariant surjective morphism $\pi:X\to Y$ with $g:=f|_Y$, it is natural for us to compare those $\sharp S_f$, $\sharp P^{-1}_{f^{\infty}}$, $\sharp P_f$ on $f$ with the corresponding ones on $g$.

When $\pi$ is finite, we formulate the following lemma to bound $\sharp S_g$ in terms of $\sharp S_{f^{d!}}$ with $d=\deg \pi$. It is applied to prove the finiteness of $\sharp S_g$ for the $Q$-abelian case in Corollary \ref{Q_abelian_case}. Our upper bound for $\sharp S_g$ in the proof of Lemma \ref{lem_invariant_finite_iff} is  not optimal at all. 

\begin{lemma}\label{lem_invariant_finite_iff}
Let $\pi:X\to Y$ be a finite surjective morphism of  degree $d$ between projective varieties. Let $f:X\to X$ and $g:Y\to Y$ be two surjective endomorphisms such that $\pi\circ f=g\circ \pi$. If $\sharp S_{f^{d!}}<\infty$, then $\sharp S_g<\infty$. Conversely, if $\sharp S_g<\infty$, then $\sharp S_{f}<\infty$.
\end{lemma}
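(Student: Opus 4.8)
The plan is to transport maximal periodic SDD subvarieties across the finite map $\pi$ in both directions, using that $\pi$ is finite surjective so dynamical degrees are preserved ($d_1(f|_V)=d_1(g|_{\overline{\pi(V)}})$ when $\pi|_V$ is finite onto its image, and likewise $d_1(f)=d_1(g)$ since $\pi$ is generically finite). The two directions are genuinely different: pushing forward is easy, pulling back requires controlling how irreducible components of $\pi^{-1}(W)$ get permuted by $f$, which is where the factorial $d!$ enters.

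\emph{The direction $\sharp S_{f^{d!}}<\infty \Rightarrow \sharp S_g<\infty$.} Take $W\in S_g$, say $g^s(W)=W$. Consider $\pi^{-1}(W)$; it has at most $d$ irreducible components (since $\pi$ has degree $d$, each fibre has at most $d$ points, so $W$ — being irreducible — has at most $d$ components in its preimage, by looking at a general point of $W$). The endomorphism $f^s$ permutes these components, and since there are at most $d$ of them, $f^{s\cdot d!}$ fixes each of them; more uniformly, $f^{d!}$ raised to the appropriate power does. Let $V$ be one such component with $f^{s d!}(V)=V$ (after replacing $s$ by $s\,d!$ if needed, or absorbing into the exponent). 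Then $\pi|_V:V\to W$ is finite surjective, so $d_1(f^{sd!}|_V)=d_1(g^{sd!}|_W)<d_1(g^{sd!})=d_1(f^{sd!})$, hence $V\in\Sigma_{f^\infty}$. Enlarge $V$ to a maximal element $V'\in S_{f^\infty}$ containing it; after replacing by a further iterate we may assume $V'$ is $f^{d!}$-invariant, so $V'\in S_{f^{d!}}$ (one must check maximality is not lost — this uses that $\pi$ maps SDD subvarieties to SDD subvarieties, so $\overline{\pi(V')}$ contains $W$ and lies in $\Sigma_{g^\infty}$, forcing $\overline{\pi(V')}=W$ by maximality of $W$, hence $V'\subseteq\pi^{-1}(W)$ and $V'=V$). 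Thus $W=\overline{\pi(V)}$ with $V\in S_{f^{d!}}$; since $\pi$ is a fixed map and $S_{f^{d!}}$ is finite, only finitely many such $W$ arise, giving $\sharp S_g<\infty$.

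\emph{The direction $\sharp S_g<\infty \Rightarrow \sharp S_f<\infty$.} Take $V\in S_f$, so $f(V)=V$. Then $W:=\overline{\pi(V)}$ is irreducible, $g$-invariant, and $\pi|_V:V\to W$ is finite surjective, hence $d_1(f|_V)=d_1(g|_W)<d_1(g)=d_1(f)$, so $W\in\Sigma_{g^\infty}$; enlarge to a maximal $W'\supseteq W$, $W'\in S_{g^\infty}$, and after an iterate $W'$ is $g$-invariant, so $W'\in S_g$. Now $V\subseteq\pi^{-1}(W')$, which has at most $d$ irreducible components, each an SDD subvariety of $f$ (pull back along the finite map $\pi|$ over $W'$: components of $\pi^{-1}(W')$ dominating $W'$ have the same dynamical degree as $W'$). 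So $V$ is one of $\le d$ components of $\pi^{-1}(W')$; by maximality of $V$ in $\Sigma_{f^\infty}$, $V$ must actually be a maximal one among these. Since $S_g$ is finite and each $W'\in S_g$ contributes at most $d$ candidates for $V$, we get $\sharp S_f\le d\cdot\sharp S_g<\infty$.

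\emph{Main obstacle.} The delicate point is the bookkeeping around maximality: one must verify that a maximal periodic SDD subvariety downstairs lifts to (a component of its preimage that is) a maximal periodic SDD subvariety upstairs, and conversely — i.e. that $\pi$ and the preimage operation respect the partial order ``$\subseteq$'' well enough that maximal elements correspond. This rests on the two facts that (a) $\pi|_V$ finite surjective forces equality $d_1(f^s|_V)=d_1(g^s|_{\overline{\pi(V)}})$ and $d_1(f^s)=d_1(g^s)$, so the SDD condition passes both ways, and (b) if $V\subsetneq \widetilde V$ with $\widetilde V\in\Sigma_{f^\infty}$ then $\overline{\pi(V)}\subseteq\overline{\pi(\widetilde V)}$ with the latter in $\Sigma_{g^\infty}$, contradicting maximality of $\overline{\pi(V)}$ unless the images coincide, which pins $\widetilde V$ inside $\pi^{-1}(\overline{\pi(V)})$ and hence (by irreducibility and the component count) forces $\widetilde V=V$. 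The factorial $d!$ is only needed to make a single iterate $f^{d!}$ stabilize every component of every relevant preimage simultaneously, regardless of how the $\le d$ components are permuted. As the authors remark, this bound $\sharp S_f\le d\cdot\sharp S_g$ (and the implicit bound in the other direction) is far from optimal, but finiteness is all that is required.
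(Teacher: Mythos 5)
Your overall strategy --- transporting maximal periodic SDD subvarieties across $\pi$ in both directions, using that $\pi$ is finite so first dynamical degrees are preserved --- is exactly the paper's, whose entire proof is the assertion that $\pi$ induces a surjection $S_{f^{\infty}}\to S_{g^{\infty}}$ and that $\pi^{-1}$ induces a one-to-many surjection the other way. Your first direction, including the verification that the chosen $f^{d!}$-invariant component $V$ of $\pi^{-1}(W)$ is itself maximal (because $\overline{\pi(V')}=W$ pins $V'$ inside $\pi^{-1}(W)$ and forces $V'=V$), is correct and considerably more careful than the paper's two-line version.

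The second direction, however, contains a genuine gap. Starting from $V\in S_f$ you set $W:=\overline{\pi(V)}$, enlarge it to a maximal $W'\in S_{g^{\infty}}$, and assert that ``after an iterate $W'$ is $g$-invariant, so $W'\in S_g$.'' This does not work: $S_g$ consists of elements of $S_{g^{\infty}}$ invariant under $g$ itself, and passing to an iterate only lands you in $S_{g^s}$, about which the hypothesis says nothing. A maximal periodic SDD subvariety containing the $g$-invariant $W$ need not be $g$-invariant --- a priori $g$ merely permutes the maximal elements lying above $W$ --- so the finiteness of $S_g$ does not bound the number of possible $W'$, and your count $\sharp S_f\le d\cdot\sharp S_g$ is not yet justified. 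The repair is to not enlarge at all, but to show that $W=\overline{\pi(V)}$ is already maximal in $\Sigma_{g^{\infty}}$. Indeed, if $W\subsetneq Z$ with $Z\in\Sigma_{g^{\infty}}$ and $g^s(Z)=Z$, then $V\subseteq\pi^{-1}(Z)$; the components of $\pi^{-1}(Z)$ dominating $Z$ are mapped among themselves by $f^s$, so a suitable iterate $\overline{f^{sk}}$ of the one containing $V$ is $f$-periodic, still contains $V=\overline{f^{sk}(V)}$, has $d_1$ equal to that of $Z$ (hence lies in $\Sigma_{f^{\infty}}$), and has dimension $\dim Z>\dim V$ --- contradicting the maximality of $V$. (That $V$ lies on a component \emph{dominating} $Z$ uses going-down and is automatic when $Y$ is normal; the paper glosses over this point as well, and the same caveat applies to your ``at most $d$ components'' counts.) With $W$ maximal and $g$-invariant one gets $W\in S_g$, and $V$ is one of the at most $d$ dominating components of $\pi^{-1}(W)$, which yields the bound you intended.
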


\begin{proof}
Since $\pi: X \to Y$ is finite and $d_1(f)$ is invariant under finite map, $\pi$ induces a surjective map $\pi: S_{f^{\infty}} \to S_{g^{\infty}}$
while $\pi^{-1}$ induces a one-to-many surjective map $\pi^{-1}: S_{g^{\infty}} \to S_{f^{\infty}}$.
Then the lemma follows.
\end{proof}

When $\pi$ is a birational morphism, we can only consider prime divisors rather than higher codimensional subvarieties due to the complexity of  periodic subvarieties contained in the exceptional locus of $\pi$.

\begin{lemma}\label{bir_invariant_case}
Let $\pi: X\dashrightarrow Y$ be a birational map between projective varieties. 
Let $f:X\to X$ and $g:Y\to Y$ be two surjective endomorphisms such that $\pi\circ f=g\circ \pi$. 
Then $ \sharp P_{f^n}=\sharp P_{g^n} +O(1)$ as $n\to \infty$, and $\sharp P_{f^\infty}$ is finite if and only if so is $\sharp P_{g^\infty}$.
\end{lemma}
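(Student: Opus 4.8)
The plan is to compare the prime divisors of $X$ and of $Y$ directly via strict transform along $\pi$, after first putting the set $P_{h^n}$ of a surjective endomorphism $h$ of a projective variety into a form that transparently survives birational conjugation. The key observation here is that a prime divisor $D$ lying in $\Sigma_{h^\infty}$ is automatically \emph{maximal} there: the only irreducible closed subvariety strictly containing $D$ is the ambient variety, which is never SDD since it has the same dynamical degree as $h$. Together with the equality $\Sigma_{(h^n)^\infty}=\Sigma_{h^\infty}$ and the fact that the SDD property of an $h$-periodic subvariety depends only on its dynamical degree and not on the chosen period (using $d_1(\psi^k)=d_1(\psi)^k$), this identifies
$$P_{h^n}=\{\, D\subseteq X \text{ a prime divisor}:\ \overline{h^n(D)}=D,\ d_1(h^n|_D)<d_1(h^n)\,\},$$
with $P_{h^\infty}=\bigcup_{n\ge 1}P_{h^n}$ the set of $h$-periodic SDD prime divisors; here $h^n|_D$ is a genuine surjective endomorphism of the projective variety $D$ because $h$ is proper, so $h^n(D)$ is closed. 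Finally, since $f$ and $g$ are conjugate by the birational map $\pi$ and the (relative) dynamical degree is a birational invariant, $d_1(f)=d_1(g)$ and $d_1(f^n)=d_1(g^n)$.

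Next I would use that $\pi$ is birational: there are dense open subsets $U\subseteq X$ and $V\subseteq Y$ with $\pi|_U:U\to V$ an isomorphism. The prime divisors of $X$ contained in the proper closed set $X\setminus U$ are precisely its codimension-one irreducible components, hence form a finite set $E_X$ depending only on $\pi$; symmetrically one gets a finite set $E_Y$ of prime divisors of $Y$. Strict transform then gives a bijection $\Phi$ from the prime divisors of $X$ not in $E_X$ to those of $Y$ not in $E_Y$, with inverse induced by $\pi^{-1}$: if $D\cap U\ne\emptyset$ then $D\cap U$ is dense in $D$, and $\Phi(D)$ is the closure in $Y$ of $\pi(D\cap U)$.

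The heart of the argument is to check that $\Phi$ carries $P_{f^n}\setminus E_X$ bijectively onto $P_{g^n}\setminus E_Y$ for every $n$. Fix such a $D$ with $D'=\Phi(D)$; its generic point $\eta_D$ lies in $U$ and $\pi(\eta_D)=\eta_{D'}$. If $\overline{f^n(D)}=D$ then $f^n(\eta_D)=\eta_D\in U$, so evaluating the equality of rational maps $\pi\circ f^n=g^n\circ\pi$ at $\eta_D$ (where both sides are defined, and $Y$ separated) gives $g^n(\eta_{D'})=\eta_{D'}$, i.e.\ $\overline{g^n(D')}=D'$; the converse follows by the same computation with $\pi^{-1}$. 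Moreover $\pi$ restricts to a birational map $D\dashrightarrow D'$ which on a suitable dense open of $D$ intertwines $f^n|_D$ with $g^n|_{D'}$, so $d_1(f^n|_D)=d_1(g^n|_{D'})$ by birational invariance of the dynamical degree; combined with $d_1(f^n)=d_1(g^n)$, the SDD inequality holds for $D$ if and only if it holds for $D'$. Hence $D\in P_{f^n}$ if and only if $D'\in P_{g^n}$; running this over all $n$, $\Phi$ also matches $P_{f^\infty}\setminus E_X$ with $P_{g^\infty}\setminus E_Y$.

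Finally I would assemble the conclusion. Since $P_{f^n}\setminus E_X$ and $P_{g^n}\setminus E_Y$ are in bijection, $\sharp P_{f^n}$ is finite if and only if $\sharp P_{g^n}$ is, and in that case
$$\bigl|\,\sharp P_{f^n}-\sharp P_{g^n}\,\bigr|=\bigl|\,\sharp(P_{f^n}\cap E_X)-\sharp(P_{g^n}\cap E_Y)\,\bigr|\ \le\ \sharp E_X+\sharp E_Y,$$
a constant independent of $n$, which yields $\sharp P_{f^n}=\sharp P_{g^n}+O(1)$ as $n\to\infty$. Likewise $P_{f^\infty}$ is finite if and only if $P_{f^\infty}\setminus E_X$ is, if and only if $P_{g^\infty}\setminus E_Y$ is, if and only if $P_{g^\infty}$ is. The step I expect to require the most care is the verification that $\Phi$ respects $f^n$-invariance and the dynamical-degree inequality: it relies on working at generic points, on the separatedness of $Y$, and on the facts recorded in the preliminaries that $f,g$ are morphisms (so $f^n(D)$ is closed and $f^n|_D$ an honest endomorphism) and that dynamical degrees are birational invariants. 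Beyond this, the proof is bookkeeping with the fixed finite exceptional sets $E_X$ and $E_Y$.
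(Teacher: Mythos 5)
Your proof is correct and takes essentially the same route as the paper's: both set up a strict-transform bijection between the periodic SDD prime divisors of $X$ and of $Y$ away from finite exceptional sets of prime divisors, and then compare cardinalities up to that bounded error. The only cosmetic difference is that the paper first reduces to a birational morphism by passing to the graph of $\pi$, whereas you argue directly at generic points; your write-up also makes explicit the verification (preservation of invariance and equality of dynamical degrees under strict transform, and the automatic maximality of SDD prime divisors) that the paper leaves implicit.
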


\begin{proof}
By taking the graph $\Gamma\subseteq X\times Y$, our $f$ lifts to a surjective endomorphism $(f\times g)|_{\Gamma}$.
So it suffices for us to consider the case when $\pi$ is a birational morphism.
Let $B$ be the closed subset of $Y$, over which $\pi$ is not isomorphic. Let $\mathcal{B}_1$ be the set of prime divisors in $B$ and $\mathcal{B}_2$ the set of prime divisors in $\pi^{-1}(B)$. 
For each $n>0$, we have a natural map $$\varphi_n:P_{f^n}\backslash \mathcal{B}_2\to P_{g^n}\backslash \mathcal{B}_1,$$ sending $D$ to $\pi(D)$. Note that $\varphi_n$ admits an inverse, sending $D'\in P_{g^n}\backslash{\mathcal{B}_1}$ to its proper transform $\pi_*^{-1}(D')$. 
In particular, setting $K:=\sharp \mathcal{B}_1+\sharp \mathcal{B}_2$, we have: 
$$\sharp P_{g^n}-K \le\sharp P_{f^n}\le \sharp P_{g^n}+K.$$
\end{proof}

By a similar proof, we have the following lemma.

\begin{lemma}\label{bir_totally_periodic_case}
Let $\pi: X\dashrightarrow Y$ be a birational map between projective varieties. Let $f:X\to X$ and $g:Y\to Y$ be two surjective endomorphisms such that $\pi\circ f=g\circ \pi$. Then $\sharp P^{-1}_{f^\infty}$ is finite if and only if so is $\sharp P^{-1}_{g^\infty}$.
\end{lemma}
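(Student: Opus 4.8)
\emph{Proposal.} The plan is to run the argument of Lemma~\ref{bir_invariant_case} essentially verbatim, the only new ingredient being the transfer of the \emph{backward}-periodicity condition $f^{-s}(D)=D$ across the birational modification. As in that proof, passing to the graph $\Gamma\subseteq X\times Y$ and the lift $(f\times g)|_\Gamma$ reduces us to the case where $\pi\colon X\to Y$ is a birational morphism. Since $f,g$ are surjective endomorphisms of projective varieties they are finite, and after replacing $X,Y$ by their normalizations (a harmless reduction: under a finite birational morphism $\sharp P^{-1}$ changes by a bounded amount, and $d_1(f^s)=d_1(g^s)$ is unaffected by birational invariance of dynamical degrees) we may assume $X,Y$ normal. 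Let $B\subseteq Y$ be the closed locus over which $\pi$ fails to be an isomorphism, let $\mathcal B_1$ be the (finite) set of prime divisors contained in $B$ and $\mathcal B_2$ the (finite) set of prime divisors contained in $\pi^{-1}(B)$. Then $D\mapsto\pi(D)$ and $W\mapsto\pi_*^{-1}(W)$ (proper transform) are mutually inverse bijections between the prime divisors of $X$ not lying in $\mathcal B_2$ and the prime divisors of $Y$ not lying in $\mathcal B_1$, and I would show that they restrict to mutually inverse bijections $P^{-1}_{f^\infty}\setminus\mathcal B_2\leftrightarrow P^{-1}_{g^\infty}\setminus\mathcal B_1$; the bound $\bigl|\sharp P^{-1}_{f^\infty}-\sharp P^{-1}_{g^\infty}\bigr|\le\sharp\mathcal B_1+\sharp\mathcal B_2$ then gives the claimed equivalence.

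All the conditions defining $P^{-1}$ except backward-periodicity transfer formally, exactly as in Lemma~\ref{bir_invariant_case}: for a prime divisor $D\notin\mathcal B_2$ the restriction $\pi|_D\colon D\to\pi(D)$ is birational, so $\overline{f^s(D)}=D\iff\overline{g^s(\pi(D))}=\pi(D)$ (using $g\circ\pi=\pi\circ f$ and that $\pi$ is proper, hence closed), $d_1(f^s|_D)=d_1(g^s|_{\pi(D)})$ by birational invariance of dynamical degrees, so the SDD inequality $d_1(f^s|_D)<d_1(f^s)$ transfers, and maximality in $\Sigma_{f^\infty}$ versus $\Sigma_{g^\infty}$ transfers because a strictly larger member of either set would, via image or via proper transform and the same computations, yield a strictly larger member of the other.

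The crux is the equivalence $f^{-s}(D)=D\iff g^{-s}(\pi(D))=\pi(D)$, and this is where finiteness enters essentially. On normal models the preimage of a prime divisor under a finite surjective morphism is pure of codimension one (via going-down for integrally closed domains); hence $f^{-s}(D)$ and $g^{-s}(\pi(D))$ are unions of prime divisors, and each prime divisor $E'\subseteq f^{-s}(D)$ satisfies $f^s(E')=D$, since the finite map $f^s$ contracts nothing and $f^s(E')$ is then a prime divisor contained in the prime divisor $D$. If such an $E'$ were $\pi$-exceptional, then $\pi(E')$ would have codimension $\ge2$ in $Y$, hence so would $g^s(\pi(E'))=\pi(f^s(E'))$ by finiteness of $g^s$, contradicting $\pi(f^s(E'))=\pi(D)$. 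Thus $f^{-s}(D)$ has no $\pi$-exceptional component, and then pushing forward and pulling back along $\pi$ matches the prime components of $f^{-s}(D)$ bijectively with those of $g^{-s}(\pi(D))$ (via $\pi_*^{-1}$); consequently one of these backward orbits is the single divisor $D$ (resp.\ $\pi(D)$) if and only if the other is. I expect this last point---controlling which prime divisors can enter the backward orbit across the modification locus---to be the only real obstacle, and it is dispatched precisely by the non-contracting property of the finite maps $f,g$ together with the fact that $\pi$-exceptional divisors have image of codimension $\ge2$, once one has arranged (by normalizing) that preimages of prime divisors are genuinely pure of codimension one.
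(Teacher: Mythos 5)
Your proposal is correct and follows exactly the route the paper intends: the paper gives no explicit argument for this lemma beyond ``by a similar proof'' referring to Lemma~\ref{bir_invariant_case}, and you reproduce that reduction (graph, then the sets $\mathcal B_1,\mathcal B_2$) while correctly supplying the one genuinely new point, namely that after normalizing, no component of $f^{-s}(D)$ can be $\pi$-exceptional, so backward-periodicity transfers across the modification.
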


\begin{proposition}\label{prop_fixed_points}
Let $f:X\to X$ be a surjective endomorphism of a projective variety. Then there are only finitely many points (as zero-dimensional subvarieties) in the set $S_f$.
\end{proposition}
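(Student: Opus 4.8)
The plan is to reduce this to the elementary fact that a closed subset of a Noetherian topological space has only finitely many zero-dimensional irreducible components. Concretely, I will show that every point $\{x\}$ belonging to $S_f$ is an isolated point of the fixed locus $\Fix(f)$, and then use that $X$ is Noetherian.

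First I would dispose of a degenerate case. A zero-dimensional subvariety $V$ satisfies $d_1(f^s|_V)=1$ for every $s>0$ (an endomorphism of a point has dynamical degree $1$; this is the same convention under which $S_{f^\infty}=\Per(f)$ for a $q$-polarized $f$), whereas $d_1(f^s)=d_1(f)^s$. Hence if $d_1(f)=1$ no point is SDD, so $S_f$ contains no point and we are done; thus we may assume $d_1(f)>1$. Also, if $\{x\}\in S_f$ then $\{x\}$ is $f$-invariant, i.e.\ $f(x)=x$, so $x\in\Fix(f)$.

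Next I would suppose, towards a contradiction, that such an $x$ lies on an irreducible component $Z$ of $\Fix(f)$ with $\dim Z>0$. Since $Z$ is pointwise fixed we have $f|_Z=\id_Z$, so $\overline{f(Z)}=Z$ (in particular $Z$ is $f$-periodic, indeed $f$-invariant) and $d_1(f|_Z)=d_1(\id_Z)=1<d_1(f)$; thus $Z\in\Sigma_{f^\infty}$. But then $\{x\}\subsetneq Z$ with $Z\in\Sigma_{f^\infty}$ contradicts the maximality of $\{x\}$ as an element of $S_{f^\infty}$. Therefore $\{x\}$ must be a zero-dimensional irreducible component of $\Fix(f)$.

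Finally I would invoke that $\Fix(f)$ is the image of $\Gamma_f\cap\Delta_X\subseteq X\times X$ under the first projection (here $\Gamma_f$ is the graph of $f$ and $\Delta_X$ the diagonal), hence is closed in $X$; as $X$ is projective, $\Fix(f)$ is Noetherian and so has only finitely many irreducible components, a fortiori only finitely many zero-dimensional ones. This bounds the number of points of $S_f$. I do not expect a genuine obstacle here: the only things needing care are the convention fixing $d_1=1$ for an endomorphism of a point (and hence the case $d_1(f)=1$), and the one real observation — that every positive-dimensional component of $\Fix(f)$ is automatically of small dynamical degree, which is what forces points of $S_f$ to be isolated fixed points.
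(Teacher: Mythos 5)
Your argument is correct and is essentially the paper's proof: the key point in both is that a positive-dimensional irreducible variety of fixed points is $f$-invariant with $d_1=1<d_1(f)$, hence lies in $\Sigma_{f^\infty}$ and contradicts the maximality of any point of $S_f$ it contains. The only cosmetic difference is that you run the Noetherian finiteness argument on the components of $\Fix(f)$, whereas the paper runs it on the closure of the (assumed infinite) set of points of $S_f$; both are fine.
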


\begin{proof}
We may assume $d_1(f)>1$.
Let $Y \subseteq S_f$ be the subset of $f$-fixed closed points. Suppose the contrary that $\sharp Y$ is infinite. Let $Z$ be an irreducible component of the closure of $Y$ in $X$ with $\dim (Z)>0$. Then $f|_Z=\textup{id}_Z$ and $1=d_1(f|_Z)<d_1(f)$, a contradiction to the maximality of the points in $Y$.
\end{proof}

\section{Connection of two arithmetic dynamical conjectures}\label{S:rel_2_conj}

In this section, we work over a number field $K$.
We will give the relation between two conjectures
in Theorem \ref{thm-equiv}, which also gives an arithmetic motivation of this paper.
We refer to \cite[Section 2]{MMSZ20} for the notation involved in this section.

The following are the two main conjectures studied in \cite{MMSZ20}. 

\begin{conjecture}\label{conj_ks} ({\bf Kawaguchi-Silverman Conjecture}, ~\cite{KS16})
Let $f: X \to X$ be a surjective endomorphism of a projective variety $X$ defined over a number field $K$.
Let $x \in X(\overline K)$ such that its arithmetic degree $\alpha_{f}(x)< d_1(f)$.
Then the orbit $O_f(x)$ is not Zariski dense in
$X_{\overline{K}} := X \times_K \overline{K}$.
\end{conjecture}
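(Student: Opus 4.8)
This is the (weak) Kawaguchi--Silverman Conjecture, which is open in general; what follows is a reduction program together with an honest account of where it stalls. The plan is to prove the contrapositive: if $O_f(x)$ is Zariski dense in $X_{\overline K}$, then $\alpha_f(x)=d_1(f)$, so in particular $\alpha_f(x)$ cannot be $<d_1(f)$. First I would record the two unconditional inputs: the limit defining $\alpha_f(x)$ exists (not merely as a $\limsup$), and one always has $\alpha_f(x)\le d_1(f)$; hence the whole content is the reverse inequality $\alpha_f(x)\ge d_1(f)$ under Zariski density. It is harmless to replace $f$ by an iterate $f^s$ throughout, since this raises both $\alpha_f(x)$ and $d_1(f)$ to the $s$-th power (using $d_1(f^s)=d_1(f)^s$ from Notation~\ref{not-2.1}) and preserves Zariski density of the orbit.

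Next I would set up an $f$-equivariant fibration argument and induct on $\dim X$. After passing to a normalization and then, using the birational invariance of $d_1$ recalled in Notation~\ref{not-2.1} together with a standard height-comparison lemma for $\alpha_f$ under birational maps, to a convenient $f$-equivariant birational model, one considers an $f$-equivariant surjection $\pi:X\to Y$ with $g:=f|_Y$ and splits via the product formula $d_1(f)=\max\{d_1(f|_\pi),d_1(g)\}$. If $d_1(f)=d_1(g)$ with $\dim Y<\dim X$, one pushes the point down: $O_g(\pi(x))$ is Zariski dense in $Y$, so by induction $\alpha_g(\pi(x))=d_1(g)=d_1(f)$, and since heights only grow under pullback by $\pi$ of an ample class one gets $\alpha_f(x)\ge\alpha_g(\pi(x))=d_1(f)$. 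If instead $d_1(f)=d_1(f|_\pi)>d_1(g)$, the dynamics is fiberwise; running an $f$-equivariant minimal model program one arranges $X$ to be one of the standard building blocks --- abelian varieties (Silverman, Kawaguchi--Silverman), $q$-polarized and more generally int-amplified endomorphisms (Meng--Zhang, with Theorem~\ref{main-thm-all}\,(2)--(4) supplying the control of the cohomological and Albanese pieces), and the surface case (Matsuzawa--Sano--Shibata) --- on which the conjecture is already known and which serve as base cases.

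The hard part, and the reason no complete proof exists, is the fiberwise case $d_1(f)=d_1(f|_\pi)$ when the image orbit $O_g(\pi(x))$ is itself Zariski dense in $Y$: one needs an \emph{arithmetic product formula}, namely that the Weil height of $f^n(x)$ grows like $d_1(f|_\pi)^n$ along such an orbit, and no such estimate is available once one leaves the polarized, int-amplified, or group-theoretic settings, where an ample class pulls back multiplicatively or a translation structure is present. This is precisely the gap that the present paper circumvents rather than closes: instead of bounding $\alpha_f(x)$ directly, one reformulates the weak KSC as the small Arithmetic Non-Density conjecture (Conjecture~\ref{conj_zf}, through Theorem~\ref{thm-equiv}), whose geometric input is the finiteness of $f$-invariant maximal periodic SDD subvarieties --- established here in the divisorial case in Theorem~\ref{main-thm-pf} and for algebraic groups and toric varieties in Theorem~\ref{main-thm-gp}. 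A genuine proof of the conjecture in full generality would therefore require either completing this arithmetic product formula or establishing sAND unconditionally, both beyond current techniques.
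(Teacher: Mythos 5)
The statement you were asked about is labeled as a \emph{Conjecture} in the paper (the Kawaguchi--Silverman Conjecture, quoted from \cite{KS16}); the paper supplies no proof of it, and indeed none exists. You correctly recognize this and do not claim a proof, so there is nothing to fault and nothing in the paper to compare your argument against. Your account of the unconditional inputs (existence of the limit defining $\alpha_f(x)$, the inequality $\alpha_f(x)\le d_1(f)$, compatibility with iteration), of the standard equivariant-fibration reduction via the product formula, of the known base cases, and of the precise obstruction in the fiberwise case $d_1(f)=d_1(f|_\pi)$ is accurate. You also correctly identify the role this conjecture plays in the paper: it is an input to Theorem~\ref{thm-equiv}, which, together with affirmative answers to Question~\ref{ques_subvar}, yields Conjecture~\ref{conj_zf}; the paper's actual theorems (\ref{main-thm-gp}, \ref{main-thm-pf}) address the geometric finiteness questions feeding into that equivalence, not the conjecture itself. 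In short: the correct answer here is that no proof should be offered, and that is the answer you gave.
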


\begin{conjecture}\label{conj_zf} ({\bf small Arithmetic Non-Density Conjecture}, ~\cite[Conjecture 1.4]{MMSZ20})
Let $f:X\to X$ be a surjective endomorphism of a projective variety $X$ defined over a number field $K$.
Then the set $Z_f(d):=\{x\in X(d)\,|\, \alpha_{f}(x)< d_1(f)\}$ is not Zariski dense in $X_{\overline{K}} = X \times_K \overline{K}$
for any positive constant $d>0$.
\end{conjecture}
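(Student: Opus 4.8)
The plan is to deduce Conjecture~\ref{conj_zf} from Conjecture~\ref{conj_ks} by proving them equivalent (this is Theorem~\ref{thm-equiv}), granting along the way the positive answers to the companion subvariety questions (Question~\ref{ques_subvar} and Question~\ref{Q1}). The implication \emph{(strong) sAND $\Rightarrow$ (weak) KSC} is essentially formal: given $x$ with $\alpha_f(x)<d_1(f)$, put $d:=[K(x):K]$; since $f$ is defined over $K$ every $f^n(x)$ has degree $\le d$ over $K$, and since $\alpha_f$ is an orbit invariant with $\alpha_f(f^n(x))=\alpha_f(x)<d_1(f)$, the whole orbit lies in $Z_f(d)$; as $Z_f(d)$ is not Zariski dense, neither is $O_f(x)$. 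So the work is the converse, and I would prove that $Z_f(d)$ is not Zariski dense by Noetherian induction on $\dim X$, the base case (no proper $f$-periodic SDD subvariety on $X$) being exactly what one of the companion questions packages --- on curves it reduces to Northcott's theorem together with $\alpha_f\ge 1$.

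For the inductive step, fix $d$, assume $d_1(f)>1$ (otherwise $Z_f(d)=\emptyset$), and suppose for contradiction that $Z_f(d)$ is Zariski dense. Let $x\in Z_f(d)$ have infinite orbit; after replacing $x$ by a suitable point of its orbit (still in $Z_f(d)$) and $f$ by an iterate $f^{s}$, the orbit closure $W_x:=\overline{O_{f^{s}}(x)}$ is irreducible and $f^{s}$-invariant, and intrinsicness of the arithmetic degree gives $\alpha_{f^{s}|_{W_x}}(x)=\alpha_f(x)^{s}<d_1(f)^{s}=d_1(f^{s})$. If $W_x=X$ then $x$ has Zariski dense orbit while $\alpha_f(x)<d_1(f)$, contradicting Conjecture~\ref{conj_ks}; hence $W_x\subsetneq X$, so $\dim W_x<\dim X$. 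If moreover $d_1(f^{s}|_{W_x})=d_1(f^{s})$ (i.e.\ $W_x$ is not SDD), then $x$ is a small point for $(W_x,f^{s}|_{W_x})$ with Zariski dense orbit, so $O_{f^{s}}(x)\subseteq Z_{f^{s}|_{W_x}}(d)$ is Zariski dense in $W_x$, contradicting Conjecture~\ref{conj_zf} for $W_x$ by the inductive hypothesis. Therefore $W_x$ is a proper $f$-periodic SDD subvariety, and $x$ lies in a maximal element $V\in S_{f^{\infty}}$ with $V\subsetneq X$.

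It then remains to contradict the two facts that the infinite-orbit part of $Z_f(d)$ is covered by $\bigcup_{V\in S_{f^{\infty}}}V$ and that the preperiodic points of degree $\le d$ form the rest. Here the companion questions enter decisively: one uses them to see that only finitely many maximal periodic SDD subvarieties can meet $Z_f(d)$ in a Zariski dense subset --- so that, after replacing $f$ by an iterate, these reduce to the finite set $S_f$ of Question~\ref{Q1} --- and that the preperiodic points of bounded degree are themselves not Zariski dense. Granting this, $Z_f(d)$ sits inside a finite union of proper closed subvarieties of $X$ plus a non-dense set, hence is not Zariski dense, the desired contradiction; feeding the relevant lower-dimensional pairs back into the induction completes the argument and, combined with the formal direction, gives the equivalence. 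I expect the bookkeeping in this last step to be the main obstacle. A priori $\Sigma_{f^{\infty}}$ and even $S_{f^{\infty}}$ are infinite, and the iterate $s$ above depends on $x$, so the trapping subvariety and the power of $f$ must be uniformized while holding $d$ fixed; worse, a point of $Z_f(d)$ trapped in an SDD subvariety $V$ need \emph{not} be a small point for $f|_V$ (its arithmetic degree can lie strictly between $d_1(f|_V)$ and $d_1(f)$), so the recursion is not automatic, and preperiodic-but-not-periodic small points of bounded degree need not lie on any element of $S_{f^{\infty}}$ at all. Isolating precisely these difficulties into the companion questions is exactly what makes the equivalence ``easy'', and it is why the paper pursues an unconditional statement only towards Question~\ref{Q1} itself, in the cases of Theorems~\ref{main-thm-gp} and~\ref{main-thm-pf}.
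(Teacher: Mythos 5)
Your proposal is correct and takes essentially the same route as the paper's Theorem~\ref{thm-equiv}: Conjecture~\ref{conj_ks} forces the tail $\overline{O_{f^s}(f^t(x))}$ of each orbit closure to be a periodic SDD subvariety, and Question~\ref{ques_subvar}\,(2),\,(3),\,(1) are then invoked exactly as you indicate to uniformize the period, the number of maximal such subvarieties, and the entry time, trapping $Z_f(d)$ in a finite union of sets $f^{-i}(Y)$ with $Y\subsetneq X_{\overline K}$. The only inessential difference is your Noetherian induction wrapper, which does no work: the case $d_1(f^s|_{W_x})=d_1(f^s)$ with $W_x\subsetneq X$ already contradicts Conjecture~\ref{conj_ks} applied directly to $(W_x,f^s|_{W_x})$, so no inductive hypothesis is needed.
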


We copy \cite[Question 8.4]{MMSZ20} in the following.
Note that for the equivalence of the above two conjectures, we may only focus on $Y\in S_{f^{\infty}}$ in Question \ref{ques_subvar}\,(1) and (2); see the proof of Theorem \ref{thm-equiv}.

\begin{question}\label{ques_subvar} (\cite[Question 8.4]{MMSZ20})
Let $f: X \to X$ be a surjective endomorphism on a projective variety with $d_1(f)>1$.
Fix  $d>0$.

\begin{itemize}
\item[(1)]
Let $Y$ be an $f$-invariant subvariety of $X_{\overline{K}}$.
Then, is $X(d) \cap (f^{-s}(Y) \setminus f^{-(s-1)}(Y))$  empty for $s \gg 1$?

\item[(2)]
Is there a positive integer $N$ such that $Y(d)$ is empty for any $f$-periodic  subvariety $Y \subseteq X_{\overline K}$ of small dynamical degree whose period is larger than $N$?

\item[(3)] (cf.~Question \ref{Q1}\,(1))
Is $S_{f^s|_{X_{\overline{K}}}}$ a finite set for any $s \ge 1$?
\end{itemize}
\end{question}

Note that Conjecture \ref{conj_zf} implies Conjecture \ref{conj_ks}. Conversely, we have:

\begin{theorem}\label{thm-equiv}
Suppose Question \ref{ques_subvar} and Conjecture \ref{conj_ks} hold true.
Then Conjecture \ref{conj_zf} holds true.
\end{theorem}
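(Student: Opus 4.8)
The plan is to deduce Conjecture \ref{conj_zf} from Conjecture \ref{conj_ks} together with the three parts of Question \ref{ques_subvar}, by an induction on $\dim X$ and a Noetherian-type argument on the stratification of $X$ by periodic subvarieties. We may assume $d_1(f)>1$, since otherwise $Z_f(d)$ being Zariski dense would contradict Conjecture \ref{conj_ks} already on a dense orbit (when $d_1(f)\le 1$ the relevant statement is essentially vacuous or classical). Fix a bounded degree $d>0$. Suppose for contradiction that $Z_f(d)$ is Zariski dense in $X_{\overline K}$. We want to locate, inside $Z_f(d)$, a point with Zariski dense $f$-orbit and arithmetic degree $<d_1(f)$, contradicting Conjecture \ref{conj_ks}.

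The first step is to understand where the points of $Z_f(d)$ of small arithmetic degree whose orbit is \emph{not} dense must concentrate. If $x\in Z_f(d)$ and $O_f(x)$ is not Zariski dense, then the Zariski closure $\overline{O_f(x)}$ is a proper $f^{+}$-invariant closed subset; passing to an irreducible component and then to an iterate we obtain an $f^s$-periodic irreducible subvariety $W$ containing a tail of the orbit, and one checks (this is where one uses the behaviour of arithmetic degrees under restriction, cf.~the arguments in \cite{MMSZ20}) that $\alpha_{f^s|_W}(x')<d_1(f^s|_W)$ is \emph{not} automatic — rather the inequality $\alpha_{f}(x)<d_1(f)$ combined with $d_1(f^s|_W)\le d_1(f^s)$ forces $x$ to sit over a periodic SDD subvariety when $d_1(f^s|_W)=d_1(f^s)$ fails. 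The key reduction, explained in the sentence preceding the theorem, is that it suffices to work with $Y\in S_{f^\infty}$: every proper periodic SDD subvariety is contained in a maximal one, and by Question \ref{ques_subvar}\,(3) there are only finitely many $f^{s}$-invariant such maximal subvarieties for each fixed $s$, while Question \ref{ques_subvar}\,(2) bounds the periods of periodic SDD subvarieties carrying a point of $X(d)$. Hence, after replacing $f$ by a suitable iterate $f^s$, the union of all periodic SDD subvarieties of $X_{\overline K}$ that contain a point of $X(d)$ is a \emph{finite} union $\bigcup_{i} Y_i$ of $f^s$-invariant subvarieties, each of dimension $<\dim X$.

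The second step is the descent. By Question \ref{ques_subvar}\,(1), for each invariant $Y_i$ the set $X(d)\cap\bigl(f^{-s}(Y_i)\setminus f^{-(s-1)}(Y_i)\bigr)$ is empty for $s\gg1$, so a point of $X(d)$ whose forward orbit eventually enters $Y_i$ must in fact have its \emph{entire} orbit (from the start, after finitely many exceptions handled uniformly) contained in $\bigcup_i Y_i$; combined with Step 1 this shows that every $x\in Z_f(d)$ either has Zariski dense orbit in $X_{\overline K}$ — which immediately contradicts Conjecture \ref{conj_ks} — or lies, together with its whole tail, inside one of the finitely many $Y_i$. But then $x$ belongs to $Z_{f^s|_{Y_i}}(d')$ for an adjusted bounded degree $d'$ (degrees of points change by a bounded factor under the inclusion $Y_i\hookrightarrow X$ defined over a fixed extension), and one needs $\alpha_{f^s|_{Y_i}}(x)<d_1(f^s|_{Y_i})$: this follows because $d_1(f^s|_{Y_i})<d_1(f^s)$ by the SDD property, while $\alpha_{f^s}(x)=\alpha_f(x)^s<d_1(f)^s=d_1(f^s)$ may a priori be larger than $d_1(f^s|_{Y_i})$ — so here one must instead observe that $\alpha_{f^s|_{Y_i}}(x)\le\alpha_{f^s}(x)$ is the \emph{wrong} direction, and the correct move is to use that $Y_i$ being \emph{maximal} SDD means $x$ cannot have dense orbit in $Y_i$ with $\alpha$ large, i.e.\ we re-run the whole argument on $Y_i$. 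Thus by induction on dimension, $Z_{f^s|_{Y_i}}(d')$ is not Zariski dense in $Y_i$, and therefore $Z_f(d)\subseteq(\text{dense-orbit points})\cup\bigcup_i(\text{closure of }Z_{f^s|_{Y_i}}(d'))$; the first part is empty by Conjecture \ref{conj_ks} and the rest is a finite union of proper closed subsets, so $Z_f(d)$ is not Zariski dense, a contradiction.

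\textbf{Main obstacle.} The delicate point is the bookkeeping in Step 2: making sure that the ``bounded degree'' hypothesis propagates correctly down the finitely many strata $Y_i$ (the field of definition of the $Y_i$ and of the points, and the comparison of $\alpha_{f^s|_{Y_i}}$ with $\alpha_f$), and, relatedly, verifying that restricting to a periodic SDD subvariety genuinely keeps the orbit non-dense-ness hypothesis in a form to which the induction applies — this is exactly the subtlety that forces one to invoke \emph{all three} parts of Question \ref{ques_subvar} rather than part (3) alone, and it is where I expect the bulk of the technical work (matching the framework of \cite[Section 2]{MMSZ20} on arithmetic degrees under equivariant morphisms) to lie.
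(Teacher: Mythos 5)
Your overall strategy matches the paper's: decompose the orbit closure, show the tail lands in a periodic SDD subvariety, use Question \ref{ques_subvar}\,(2) and (3) to reduce to finitely many maximal SDD subvarieties $Y$, and use (1) to bound the entry time so that $Z_f(d)\subseteq\bigcup_{Y,\,0\le i\le M}f^{-i}(Y)$. However, there is a genuine gap at the single place where Conjecture \ref{conj_ks} must actually be invoked. Writing $Y_0=\overline{O_{f^s}(f^t(x))}$ for the $f^s$-invariant tail of the orbit closure, you need $d_1(f^s|_{Y_0})<d_1(f^s)$, i.e.\ that $Y_0$ is SDD. Your Step 1 only observes that \emph{if} $d_1(f^s|_{Y_0})<d_1(f^s)$ then $Y_0$ is SDD (a tautology) and never rules out the case $d_1(f^s|_{Y_0})=d_1(f^s)$. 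The paper's argument here is: the $f^s$-orbit of $f^t(x)$ is Zariski dense in $Y_0$ by construction, so Conjecture \ref{conj_ks} applied to $(Y_0,f^s|_{Y_0})$ forces $\alpha_{f^s}(f^t(x))\ge d_1(f^s|_{Y_0})$, hence $\alpha_{f^s}(f^t(x))=d_1(f^s|_{Y_0})$; since $\alpha_{f^s}(f^t(x))=\alpha_f(x)^s<d_1(f)^s=d_1(f^s)$, the SDD property follows. Without this step your finite collection of subvarieties from parts (2)--(3) of Question \ref{ques_subvar} does not capture all points of $Z_f(d)$.

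Secondly, the induction on dimension in your Step 2 is both unnecessary and, as you yourself flag, not justified: the inequality $\alpha_{f^s|_{Y_i}}(x)<d_1(f^s|_{Y_i})$ indeed need not hold, and ``re-run the whole argument on $Y_i$'' does not resolve this. But nothing of the sort is needed. Once each $x\in Z_f(d)$ is shown to lie in $f^{-i}(Y)$ for some $Y$ in the finite set $S_{f^N}$ and some $i\le M$, the containment $Z_f(d)\subseteq\bigcup_{Y\in S_{f^N},\,0\le i\le M}f^{-i}(Y)$ already exhibits $Z_f(d)$ inside a proper closed subset (each $f^{-i}(Y)$ is proper closed since $f$ is surjective), which is the conclusion of Conjecture \ref{conj_zf}. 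Your final displayed containment also omits these preimages $f^{-i}(Y)$ and keeps only closures of sets inside the $Y_i$ themselves, which would miss the points whose orbit enters $Y_i$ only after finitely many steps.
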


\begin{proof}
Let $x\in Z_f(d)$, i.e., $\alpha_{f}(x)<d_1(f)$ and $x\in X(d)$.
By \cite[Lemma 2.8]{MMSZ20}, $$\overline{O_f(x)}=\{x\}\cup \{f(x)\} \cup \cdots \cup \{f^{t-1}(x)\}\cup \bigcup_{i=0}^{s-1}f^i(\overline{O_{f^s}(f^t(x))}),$$
which is a union of irreducible closed subsets and $Y_0:=\overline{O_{f^s}(f^t(x))}$ is $f^s$-invariant.
Conjecture \ref{conj_ks} implies that $\alpha_{f^s}(f^t(x))=d_1(f^s|_{Y_0})$.
Note that $$\alpha_{f^s}(f^t(x))=\alpha_{f}(x)^s<d_1(f)^s.$$
Then $Y_0\in \Sigma_{f^{\infty}}$ and $Y_0\subseteq Y$ for some $Y\in S_{f^{s'}}$ with $s'>0$.
Note that $f^t(x)\in Y\cap X(d)$.
By Question \ref{ques_subvar}\,(2), there exists a constant $N>0$ such that $Y\in S_{f^{N}}$ for any $Y$ satisfying $Y\cap X(d)\neq\emptyset$.
Question \ref{ques_subvar}\,(3) implies that $S_{f^{N}}$ is finite.
Together with Question \ref{ques_subvar}\,(1), there exists another constant $M>0$ such that $x\in \bigcup_{Y\in S_{f^{N}},\, 0\le i\le M} f^{-i}(Y)$.
Therefore $$Z_f(d)\subseteq \bigcup_{Y\in S_{f^{N}},\, 0\le i\le M} f^{-i}(Y)$$ which is not Zariski dense in $X_{\overline{K}}$.
\end{proof}

\section{Isogenies of algebraic groups}

We use the following notation throughout this section.

\begin{notation}\label{notation_proof}
Let $G$ be a connected algebraic group with the identity element $e$.

We focus on the isogeny (i.e., surjective group homomorphism with finite kernel)
$$g:G\to G$$ 
with $d_1(g)>1$, and
$$f:=R_a\circ g$$
where $R_a$ is the right multiplication map by some $a\in G$.
Note that there exists a $G$-equivariant embedding $G\xhookrightarrow{} \widetilde{G}$ to a (smooth) projective variety $\widetilde{G}$ (cf.~\cite[Theorem 2]{Bri10}).
Then the same proof for \cite[Lemma 5.6]{MS18} shows that 
$$d_1(f)=d_1(g)>1.$$

We recall that $g$ and hence $f$ are finite surjective endomorphisms.
So $f$-invariant (resp.\,$f$-periodic) closed subvarieties defined in Notation \ref{not-2.1} are in the usual sense without taking closure.
\end{notation}

\begin{lemma}\label{lem-translation-d1}
Let $H$ be a (connected closed) algebraic subgroup of $G$.
Suppose $f(Hb)=Hb$ for some $b\in G$.
Then $g(H) = H$ and $d_1(f|_{Hb})=d_1(g|_H)$.
\end{lemma}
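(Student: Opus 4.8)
The plan is to analyze the equation $f(Hb) = Hb$ directly, using $f = R_a \circ g$ and the fact that $g$ is a group homomorphism. Writing out what $f$ does to the coset $Hb$, we get $f(Hb) = g(Hb)a = g(H)g(b)a$. So the hypothesis becomes $g(H)\,g(b)a = Hb$. First I would observe that $g(H)$ is again a connected closed algebraic subgroup of $G$ (the image of a connected group under an isogeny), and that an equality of the form $K c = H b$ between cosets of two subgroups $K, H$ forces $K = H$: indeed $Kc = Hb$ implies $Kcb^{-1} = H$, and since the left-hand side is a coset of $K$ containing (after translating) the identity precisely when $cb^{-1} \in K$, comparing identity components or just noting that a left coset of $K$ equal to a subgroup $H$ must have $K = H$. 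Applying this with $K = g(H)$ gives $g(H) = H$ immediately; this also forces $g(b)a \in H b$, i.e. $g(b) a b^{-1} \in H$, which records that the translation element lies in $H$ up to the relevant adjustments.

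Next, for the dynamical degree statement, the key point is that once $g(H) = H$, the restriction $g|_H : H \to H$ is an isogeny of $H$, and $f|_{Hb}: Hb \to Hb$ is conjugate, via the translation isomorphism $R_{b^{-1}}: Hb \to H$ (or rather an appropriate left/right translation identifying the coset $Hb$ with the group $H$), to a map of the form $R_{a'} \circ (g|_H)$ for a suitable $a' \in H$. Since dynamical degree is a birational invariant and in particular invariant under isomorphism of varieties conjugating the self-maps, we get $d_1(f|_{Hb}) = d_1(R_{a'} \circ g|_H)$. Finally, applying the same reasoning as in Notation \ref{notation_proof} — namely the argument of \cite[Lemma 5.6]{MS18}, which shows that composing an isogeny with a translation does not change the first dynamical degree because translations act trivially on the relevant cohomology / numerical classes of an equivariant compactification — yields $d_1(R_{a'} \circ g|_H) = d_1(g|_H)$. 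Chaining these equalities gives $d_1(f|_{Hb}) = d_1(g|_H)$, as desired.

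I expect the main obstacle to be the bookkeeping in the second paragraph: identifying the coset $Hb$ with the group $H$ in a way that conjugates $f|_{Hb}$ into the form "translation $\circ$ isogeny of $H$" requires care about left versus right cosets and left versus right multiplication, especially since $G$ need not be commutative. Concretely, one must check that the conjugate of $f|_{Hb}$ by the translation isomorphism is genuinely of the form $R_{a'}\circ (g|_H)$ with $a' \in H$, using $g(b)ab^{-1} \in H$ from the first step and the homomorphism property $g(hb) = g(h)g(b)$. Once that normal form is established, invoking the isomorphism-invariance of $d_1$ and the translation-invariance result cited in Notation \ref{notation_proof} is routine. A minor secondary point is ensuring $g|_H$ is actually finite (it is, being the restriction of a finite morphism) so that "isogeny of $H$" is justified and the cited lemma applies.
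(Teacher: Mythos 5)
Your proposal is correct and follows essentially the same route as the paper: the paper also deduces $g(H)=H$ from the coset identity (computing $g(H)=Hbf(b)^{-1}=Hp^{-1}=H$ with $p:=f(b)b^{-1}\in H$), and then conjugates $f|_{Hb}$ by the translation isomorphism $R_b:H\to Hb$ to get exactly the normal form $(R_p\circ g)|_H$ you describe, concluding via the translation-invariance of $d_1$ from \cite[Lemma 5.6]{MS18}. The bookkeeping you flag works out with $a'=p=g(b)ab^{-1}$, just as you anticipated.
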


\begin{proof}
Let $p:=f(b)b^{-1}\in H$.
We have
$$g(H)=g(Hb)g(b)^{-1}=f(Hb)a^{-1}(f(b)a^{-1})^{-1}=Hbf(b)^{-1}=Hp^{-1}=H.$$
Moreover, We have the following commutative diagram:
\[
\xymatrix@C=60pt@R=30pt{
H \ar[r]^{(R_p\circ g)|_{H}}\ar[d]_{R_b} & H\ar[d]_{R_{b}}\\
Hb \ar[r]^{f|_{Hb}}  & Hb 
}
\]
where $R_b:H\to Hb$ is an isomorphism of varieties.
So $d_1(f|_{Hb})=d_1((R_p\circ g)|_{H}) =d_1(g|_{H})$ (cf.~\cite[Lemma 5.6]{MS18}).
\end{proof}

The following result is a generalization of Lemma \ref{lem-translation-d1} to arbitrary closed subvariety of $G$, which plays a crucial role in many later results.
\begin{proposition}\label{prop-gen-d1}
Let $X$ be an $f$-invariant (irreducible closed) subvariety of $G$.
Fix some $b\in X$ and denote by $X_e:=Xb^{-1}$.
Let $H$ be the smallest algebraic subgroup of $G$ containing $X_e$.
Then $g(H)=H$, $f(Hb)=Hb$, and $d_1(g|_H)=d_1(f|_{Hb})\le d_1(f|_X)$.
\end{proposition}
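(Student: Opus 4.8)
The plan is to reduce the statement about an arbitrary $f$-invariant subvariety $X$ to the subgroup case (Lemma \ref{lem-translation-d1}) by passing through the translated copy $X_e = Xb^{-1}$ and the smallest algebraic subgroup $H \supseteq X_e$. First I would set up the translated dynamics: since $b \in X$ we have $e \in X_e$, and writing $p := f(b)b^{-1}$, the self-map $h := R_p \circ g$ satisfies $h(Yb^{-1}) = f(Y)b^{-1}$ for every subvariety $Y \subseteq G$ (this is the same conjugation-by-$R_b$ trick used in Lemma \ref{lem-translation-d1}, whose commutative diagram I would reuse verbatim with $H$ replaced by $X_e$). In particular $h$ is an isogeny composed with a right translation, $h(X_e) = X_e$, $d_1(h) = d_1(g) = d_1(f) > 1$, and $d_1(h|_{X_e}) = d_1(f|_X)$ since $R_b : X_e \to X$ is an isomorphism of varieties intertwining $h|_{X_e}$ and $f|_X$.

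Next I would show $g(H) = H$. The key point is that $H$ is generated by $X_e$, so it suffices to control where $g$ sends $X_e$. Since $h = R_p \circ g$ fixes $X_e$, we get $g(X_e) = h(X_e)p^{-1} = X_e p^{-1}$, hence $g(X_e) \subseteq X_e p^{-1} \subseteq H$ (as $p = h(b)b^{-1} \in$ the subgroup generated by $X_e$, because $b \in X_e$ — wait, $b\notin X_e$ in general, so one must instead argue $p\in H$ from $f(b)\in X$, i.e. $f(b)b^{-1}\in Xb^{-1}=X_e\subseteq H$). Thus $g(X_e)\subseteq H$, and since $g$ is a group homomorphism, $g(H)$ is the subgroup generated by $g(X_e)$, so $g(H) \subseteq H$. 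As $g$ is an isogeny (in particular $\dim g(H) = \dim H$ and $g(H)$ is a closed connected subgroup of finite index — actually equal — in $H$), we conclude $g(H) = H$. Then $f(Hb) = R_a(g(Hb)) = g(H)g(b)a^{-1} \cdot$ (reorganize) $= H \cdot g(b)a^{-1} = H f(b) = H b \cdot (b^{-1}f(b))$; since $b^{-1}f(b)$ — hmm, one wants $Hb$, so better to use right cosets consistently: $f(Hb) = g(H)g(b)a^{-1}$, and $g(H)=H$ gives $f(Hb) = Hg(b)a^{-1} = Hf(b)$, and $f(b) \in X \subseteq Hb$ forces $Hf(b) = Hb$. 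So $f(Hb) = Hb$.

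Now Lemma \ref{lem-translation-d1} applies to the subgroup $H$ and the coset $Hb$, giving $d_1(f|_{Hb}) = d_1(g|_H)$. For the inequality $d_1(g|_H) \le d_1(f|_X)$: translating back by $R_{b^{-1}}$, this is equivalent to $d_1(h|_H) \le d_1(h|_{X_e})$, and since $X_e \subseteq H$ are both $h$-invariant with $X_e$ irreducible, monotonicity of the relative dynamical degree under inclusion of invariant subvarieties — or more precisely, the fact that $d_1(h|_{X_e}) \ge d_1(h|_H)$ whenever $H$ is dominated by a fibration over a base built from the $X_e$-translates (since $H$ is covered by translates $X_e^{\pm 1}$ and products thereof, each a bounded-length product, one gets $d_1(h|_H)$ controlled by $d_1(h|_{X_e})$ via the product formula applied to multiplication maps $H \times \cdots \times H \to H$) — yields the claim. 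I would phrase this last step via the standard argument: $H$ is the image of $X_e \times \cdots \times X_e$ ($N$ factors, some inverted) under the multiplication morphism, which is $h^{\times N}$-equivariant for the product map, so $d_1(h|_H) \le d_1(h^{\times N}|_{X_e^{\times N}}) = d_1(h|_{X_e})$ by the product formula (part (c) of the dynamical-degree facts in Notation \ref{not-2.1}) combined with $d_1$ being non-increasing under surjective equivariant morphisms.

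The main obstacle I anticipate is making the last inequality $d_1(g|_H) \le d_1(f|_X)$ fully rigorous: one must justify that $H$ is a quotient of a finite product of copies of $X_e$ (and its inverse) by a multiplication morphism, that this morphism is equivariant for a suitable product self-map whose restriction to each factor is $h|_{X_e}$ (here one needs $h(X_e^{-1}) = X_e^{-1}$, which follows since $g$ is a homomorphism and $h$ differs from $g$ by a right translation — a minor but necessary check on how $h$ interacts with inversion), and that $d_1$ does not increase under a dominant equivariant morphism while the product formula gives $d_1$ of the product equals the max, here just $d_1(h|_{X_e})$. Everything else is bookkeeping with cosets and the conjugation diagram already established in Lemma \ref{lem-translation-d1}.
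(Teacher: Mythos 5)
Your overall architecture matches the paper's proof: show $p:=f(b)b^{-1}\in X_e\subseteq H$, deduce $g(H)=H$ from the fact that $H$ is generated by $X_e$ together with $g$ being an isogeny, get $f(Hb)=Hb$ by coset algebra, invoke Lemma \ref{lem-translation-d1} for the equality $d_1(f|_{Hb})=d_1(g|_H)$, and obtain the inequality from a surjective multiplication map $X^{\times N}\twoheadrightarrow H$ plus the product formula. (The paper proves $H\subseteq g(H)$ rather than $g(H)\subseteq H$ before applying the isogeny property, but either containment suffices.)

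The one step where your sketch would actually fail is the equivariance of the multiplication map. You propose to make each factor individually invariant, and in particular you want $h(X_e^{-1})=X_e^{-1}$ for $h=R_p\circ g$. This is false for non-commutative $G$: since $g(X_e)=X_ep^{-1}$, one computes $h(X_e^{-1})=g(X_e)^{-1}p=pX_e^{-1}p$, which need not equal $X_e^{-1}$. The correct device — and this is the real content of the paper's proof — is to use the \emph{alternating} product $s_n(x_1b,\dots,x_{2n}b)=(x_1b)(x_2b)^{-1}\cdots(x_{2n-1}b)(x_{2n}b)^{-1}$ on $X^{\times(2n)}$, whose image is $(X_eX_e^{-1})^{\cdot n}=H$ for $n\gg1$ by \cite[Proposition 2.2.6]{Spr09}. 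The point is that the translation terms cancel in adjacent pairs, $(g(x_1)pb)(g(x_2)pb)^{-1}=g(x_1)g(x_2)^{-1}$, so $s_n$ intertwines $(f|_X)^{\times(2n)}$ with $g|_H$ (not with $h|_H$ or any self-map of individual factors); the product formula then gives $d_1(g|_H)\le d_1((f|_X)^{\times(2n)})=d_1(f|_X)$ directly, with no need to discuss self-maps of $X_e^{-1}$ at all. With that replacement your argument is complete.
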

\begin{proof}
Let $p:=f(b)b^{-1} \in X_e\subseteq H$. We have
$$g(H)\supseteq g(X_e)=g(X)g(b)^{-1}=f(X)f(b)^{-1}=X_ep^{-1}.$$
Then $X_e\subseteq X_ep^{-1}(X_ep^{-1})^{-1}\subseteq g(H)$ and hence $H\le g(H)$ by the construction of $H$.
Since $g$ is an isogeny, $g(H)=H$.

Consider the alternate multiplication map $s_n: X^{\times(2n)} \to H$ defined by 
\begin{align*}
s_n(x_1b, \ldots,x_{2n}b)&=(x_1b)(x_2b)^{-1}\cdots (x_{2i+1}b)(x_{2i+2}b)^{-1}\cdots (x_{2n-1}b)(x_{2n}b)^{-1}\\
&=x_1x_2^{-1}\cdots x_{2i+1}x_{2i+2}^{-1}\cdots x_{2n-1}x_{2n}^{-1}\\
&\in (X_eX_e^{-1})^{\cdot n}
\end{align*}
Then the image of $s_n$ is $(X_eX_e^{-1})^{\cdot n}=H$ when $n\gg 1$ by \cite[Proposition 2.2.6]{Spr09} and noting that $e\in X_e$.
Consider the following diagram with $n\gg 1$:
\begin{equation}\label{diagram1}\tag{*}
\xymatrix@C=50pt@R=30pt{
X^{\times(2n)} \ar[r]^{(f|_X)^{\times(2n)}} \ar@{->>}[d]_{s_n} & X^{\times(2n)} \ar@{->>}[d]^{s_n}\\
H \ar[r]^{g|_H} & H 
}
\end{equation}
Note that $f(x_ib)=g(x_i)pb$.
Then 
\begin{align*}
s_n(f(x_1b),\ldots,f(x_{2n}b))&=s_n(g(x_1)pb,\ldots, g(x_{2n})pb)\\
&=g(x_1)g(x_2)^{-1}\cdots g(x_{2n-1})g(x_{2n})^{-1}\\
&=(g\circ s_n)(x_1b,\ldots,x_{2n}b).
\end{align*}
So the diagram commutes.
By the product formula, we have
$$d_1(g|_H)\le d_1((f|_X)^{\times(2n)})=d_1(f|_X).$$

On the other hand, note that $$f(Hb)=g(Hb)a=Hg(b)a=Hf(b)b^{-1}b=(Hp)b=Hb.$$
By Lemma \ref{lem-translation-d1}, we have $d_1(f|_{Hb})=d_1(g|_H)\le d_1(f|_X)$.
\end{proof}

\begin{lemma}\label{lem-unique}
There exists a unique  $X\in S_{g^{\infty}}$ 
containing $e \in G$.
Moreover, $X$ is an algebraic subgroup
and $g$-invariant.
\end{lemma}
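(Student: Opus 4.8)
The plan is to exploit Proposition~\ref{prop-gen-d1} to show that any element of $\Sigma_{g^\infty}$ through $e$ is contained in an algebraic subgroup that is itself in $\Sigma_{g^\infty}$, and then to collect all such subgroups into a single maximal one.

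\textbf{Step 1: reduce periodic to invariant via subgroups.} Let $V\in\Sigma_{g^\infty}$ with $e\in V$, so $g^s(V)=V$ for some $s>0$ and $d_1(g^s|_V)<d_1(g^s)=d_1(g)^s$. Apply Proposition~\ref{prop-gen-d1} to the $g^s$-invariant subvariety $V$ with the choice $b=e$: then $X_e=V$, and the smallest algebraic subgroup $H$ containing $V$ satisfies $g^s(H)=H$ and $d_1(g^s|_H)\le d_1(g^s|_V)<d_1(g)^s$. Since $g$ is an isogeny with $g(H)\subseteq H$ of the same dimension, in fact $g(H)=H$ (the descending chain $H\supseteq g(H)\supseteq g^2(H)\supseteq\cdots$ of connected subgroups of equal dimension stabilizes, and then $g^s(H)=H$ forces $g(H)=H$). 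Moreover $d_1(g|_H)$ satisfies $d_1(g|_H)^s=d_1(g^s|_H)<d_1(g)^s$, so $d_1(g|_H)<d_1(g)$, i.e. $H\in\Sigma_{g^\infty}$ and $H$ is $g$-invariant. Thus every element of $\Sigma_{g^\infty}$ through $e$ lies inside a $g$-invariant algebraic subgroup which is itself an SDD subvariety.

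\textbf{Step 2: produce a unique maximal such subgroup.} Let $\mathcal{H}$ be the set of connected algebraic subgroups $H\le G$ with $g(H)=H$ and $d_1(g|_H)<d_1(g)$ (nonempty, as $\{e\}\in\mathcal{H}$ since $d_1(g|_{\{e\}})=1<d_1(g)$). I claim $\mathcal{H}$ has a unique maximal element. Given $H_1,H_2\in\mathcal{H}$, let $H_3$ be the subgroup generated by $H_1$ and $H_2$; it is a connected algebraic subgroup, it is $g$-invariant since $g(H_3)$ is generated by $g(H_1)=H_1$ and $g(H_2)=H_2$, and I must check $d_1(g|_{H_3})<d_1(g)$. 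For this, note $H_3=(H_1\cdot H_2\cdots)$ is the image of a product map $H_1\times H_2\times\cdots\times H_1\times H_2\to H_3$ (finitely many factors, by the same $\operatorname{Spr09}$ argument on $(H_1H_2)^{\cdot n}$ used in Proposition~\ref{prop-gen-d1}), equivariant for $(g|_{H_1}\times g|_{H_2}\times\cdots)$ and $g|_{H_3}$; the product formula gives $d_1(g|_{H_3})\le d_1((g|_{H_1})\times\cdots)=\max\{d_1(g|_{H_1}),d_1(g|_{H_2})\}<d_1(g)$. Hence $\mathcal{H}$ is closed under "join"; since $G$ is finite-dimensional, iterating the join produces a unique maximal element $X\in\mathcal{H}$ containing every member of $\mathcal{H}$.

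\textbf{Step 3: identify $X$ as the unique element of $S_{g^\infty}$ through $e$.} By construction $X$ is a $g$-invariant algebraic subgroup with $d_1(g|_X)<d_1(g)$, so $X\in\Sigma_{g^\infty}$. It is maximal in $\Sigma_{g^\infty}$: if $X\subseteq V'$ with $V'\in\Sigma_{g^\infty}$, then $e\in V'$, so by Step~1 there is $H'\in\mathcal{H}$ with $V'\subseteq H'$; by Step~2, $H'\subseteq X$, forcing $V'=X$. Hence $X\in S_{g^\infty}$ and $e\in X$. For uniqueness, suppose $X'\in S_{g^\infty}$ also contains $e$; then $X'\in\Sigma_{g^\infty}$, so by Step~1 it lies in some $H'\in\mathcal{H}\subseteq$ (subvarieties containing $X$), i.e. $X'\subseteq H'\subseteq X$, and maximality of $X'$ gives $X'=X$. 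This proves existence and uniqueness, and the "moreover" clause is immediate since $X\in\mathcal{H}$.

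\textbf{Main obstacle.} The substantive point is Step~2: verifying that the join of two $g$-invariant SDD subgroups is again SDD. The mechanism is exactly the alternate/product-map trick of Proposition~\ref{prop-gen-d1} together with the product formula for dynamical degrees; the only care needed is to phrase the subgroup generated by $H_1,H_2$ as a \emph{dominant} image of a fixed finite product of copies of $H_1\times H_2$ so that the product-formula inequality $d_1(g|_{H_3})\le d_1(\prod g|_{H_i})$ applies. Everything else (the isogeny forcing $g(H)=H$ from $g(H)\subseteq H$ of full dimension, and the passage between $d_1(g|_H)$ and $d_1(g^s|_H)$) is routine.
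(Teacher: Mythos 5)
Your overall strategy is sound and runs on the same engine as the paper's proof (Proposition~\ref{prop-gen-d1}, the multiplication-map trick, and the product formula), but Step~1 contains a genuine gap: the assertion that $g(H)\subseteq H$. Proposition~\ref{prop-gen-d1}, applied to the $g^{s}$-invariant variety $V$ with $b=e$, only yields $g^{s}(H)=H$; nothing forces $g(H)$ to lie inside $H$, and your descending-chain parenthetical presupposes exactly the inclusion it is meant to justify. Worse, without $g(H)=H$ the quantity $d_1(g|_H)$ is not even defined, so the passage from $d_1(g^{s}|_H)<d_1(g^{s})$ to $d_1(g|_H)<d_1(g)$ is vacuous. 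The inclusion is in fact false in general: take $G=E^{3}$ for an elliptic curve $E$ and $g(x,y,z)=(y,x,nz)$ with $n\ge 2$, so $d_1(g)=n^{2}$; then $V=H=E\times\{0\}\times\{0\}$ is a $g^{2}$-invariant SDD subgroup (as $g^{2}|_H=\id_H$), yet $g(H)=\{0\}\times E\times\{0\}\not\subseteq H$. Consequently Step~1 does not deliver what Steps~2 and~3 consume, namely that every SDD subvariety through $e$ is contained in a member of your family $\mathcal{H}$ of $g$-\emph{invariant} SDD subgroups.

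The gap is repairable with machinery you already set up in Step~2: replace $H$ by the subgroup generated by $H,g(H),\dots,g^{s-1}(H)$. Each $g^{i}(H)$ is $g^{s}$-invariant with $d_1(g^{s}|_{g^{i}(H)})\le d_1(g^{s}|_H)<d_1(g^{s})$ (conjugate $g^{s}|_H$ by $g^{i}$, exactly the square the paper uses to prove $g$-invariance), the join of finitely many $g^{s}$-invariant SDD subgroups is again one by your product-map argument, and the resulting subgroup is visibly $g$-invariant; in the example above this replaces $E\times\{0\}\times\{0\}$ by $E\times E\times\{0\}$, which is the correct $G(g)$. With this correction your proof goes through. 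For comparison, the paper argues in the opposite order: it first picks a maximal element of $S_{g^{\infty}}$ through $e$ and uses Proposition~\ref{prop-gen-d1} together with maximality to conclude that this element is already a subgroup, then proves uniqueness via the join of two such subgroups, and only at the end upgrades periodicity to $g$-invariance using uniqueness — which is precisely the step your draft tries to shortcut.
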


\begin{proof}
Take $X \in S_{g^\infty}$ containing $e$. 
It exists because $d_1(g)>1$ and $d_1(g|_{\{e\}})=1$. 
Take $s>0$ such that  $g^s(X)=X$. 
Take the smallest (connected closed) algebraic subgroup $H$ of $G$ containing $X$.
By Proposition \ref{prop-gen-d1}, $g^s(H)=H$ and $d_1(g^s|_H)\le d_1(g^s|_X)<d_1(g^s)$.
By the maximality of $X$, we have $X=H$ and it is an algebraic subgroup.

Take another $Y \in S_{g^\infty}$ containing $e$ and then take $t>0$ such that $g^t(Y)=Y$.
The argument in the previous paragraph shows that $Y$ is a (connected closed) algebraic subgroup.
Let $Z$ be the smallest algebraic group containing the $g^{st}$-invariant irreducible closed subvariety $XY$.
Apply the similar commutative diagram (\ref{diagram1}) and argument in the proof of Proposition \ref{prop-gen-d1},
\[
\xymatrix@C=50pt@R=30pt{
(X\times Y)^{\times(2n)} \ar[rr]^{(g^{st}|_X\times g^{st}|_{Y})^{\times(2n)}} \ar@{->>}[d]_{s_n} && (X\times Y)^{\times(2n)} \ar@{->>}[d]^{s_n}\\
Z \ar[rr]^{g^{st}|_{Z}} && Z 
}
\]
where $s_n:(X\times Y)^{\times(2n)} \to Z=(XY)(XY)^{-1}\cdots(XY)(XY)^{-1}$  defined by
$$s_n(x_1,y_1, \ldots,x_{2n},y_{2n})=(x_1y_1)(x_2y_2)^{-1}\cdots (x_{2n-1}y_{2n-1})(x_{2n}y_{2n})^{-1}$$
is surjective with $n\gg 1$.
Then we have $d_1(g^{st}|_Z)<d_1(g^{st})$ and hence $Z=X=Y$ by the maximality of $X$ and $Y$.
This shows the uniqueness of $X$.

Next we show that $X$ is $g$-invariant. Note that $g(X)$ is also $g^s$-invariant and contains $e$.
By the product formula, the following diagram shows $d_1(g^s|_{g(X)})\le d_1(g^s|_{X})<d_1(g^s)$:
\[
\xymatrix@C=50pt@R=30pt{
X \ar[r]^{g^s|_{X}} \ar[d]_{g} & X \ar[d]^{g}\\
g(X) \ar[r]^{g^s|_{g(X)}} & g(X) }
\]
Therefore, we have $g(X) \subseteq X$ by the maximality and uniqueness of $X$.
In particular, $X$ is $g$-invariant since $g$ is an isogeny.
This finishes our proof.
\end{proof}

\begin{definition}\label{def-Gf}
With the notation in Lemma \ref{lem-unique}, we denote by $$G(f)=G(g):=X$$ the \textit{unique} 
element of $S_{g^\infty}$ containing $e \in G$. 
Note that $G(f)$ is $g$-invariant and $G(f^m)=G(g^m)=G(f)$ for any $m>0$.

{\it We shall prove in the next section that $G(f)$ is actually a normal subgroup of $G$.}
\end{definition}

\begin{proposition}\label{prop-gp-maxsdd}
A subvariety $X \subseteq G$ is a member of $S_f$ if and only if 
$X=G(f)b$ with $f(b)b^{-1} \in G(f)$ ($= G(g)$).
\end{proposition}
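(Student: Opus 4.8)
The plan is to prove both implications using the machinery just established, chiefly Proposition \ref{prop-gen-d1} together with the uniqueness statement of Lemma \ref{lem-unique}. For the ``if'' direction, suppose $X=G(f)b$ with $p:=f(b)b^{-1}\in G(f)$. Then the computation in the proof of Lemma \ref{lem-translation-d1} (with $H=G(f)$) gives $f(Xb^{-1}\cdot b)=f(G(f)b)=G(f)b=X$, so $X$ is $f$-invariant, and the commutative diagram there identifies $f|_X$ with $(R_p\circ g)|_{G(f)}$ up to the isomorphism $R_b$, hence $d_1(f|_X)=d_1(g|_{G(f)})<d_1(g)=d_1(f)$; thus $X\in\Sigma_{f^\infty}$. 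To see $X$ is maximal, suppose $X\subseteq X'$ with $X'\in\Sigma_{f^\infty}$, say $f^s(X')=X'$. After replacing $f$ by a suitable iterate (using $G(f^m)=G(f)$ and that $d_1$ scales), apply Proposition \ref{prop-gen-d1} to $X'$ with the same base point $b\in X\subseteq X'$: letting $H'$ be the smallest algebraic subgroup containing $X'b^{-1}$, we get $g$-invariance of $H'$ and $d_1(g|_{H'})=d_1(f|_{H'b})\le d_1(f|_{X'})<d_1(f)=d_1(g)$. Translating $H'$ back to contain $e$ — more precisely, noting $e\in X'b^{-1}\subseteq H'$ and $H'\in\Sigma_{g^\infty}$ — the maximality/uniqueness of $G(f)=G(g)$ forces $H'\subseteq G(f)$ (after passing to a further iterate so that $H'\in S_{g^\infty}$, or directly: $H'$ lies in some maximal member of $S_{g^\infty}$ containing $e$, which is $G(f)$). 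Hence $X'b^{-1}\subseteq H'\subseteq G(f)=Xb^{-1}$, giving $X'\subseteq X$ and so $X'=X$; thus $X\in S_f$.

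For the ``only if'' direction, take $X\in S_f$, so $X$ is $f$-invariant, maximal in $\Sigma_{f^\infty}$. Pick any $b\in X$ and set $X_e:=Xb^{-1}\ni e$. Let $H$ be the smallest algebraic subgroup of $G$ containing $X_e$. By Proposition \ref{prop-gen-d1}, $g(H)=H$, $f(Hb)=Hb$, and $d_1(g|_H)=d_1(f|_{Hb})\le d_1(f|_X)<d_1(f)=d_1(g)$, so $Hb\in\Sigma_{f^\infty}$ and, since $X=X_e b\subseteq Hb$, maximality of $X$ gives $X=Hb$; in particular $X_e=H$ is an algebraic subgroup containing $e$, and it is $g$-invariant with $d_1(g|_H)<d_1(g)$, so $H\in\Sigma_{g^\infty}$. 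Now $H$ is contained in some maximal member of $S_{g^\infty}$ containing $e$, which by Lemma \ref{lem-unique} is exactly $G(g)=G(f)$; conversely, I claim $H=G(f)$. Indeed, consider the subgroup generated by $H$ and $G(f)$ — more efficiently, apply the product-of-two-subvarieties diagram from the proof of Lemma \ref{lem-unique} to $H$ and $G(f)$ (both $g^{st}$-invariant subgroups through $e$ for suitable $s,t$), obtaining that the smallest algebraic subgroup $Z$ containing $H\cdot G(f)$ satisfies $d_1(g|_Z)<d_1(g)$; maximality and uniqueness of $G(f)$ then force $Z=G(f)=H$. Hence $X=Hb=G(f)b$, and $f(b)b^{-1}\in X_e=G(f)$, which is the asserted form.

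I expect the main obstacle to be the careful bookkeeping of iterates: the definitions of $\Sigma_{f^\infty}$, $S_{f^\infty}$, and the SDD condition are phrased in terms of ``some $s>0$ with $f^s(V)=V$,'' while Lemma \ref{lem-unique} and Definition \ref{def-Gf} are stated for the invariant subgroup $G(f)$ with the key identity $G(f^m)=G(f)$. One must ensure that when an $f$-invariant $X$ sits inside an only-$f^s$-periodic $X'$, the reductions to iterates are compatible on both sides, and that ``$H$ lies in a maximal SDD subgroup containing $e$'' can legitimately be upgraded to ``$H\subseteq G(f)$'' using the uniqueness in Lemma \ref{lem-unique} rather than merely existence of a maximal overgroup. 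The second, cleaner route via the product diagram $Z=\overline{\langle H,G(f)\rangle}$ sidesteps the subtlety by directly showing $d_1(g|_Z)<d_1(g)$ and invoking maximality of $G(f)$, so I would favor that argument; everything else is a direct application of the commutative squares and the product formula $d_1(F^{\times m})=d_1(F)$ already recorded in Proposition \ref{prop-gen-d1} and Notation \ref{not-2.1}.
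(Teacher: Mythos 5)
Your overall strategy matches the paper's (Proposition \ref{prop-gen-d1} plus the uniqueness in Lemma \ref{lem-unique}), and your ``if'' direction is sound: showing $G(f)b\in\Sigma_{f^\infty}$ and then, for any $X'\supseteq G(f)b$ in $\Sigma_{f^\infty}$, applying Proposition \ref{prop-gen-d1} to $X'$ (for the relevant iterate of $f$) to trap $X'b^{-1}$ inside $G(f)$ is a valid, slightly more direct variant of the paper's maximality argument. The problem is in the ``only if'' direction, at the step where you claim $H=G(f)$. Your product-diagram argument applied to $H$ and $G(f)$ produces the smallest subgroup $Z$ containing $H\cdot G(f)$ with $d_1(g|_Z)<d_1(g)$; since $G(f)\subseteq Z$ and $G(f)$ is maximal, this forces $Z=G(f)$ and hence $H\subseteq G(f)$ --- which you already knew --- but it does not force $H=Z$: maximality of $G(f)$ gives no lower bound on $H$, and nothing in the diagram rules out $H\subsetneq G(f)$. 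As written, ``maximality and uniqueness of $G(f)$ then force $Z=G(f)=H$'' is a non sequitur, and without $H=G(f)$ you only obtain $X=Hb$ with $Hb\subseteq G(f)b$, not the asserted equality $X=G(f)b$.

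The repair is exactly what the paper does, and you already have all the ingredients: since $f(b)b^{-1}\in X_e\subseteq H\subseteq G(f)$, your ``if'' computation shows that $G(f)b$ is $f$-invariant with $d_1(f|_{G(f)b})=d_1(g|_{G(f)})<d_1(f)$, so $G(f)b\in\Sigma_{f^\infty}$; then $X\subseteq Hb\subseteq G(f)b$ together with the maximality of $X$ in $\Sigma_{f^\infty}$ forces $X=G(f)b$ (and hence, a posteriori, $H=G(f)$). In short, the equality must come from the maximality of $X$, not from the maximality of $G(f)$.
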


\begin{proof}
Assume that $X \in S_f$.
Fix some $b\in X$ and denote by $X_e:=Xb^{-1}$ which contains $e\in G$.
Let $H$ be the smallest algebraic subgroup of $G$ containing $X_e$.
By Proposition \ref{prop-gen-d1}, $g(H)=H$, $f(Hb)=Hb$, and $d_1(g|_H)\le d_1(f|_X)<d_1(f)$.
Then $f(b)b^{-1}\in X_e\subseteq H\le G(g)$ by Lemma \ref{lem-unique}.
Note that 
$$f(G(f)b)=g(G(g))g(b)a=(G(g)f(b)b^{-1})b=G(f)b.$$
By Lemma \ref{lem-translation-d1}, 
$$d_1(f|_{G(f)b})=d_1(g|_{G(g)})<d_1(g)=d_1(f).$$
Since $X\subseteq Hb\subseteq G(f)b$, we have $X=G(f)b$ by the maximality of $X$.

Conversely,
assume that $X=G(f)b$ with $f(b)b^{-1} \in G(f)$.
Then $X \in \Sigma_f$ by the same argument above.
Suppose that we have $Y \in S_{f^\infty}$ containing $X$.
Take $s>0$ such that $Y \in S_{f^s}$.
Then $Y=G(f^s)c=G(f)c$ for some $c \in G$ with $f^s(c)c^{-1} \in G(f)$ by the 
previous paragraph.
But this implies that $X=Y$ since both are translations of the same group $G(f)$.
So $X$ is maximal.
\end{proof}

The following modified product formula will be frequently used in the next Theorem and also next section.

\begin{lemma}\label{lem-sf-induction}
Consider the commutative diagram of homomorphisms of connected algebraic groups:
\[
\xymatrix{
1 \ar[r] & B\ar[d]^{g|_B} \ar@{^{(}->}[r] & A \ar[r]^{\pi}\ar[d]^g & C\ar[r]\ar[d]^{g|_{C}}& 1\\
1 \ar[r] & B \ar@{^{(}->}[r] & A \ar[r]^{\pi}& C \ar[r]& 1
}
\]
where $g$ is an isogeny. Then $\pi$ is $f$-equivariant and the following hold.
\begin{enumerate}
\item  $d_1(g|_{\pi})=d_1(g|_B)$. In particular, $d_1(g)=\max\{d_1(g|_B), d_1(g|_C)\}$.
\item  Suppose $d_1(g|_B)<d_1(g)$. Then $A(f)=\pi^{-1}(C(f|_C))$ and $\pi$ induces a bijection between $S_f$ and $S_{f|_C}$.
\end{enumerate}
\end{lemma}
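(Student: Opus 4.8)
The plan is to first verify the easy structural facts and then reduce everything to Proposition \ref{prop-gen-d1} and Proposition \ref{prop-gp-maxsdd}. That $\pi$ is $f$-equivariant is immediate: $f = R_a \circ g$ on $A$ and $R_{\pi(a)} \circ g|_C$ on $C$ satisfy $\pi \circ f = \pi \circ R_a \circ g = R_{\pi(a)} \circ g|_C \circ \pi$ since $\pi$ is a homomorphism. For part (1), the relative dynamical degree $d_1(g|_\pi)$ is computed, after passing to an equivariant projective compactification, from the restriction of $g$ to a general fibre of $\pi$; a general fibre is a coset of $B$, and by the argument of Lemma \ref{lem-translation-d1} the restriction of $g$ to such a coset has the same dynamical degree as $g|_B$ (the coset is isomorphic to $B$ via translation, and conjugating $g$ by that translation changes $g|_B$ only by a translation, which does not affect $d_1$ by \cite[Lemma 5.6]{MS18}). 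Then $d_1(g) = \max\{d_1(g|_\pi), d_1(g|_C)\} = \max\{d_1(g|_B), d_1(g|_C)\}$ is just the product formula (Notation \ref{not-2.1}). Since $d_1(f) = d_1(g)$, $d_1(f|_C) = d_1(g|_C)$, etc., the same identities hold with $f$ in place of $g$.

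For part (2), assume $d_1(g|_B) < d_1(g)$, so $d_1(g) = d_1(g|_C)$ by part (1). First I would show $A(f) = \pi^{-1}(C(f|_C))$. Write $N := \pi^{-1}(C(f|_C))$; since $C(f|_C)$ is $g|_C$-invariant (Definition \ref{def-Gf}), $N$ is a $g$-invariant connected closed subgroup of $A$ containing $B$, and it fits into an exact sequence $1 \to B \to N \to C(f|_C) \to 1$ with $g$ acting compatibly. Applying part (1) to this sub-extension gives $d_1(g|_N) = \max\{d_1(g|_B), d_1(g|_{C(f|_C)})\}$; both terms are $< d_1(g)$ — the first by hypothesis, the second because $C(f|_C) \in S_{(g|_C)^\infty}$ is SDD and $d_1(g|_C) = d_1(g)$ — so $d_1(g|_N) < d_1(g)$, hence $N \in \Sigma_{g^\infty}$ and $N \subseteq A(f)$ by maximality of $A(f) \ni e$ (Lemma \ref{lem-unique}). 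Conversely $\pi(A(f))$ is a $g|_C$-invariant closed subgroup of $C$ containing $e$, and using the commutative-diagram/product-formula trick as in the proof of Lemma \ref{lem-unique} (the projection $A(f) \to \pi(A(f))$ is $g$-equivariant) one gets $d_1(g|_C|_{\pi(A(f))}) \le d_1(g|_{A(f)}) < d_1(g) = d_1(g|_C)$, so $\pi(A(f)) \in \Sigma_{(g|_C)^\infty}$ and hence $\pi(A(f)) \subseteq C(f|_C)$ by maximality; therefore $A(f) \subseteq \pi^{-1}(C(f|_C)) = N$. This gives $A(f) = N$.

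Finally, for the bijection $S_f \leftrightarrow S_{f|_C}$, I would use the explicit description in Proposition \ref{prop-gp-maxsdd}: members of $S_f$ are exactly the cosets $A(f)b$ with $f(b)b^{-1} \in A(f)$, and members of $S_{f|_C}$ are exactly the cosets $C(f|_C)\bar{c}$ with $(f|_C)(\bar c)\bar c^{-1} \in C(f|_C)$. The map $\pi$ sends $A(f)b \mapsto \pi(A(f))\pi(b) = C(f|_C)\pi(b)$, and $f|_C(\pi(b))\pi(b)^{-1} = \pi(f(b)b^{-1}) \in \pi(A(f)) = C(f|_C)$, so this lands in $S_{f|_C}$. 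For injectivity and surjectivity: given $C(f|_C)\bar c \in S_{f|_C}$, lift $\bar c$ to some $b_0 \in A$; then $f(b_0)b_0^{-1}$ maps into $C(f|_C)$ under $\pi$, i.e. lies in $\pi^{-1}(C(f|_C)) = A(f)$, so $A(f)b_0 \in S_f$ maps to $C(f|_C)\bar c$. Two cosets $A(f)b, A(f)b'$ mapping to the same coset of $C(f|_C)$ satisfy $b'b^{-1} \in \pi^{-1}(C(f|_C)) = A(f)$, hence are equal. I expect the one genuinely delicate point to be the fibrewise computation of $d_1(g|_\pi)$ in part (1) — making precise that a \emph{general} fibre of (a compactification of) $\pi$ behaves like a translate of $B$ and that translating does not change the dynamical degree — but this is exactly the kind of coset-translation argument already carried out in Lemma \ref{lem-translation-d1} and invoked from \cite[Lemma 5.6]{MS18}, so it should go through cleanly.
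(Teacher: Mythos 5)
Your proposal follows essentially the same route as the paper: part (1) is proved by passing to an equivariant projective compactification and identifying the relative dynamical degree with the dynamical degree on the ($g$-invariant) fibre $B$, and part (2) combines the two inclusions $\pi^{-1}(C(f|_C))\subseteq A(f)$ and $\pi(A(f))\subseteq C(f|_C)$ (each obtained from part (1) plus maximality of the $G(\cdot)$-subgroups) with the coset description of Proposition \ref{prop-gp-maxsdd}. The one ingredient you leave implicit in the "delicate point" of part (1) is the precise comparison theorem making $d_1(g|_\pi)$ computable on a fibre: the paper invokes \cite[Remark 3.4]{DN11}, whose hypotheses are verified exactly by your translation argument (all fibres of the compactified fibration are isomorphic, so no critical values) together with the observation that the closure of the invariant fibre $B$ is not contained in the critical set of the compactified self-map because $g$ is \'etale.
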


\begin{proof}
First, note that $\pi$ is $f$-equivariant with $f|_C=R_{\pi(a)}\circ g|_C$.

Let $A$ act on $A$ and $C$ by the right multiplication. 
By \cite[Theorem 2]{Bri10}, we have $A$-equivariant open embeddings to smooth projective varieties $A\hookrightarrow \overline{A}$ and $C\hookrightarrow \overline{C}$
(taking $A$-equivariant resolution if necessary).
Denote by $\overline{\pi}:\overline{A}\dashrightarrow \overline{C}$ the induced dominant rational map.
Denote by $\overline{g}:\overline{A}\dashrightarrow \overline{A}$ and $\overline{g}|_{\overline{C}}:\overline{C}\dashrightarrow \overline{C}$ the induced dominant self-maps.
Note that the graph of $\overline{\pi}$ is $A$-equivariant.
Replacing $\overline{A}$ by the $A$-equivariant resolution of this graph,
we may further assume that $\overline{\pi}$ is a well-defined algebraic fibration.

By the generic smoothness of $\overline{\pi}$, there exists a general point $c_1=\pi(a_1)\in C$ such that the fibre $\overline{A}_{c_1}:=\overline{\pi}^{-1}(c_1)$ is an (irreducible) smooth projective variety. 
For any $c_2=\pi(a_2)\in C$, we have $\overline{A}_{c_2}=\overline{A}_{c_1\cdot (c_1^{-1}c_2)}=\overline{A}_{c_1}\cdot (a_1^{-1}a_2)$.
So all the fibres of $\overline{\pi}$ over $C$ are (irreducible) smooth projective varieties isomorphic to each other.

We refer to \cite{DN11} for the definitions of critical value and critical set.
Now we have that each point of $C$ is not a critical value of $\overline{\pi}$. 
Note that $B$ is $g$-invariant and $g$ is \'etale.
Then $\overline{A}_e$ (the closure of $B$ in $\overline{A}$) is not contained in the critical set of $\overline{g}$.
Therefore, we verified the assumptions for $\overline{A}_e$ in \cite[Remark 3.4]{DN11} and hence $d_1(\overline{g}|_{\overline{\pi}})=d_1(\overline{g}|_{\overline{A}_e})=d_1(g|_B)$.
Together with the product formula, (1) is proved.

For (2), $d_1(g)=d_1(g|_C)$ by (1).

We apply the maximality and uniqueness of $A(g)$ and $C(g|_C)$ (cf.~Lemma \ref{lem-unique}) and our new product formula (1) several times in this diagram.
First, we have $B\le A(g)$.
Note that $d_1(g|_{\pi(A(g))})=d_1(g|_{A(g)})<d_1(g)$.
So we have $\pi(A(g))\le C(g|_C)$.
Conversely, $d_1(g|_{\pi^{-1}(C(g|_C))})=d_1((g|_C)|_{C(g|_C)})< d_1(g|_C) = d_1(g)$.
So we have $A(g)=\pi^{-1}(C(g|_C))$.

Finally, we apply Proposition \ref{prop-gp-maxsdd} for $f$ and $f|_C$,
then we see that the induced map 
$$\pi: S_f\to S_{f|_C}, \hskip 1pc \text{ via } \hskip 1pc A(f)b\mapsto C(f|_C)\pi(b)$$  is a bijection.
So (2) is proved.
\end{proof}

We complete this section by considering the commutative algebraic groups first.

\begin{theorem}\label{thm-gp}
Let $f:G\to G$ be a dominant self-morphism of a connected \textbf{commutative} algebraic group $G$, such that $f=R_a\circ g$ where $g$ is an isogeny and $a\in G$.
Suppose $d_1(f)>1$. 
Let $G(f)$ be as in Definition \ref{def-Gf}. Then
$$\pi:G\to \overline{G}:=G/G(f)$$ 
is $f,g$-equivariant and
$$0<\sharp S_f=\sharp S_g=\sharp  \Fix(\overline{f})=\sharp  \Fix(\overline{g})<\infty$$
where $\overline{f}=f|_{\overline{G}}=R_{\pi(a)}\circ\overline{g}$.
\end{theorem}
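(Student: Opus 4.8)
The plan is to reduce everything to counting fixed points of an isogeny-translation on the quotient $\overline{G}=G/G(f)$, where by construction $\overline{g}$ has no nontrivial periodic SDD subvariety through the identity. First I would observe that since $G$ is commutative and $G(f)=G(g)$ is the unique element of $S_{g^\infty}$ containing $e$, it is automatically a (connected closed) subgroup, hence normal, so the quotient $\overline{G}:=G/G(f)$ is a well-defined connected commutative algebraic group. The homomorphism $\pi:G\to\overline{G}$ is $g$-equivariant, and since $f=R_a\circ g$, it is $f$-equivariant with $\overline{f}=f|_{\overline G}=R_{\pi(a)}\circ\overline g$ where $\overline g:=g|_{\overline G}$ is again an isogeny (image of an isogeny under a quotient map). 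Since $G(g)\in\Sigma_{g^\infty}$ has $d_1(g|_{G(g)})<d_1(g)$, Lemma~\ref{lem-sf-induction}\,(1) applied to $1\to G(f)\to G\to\overline G\to 1$ gives $d_1(\overline g)=d_1(g)>1$, and likewise $d_1(\overline f)=d_1(\overline g)>1$; in particular $\overline G$ is positive-dimensional and the quantities in question are a priori meaningful.

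Next I would identify $S_f$ and $S_g$ with fixed-point sets downstairs. By Proposition~\ref{prop-gp-maxsdd}, $X\in S_f$ if and only if $X=G(f)b$ with $f(b)b^{-1}\in G(f)$; the map $X=G(f)b\mapsto \pi(b)\in\overline G$ is well-defined (independent of the coset representative $b$, since $G(f)$ is the kernel of $\pi$) and its image consists exactly of those $\pi(b)$ with $\overline f(\pi(b))=\pi(f(b))=\pi(b)$, i.e.\ of $\Fix(\overline f)$; and it is injective because two members of $S_f$ are translations of the same subgroup $G(f)$ and agree as soon as some representatives do. This is precisely the bijection $S_f\xrightarrow{\ \sim\ }S_{\overline f|_{\overline G}}$ furnished abstractly by Lemma~\ref{lem-sf-induction}\,(2) (whose hypothesis $d_1(g|_{G(f)})<d_1(g)$ holds), combined with the observation that on $\overline G$ there is no positive-dimensional periodic SDD subgroup through $e$ — so $\overline G(\overline g)=\{e\}$ and a member of $S_{\overline f}$ is just a point $c$ with $\overline f(c)=c$. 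Hence $\sharp S_f=\sharp\Fix(\overline f)$, and the identical argument with $a=e$ (so $f=g$, $\overline f=\overline g$) gives $\sharp S_g=\sharp\Fix(\overline g)$.

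It remains to see that all four cardinalities coincide and are finite and nonzero. Finiteness: $\overline f=R_{\pi(a)}\circ\overline g$ is an isogeny-translation of $\overline G$, so its fixed-point set is the translate of $\ker(\overline g - \operatorname{id})$ (in the commutative group $\overline G$, writing the group law additively, $\overline f(c)=c \iff (\overline g-\operatorname{id})(c)=-\pi(a)$); since $d_1(\overline g)=d_1(g)>1$, Hu's formula $\chi_2=d_1$ and more concretely the fact that $\overline g-\operatorname{id}$ is again an isogeny (its differential at $e$ has no eigenvalue $1$ because all eigenvalues of $(\overline g)^*$ on $H^1$ have modulus $\sqrt{d_1(g)}>1$ by Theorem~\ref{main-thm-all}\,(3), or directly because $d_1(\overline g|_{\ker(\overline g-\mathrm{id})^\circ})$ would be $1$, contradicting maximality of $\overline G(\overline g)=\{e\}$) forces $\ker(\overline g-\operatorname{id})$ to be finite, hence $\Fix(\overline g)$ and its coset $\Fix(\overline f)$ are finite and nonempty. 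Nonzero: $e\in\Fix(\overline g)$, so $\sharp\Fix(\overline g)\ge 1$; and $\Fix(\overline f)$ is a coset of the finite group $\Fix(\overline g)$, so $\sharp\Fix(\overline f)=\sharp\Fix(\overline g)\ge 1$. Putting this together yields $0<\sharp S_f=\sharp S_g=\sharp\Fix(\overline f)=\sharp\Fix(\overline g)<\infty$. The main obstacle I anticipate is verifying cleanly that $\overline g-\operatorname{id}$ is an isogeny — equivalently that $\overline g$ has no fixed subgroup of positive dimension — which is exactly where one must use that $\overline G(\overline g)=\{e\}$ (a translate/descent of $G(f)$ being the full unique maximal SDD subgroup), rather than any polarization hypothesis; everything else is formal manipulation of the commutative diagram and the already-established Propositions.
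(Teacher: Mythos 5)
Your proposal is correct and follows essentially the same route as the paper: pass to $\overline{G}=G/G(f)$, use Proposition~\ref{prop-gp-maxsdd} and Lemma~\ref{lem-sf-induction} to identify $S_f$ with $\Fix(\overline{f})$, rule out a positive-dimensional pointwise-fixed subgroup of $\overline{g}$ by the maximality of $G(g)$, and conclude via the isogeny $\overline{x}\mapsto \overline{g}(\overline{x})\cdot\overline{x}^{-1}$ that $\Fix(\overline{f})$ is a nonempty finite coset of $\Fix(\overline{g})$. Only note that your first suggested justification that $\overline{g}-\operatorname{id}$ is an isogeny (via Theorem~\ref{main-thm-all}\,(3)) would require $\overline{g}$ to be polarized, which is not given; your alternative argument via the maximality of $\overline{G}(\overline{g})=\{e\}$ is the correct one and is exactly what the paper does.
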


\begin{proof}
We simply define $\overline{f}(x\cdot G(f)):=f(x)\cdot G(f)$.
It is well-defined and $\overline{f}\circ\pi=\pi\circ f$ since $g(G(f))=G(f)$.
By Lemma \ref{lem-sf-induction} and
Proposition \ref{prop-gp-maxsdd}, 
$S_{\overline f} = \Fix(\overline{f})$ and
$\pi$ induces a bijection between $S_f$ and $\Fix(\overline{f})$.

Consider the case $\sharp \Fix(\overline{g})=\infty$.
Then there is a positive dimensional (connected closed) algebraic subgroup $\overline{H}$ of $\overline{G}$ such that $\overline{g}|_{\overline{H}}=\id_{\overline{H}}$.
Let $H:=\pi^{-1}(\overline{H})$ which is a $g$-invariant (connected closed) algebraic subgroup of $G$.
By the product formula in Lemma \ref{lem-sf-induction}, we have
$$d_1(g|_H)=\max\{d_1(g|_{\pi}), d_1(\overline{g}|_{\overline{H}})\}=d_1(g|_{\pi})=d_1(g|_{G(g)})<d_1(g).$$
This contradicts the maximality of $G(g)$.

Now we have shown $\sharp\Fix(\overline{g})<\infty$.
Then $\phi : \overline{G} \to \overline{G}$ ($\overline{x} \mapsto g({\overline x}) \cdot {\overline x}^{-1}$)
is an isogeny.
Note that $\overline f(\overline x)=\overline x$ if and only if 
$\phi(\overline x)=\overline{a^{-1}}$.
So we have $$0< \sharp\Fix(\overline{f}) = 
\sharp \,\phi^{-1}(\overline{a^{-1}}) =
\sharp \,\phi^{-1}(\overline{e}) = 
\sharp \Fix(\overline{g})
<\infty$$ 
and the theorem is proved.
\end{proof}

\section{Proof of Theorem \ref{main-thm-gp}}\label{sec-5}

In this section, we focus on the upper bound in Question \ref{Q2}.
We first consider the case of abelian varieties.
\begin{theorem}\label{thm-ab}
Let  $f$ be a surjective endomorphism of an abelian variety $A$.
Then
$$\sharp S_f\le (\sqrt{d_1(f)}+1)^{2(\dim (A)-\dim (A(f)))}$$
(cf.~Definition \ref{def-Gf} for $A(f)$).
\end{theorem}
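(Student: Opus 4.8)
The plan is to reduce the count to the number of fixed points of the induced isogeny on the quotient abelian variety $\overline{A}:=A/A(f)$, and then to estimate that number through the eigenvalues on an $\ell$-adic Tate module. We may assume $d_1(f)>1$, since otherwise $\Sigma_{f^{\infty}}=\emptyset$ and there is nothing to prove. Any surjective endomorphism $f$ of the abelian variety $A$ is of the form $f=R_a\circ g$ with $g$ an isogeny and $a\in A$ (remove the translation $a=f(e)$), so Theorem~\ref{thm-gp} applies: the quotient map $\pi:A\to\overline{A}=A/A(f)$ is $g$-equivariant and
\[
\sharp S_f=\sharp\Fix(\overline{g}),
\]
where $\overline{g}:=g|_{\overline{A}}$ is an isogeny of the abelian variety $\overline{A}$ of dimension $m:=\dim A-\dim A(f)$. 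Note $m\ge 1$ whenever $d_1(f)>1$, since $A$ itself is never SDD.

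Next I would consider the homomorphism $\phi:\overline{A}\to\overline{A}$, $\overline{x}\mapsto\overline{g}(\overline{x})\,\overline{x}^{-1}$ (in additive notation, $\phi=\overline{g}-\id$). By Theorem~\ref{thm-gp}, $\Fix(\overline{g})=\ker\phi$ is finite, so $\phi$ has finite kernel and is therefore an isogeny (as in the proof of Theorem~\ref{thm-gp}). Hence
\[
\sharp S_f=\sharp\Fix(\overline{g})=\sharp\ker\phi\le\deg\phi,
\]
with equality in characteristic zero; in positive characteristic the left-hand side is the number of geometric points of $\ker\phi$, which is still at most $\deg\phi$, and that is all that is needed.

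Now I would compute $\deg\phi$ through the characteristic-polynomial formalism for endomorphisms of abelian varieties (Mumford). Let $\lambda_1,\dots,\lambda_{2m}$ be the eigenvalues of $\overline{g}$ acting on the rational Tate module $V_\ell\overline{A}$; these coincide, up to transpose, with the eigenvalues of $\overline{g}^{\,*}$ on $H^1_{\textup{\'et}}(\overline{A},\Q_\ell)$, hence have the same absolute values. The characteristic polynomial $P_{\overline{g}}(t)=\prod_{i=1}^{2m}(t-\lambda_i)\in\Z[t]$ satisfies $\deg\phi=P_{\overline{g}}(1)=\prod_{i=1}^{2m}(1-\lambda_i)$, a nonnegative integer, so
\[
\sharp\Fix(\overline{g})\le\prod_{i=1}^{2m}|1-\lambda_i|\le\prod_{i=1}^{2m}\bigl(1+|\lambda_i|\bigr)\le\bigl(1+\rho(\overline{g}^{\,*}|_{H^1_{\textup{\'et}}(\overline{A},\Q_\ell)})\bigr)^{2m}=\bigl(1+\chi_1(\overline{g})\bigr)^{2m}.
\]
Since $\overline{A}$ is an abelian variety, $\chi_1(\overline{g})^2=d_1(\overline{g})$ (cf.\ Notation~\ref{not-2.1}, using Hu's characteristic-free computation), and $d_1(\overline{g})\le d_1(g)=d_1(f)$ by the product formula for the $g$-equivariant surjection $\pi$. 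Hence $\chi_1(\overline{g})\le\sqrt{d_1(f)}$ and
\[
\sharp S_f=\sharp\Fix(\overline{g})\le\bigl(\sqrt{d_1(f)}+1\bigr)^{2m}=\bigl(\sqrt{d_1(f)}+1\bigr)^{2(\dim A-\dim A(f))},
\]
as claimed.

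\textbf{Expected main obstacle.} The genuinely delicate points are all characteristic-free bookkeeping: that $\deg(\overline{g}-\id)$ equals the value at $1$ of the degree-$2m$ Tate-module characteristic polynomial of $\overline{g}$, that $\sharp\ker\le\deg$ correctly absorbs possible inseparability of $\overline{g}-\id$, and that the spectral radius of $\overline{g}^{\,*}$ on $H^1_{\textup{\'et}}(\overline{A},\Q_\ell)$ is exactly $\sqrt{d_1(\overline{g})}$ — the last being precisely Hu's input recalled in the preliminaries. Once Theorem~\ref{thm-gp} (hence Lemmas~\ref{lem-unique} and~\ref{lem-sf-induction}) is granted, the remainder of the argument is formal.
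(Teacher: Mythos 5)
Your proposal is correct and follows essentially the same route as the paper: reduce via Theorem~\ref{thm-gp} to counting $\Fix(\overline{g})$ on $\overline{A}=A/A(f)$, identify that count with $\deg(\overline{g}-\id)$, and bound it by $\prod_i(1+|\lambda_i|)\le(1+\chi_1(\overline{g}))^{2m}$ using Hu's identity $\chi_1^2=d_1$ together with the product formula. The only cosmetic difference is that you evaluate the Tate-module characteristic polynomial at $1$ where the paper computes $\deg\phi=|\det(\phi^*|_{H^1_{\textup{\'et}}})|$ via $\chi_{2m}$; these are the same calculation.
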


\begin{proof}
We may assume $d_1(f)>1$.
Write $f=R_a\circ g$ where $g$ is an isogeny and $a\in A$.
Since $d_1(f) = d_1(g)$ and $A(f) = A(g)$,
applying Theorem \ref{thm-gp}, we may assume that $f$ is an isogeny, i.e., $f = g$; further we have an $f$-equivariant quotient morphism
$$\pi:A\to A':=A/A(f)$$
such that $0<\sharp S_f=\sharp\Fix(\overline{f})<\infty$
where $f'=f|_{A'}$.
Denote by $\phi:A'\to A'~(x'\mapsto f'(x')\cdot x'^{-1})$ (see the ending part of the proof of Theorem \ref{thm-gp}), which is an isogeny.
Then $$\sharp\Fix(f')=\deg(\phi).$$ 
Set $m:=\dim (A')=\dim(A)-\dim(A(f))$.
We compute 
\begin{align*}
\deg(\phi) &= \chi_{2m}(\phi) \\
&= \rho (\phi^*|_{H^{2m}_{\textup{\'et}}(A',\mathbb{Q}_{\ell})}) \\
&= \rho \left((\wedge^{2m}\phi^*)|_{\wedge^{2m}H_{\textup{\'et}}^{1}(A',\mathbb{Q}_{\ell})}\right) \\
&= | \det(\phi^*|_{H_{\textup{\'et}}^1(A',\mathbb{Q}_{\ell})}) |.
\end{align*}
Note that $\phi^*|_{H^1_{\textup{\'et}}(A',\mathbb{Q}_{\ell})}=f'^*|_{H^1_{\textup{\'et}}(A',\mathbb{Q}_{\ell})}-\id_{H^1_{\textup{\'et}}(A',\mathbb{Q}_{\ell})}$ 
(cf.~\cite[Theorem 2.2]{Hu19}).
So, letting $\alpha_1,\alpha_2,\ldots,\alpha_{2m}$ be the eigenvalues of 
$f'^*|_{H^1_{\textup{\'et}}(A',\Q_{\ell})}$, 
we have 
$$\deg(\phi)=\prod_{i=1}^{2m}|\alpha_i - 1| \leq (\chi_1(f')+1)^{2m}.$$
Note that $\chi_1(f')^2=d_1(f')\le d_1(f)$ by \cite[Formula (3.6)]{Hu19} and the product formula. 
Then the theorem follows.
\end{proof}

Recall that a normal projective variety $X$ is said to be \textit{$Q$-abelian} (cf.~\cite{NZ10}) if there is a finite surjective morphism $A\to X$ from an abelian variety $A$, which is \'etale in codimension one.

\begin{corollary}\label{Q_abelian_case}
$S_f$ is finite for any surjective endomorphism $f$ of a $Q$-abelian variety.
\end{corollary}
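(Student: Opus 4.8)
The plan is to reduce the statement to Theorem \ref{thm-ab} via the defining finite morphism of a $Q$-abelian variety. Let $f:X\to X$ be a surjective endomorphism of a $Q$-abelian variety $X$, and let $\tau:A\to X$ be a finite surjective morphism from an abelian variety $A$ which is \'etale in codimension one, with $d:=\deg\tau$. The first step is to lift a suitable iterate of $f$ to $A$: by the results on $Q$-abelian varieties (the argument is standard, going back to \cite{NZ10}, using that $\tau$ factors the universal cover and that $f$ lifts after a base change by an isogeny of $A$), there is an integer $m>0$ and a surjective endomorphism $h:A\to A$ with $\tau\circ h=f^{m}\circ\tau$. Moreover $h$ is automatically of the form $R_a\circ g$ with $g$ an isogeny, so $d_1(h)=d_1(g)$ and Theorem \ref{thm-ab} applies to $h$, giving $\sharp S_{h}<\infty$.

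Next I would invoke Lemma \ref{lem_invariant_finite_iff} with the finite morphism $\tau:A\to X$ and the pair of endomorphisms $h$ on $A$ and $f^{m}$ on $X$ (so $\tau\circ h=f^{m}\circ\tau$). Since $\sharp S_{h^{(d!)}}\le\sharp S_{h}\cdot(\text{something finite})$ — more precisely, one should first replace $h$ by the iterate $h^{d!}$, note that $h^{d!}$ is again of the form $R_{a'}\circ g'$ with $g'$ an isogeny so Theorem \ref{thm-ab} still gives $\sharp S_{h^{d!}}<\infty$ — the hypothesis of the first implication in Lemma \ref{lem_invariant_finite_iff} is met, and we conclude $\sharp S_{f^{m}}<\infty$.

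It remains to pass from finiteness of $S_{f^{m}}$ back to finiteness of $S_{f}$. This is where a small amount of care is needed: an element of $S_{f}$ is $f$-invariant, hence $f^{m}$-invariant, hence lies in some element of $S_{f^{m}}$ (being maximal among $f^{m}$-periodic SDD subvarieties). Conversely a member of $S_{f^{m}}$ need not be $f$-invariant, but $f$ permutes the finite set $S_{f^{m}}$ (since $f$ sends $f^{m}$-periodic SDD subvarieties to $f^{m}$-periodic SDD subvarieties, preserving maximality because $d_1$ is unchanged under $\overline{f(\cdot)}$, cf.\ the diagram in the proof of Lemma \ref{lem-unique}), so each $Y\in S_{f}$ is contained in an element of the finite set $S_{f^{m}}$; since distinct elements of $S_{f}$ cannot be contained in the same member of $S_{f^{m}}$ (two members of $S_{f^{m}}$ containing a common $S_f$-element would force that element to be the whole member by maximality in $\Sigma_{f^\infty}\supseteq\Sigma_{f^{m\infty}}$), we get $\sharp S_{f}\le\sharp S_{f^{m}}<\infty$.

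The main obstacle is the lifting step: one must verify that, possibly after replacing $f$ by an iterate, $f$ genuinely lifts to an endomorphism $h$ of the abelian variety $A$ compatible with $\tau$, and that $h$ has the required special form $R_a\circ g$ so that Theorem \ref{thm-ab} is applicable. This uses the rigidity of $Q$-abelian varieties (the morphism $\tau$ is, up to the Galois action of a finite group, the Albanese-type universal cover, and endomorphisms descend/ascend along it after a controlled base change); the bookkeeping of which iterate is needed is routine but must be done honestly. Everything downstream — Lemma \ref{lem_invariant_finite_iff} and the elementary iterate-comparison for $S_{(\cdot)}$ — is then formal.
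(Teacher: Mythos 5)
Your proposal is correct and follows essentially the same route as the paper: the paper invokes the Albanese closure $\pi:A\to X$ of \cite[Lemma 8.1]{CMZ20}, along which $f$ itself (not merely an iterate) lifts to a surjective endomorphism of the abelian variety $A$, and then concludes directly from Lemma \ref{lem_invariant_finite_iff} and Theorem \ref{thm-ab}. Your extra step comparing $S_f$ with $S_{f^m}$ is correct but becomes unnecessary with that canonical cover; note in any case that $S_f\subseteq S_{f^m}$ holds immediately, since $\Sigma_{f^\infty}=\Sigma_{(f^m)^\infty}$ and $f$-invariance implies $f^m$-invariance.
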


\begin{proof}
By taking the Albanese closure $\pi:A\to X$ (cf.~\cite[Lemma 8.1]{CMZ20}), $f$ lifts to a surjective endomorphism $f_A:A\to A$ of an abelian variety $A$. Then our corollary follows from Lemma \ref{lem_invariant_finite_iff} and Theorem \ref{thm-ab}.
\end{proof}

Next we treat the semi-abelian varieties.
Let $f:X\to X$ be a (not necessarily dominant) self-morphism of a semi-abelian variety $X$.
Then $R_{f(e)^{-1}}\circ f$ is a group homomorphism of $X$ (cf.~\cite[Theorem 2]{Itk76}).

When $f$ is a group homomorphism,
it induces the following commutative diagram
where $T$ is an algebraic torus and $A = \Alb(X)$ is an abelian variety:
\[
\xymatrix{
0 \ar[r] & T\ar[d]^{f|_T} \ar@{^{(}->}[r] & X \ar[r]\ar[d]^f & A \ar[r]\ar[d]^{f|_A}& 0\\
0 \ar[r] & T \ar@{^{(}->}[r] & X \ar[r]& A \ar[r]& 0
}
\]

\begin{thm}\label{thm-semi}
Let $f$ be a dominant self-morphism of a semi-abelian variety $X$. 
Then
\[
\sharp S_f \leq ( \sqrt{d_{1}(f)}+1)^{2(\dim (X)-\dim(X(f)))}.
\]
\end{thm}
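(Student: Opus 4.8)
The plan is to reduce the semi-abelian case to the abelian case (Theorem~\ref{thm-ab}) by an induction on dimension along the defining extension $0 \to T \to X \to A \to 0$, exactly as the ``modified product formula'' Lemma~\ref{lem-sf-induction} is designed to facilitate. First I would write $f = R_a \circ g$ with $g$ a group homomorphism and $a = f(e) \in X$ (legitimate by \cite[Theorem 2]{Itk76}), and note that $d_1(f) = d_1(g)$ and $X(f) = X(g)$, so it suffices to treat the isogeny $g$. The homomorphism $g$ respects the canonical extension, giving the commutative diagram displayed just before the statement, with $g|_T$ an isogeny of the torus $T$ and $g|_A$ an isogeny of $A = \Alb(X)$. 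By the product formula part of Lemma~\ref{lem-sf-induction}(1), $d_1(g) = \max\{d_1(g|_T), d_1(g|_A)\}$; and since $T = (k^*)^r$ is a torus, $d_1(g|_T)$ equals the spectral radius of an integer matrix in $\operatorname{GL}_r(\Z)$, hence is at least $1$ — but if $d_1(g|_T) = d_1(g) > 1$ I still need to handle it, so the dichotomy I actually want to exploit is whether the relative degree $d_1(g|_\pi) = d_1(g|_T)$ equals $d_1(g)$ or not.

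The clean case is $d_1(g|_T) < d_1(g)$ (equivalently $d_1(g) = d_1(g|_A)$): then Lemma~\ref{lem-sf-induction}(2) gives $X(g) = \pi^{-1}(A(g|_A))$ and a bijection $S_g \cong S_{g|_A}$, so by Theorem~\ref{thm-ab} applied to $g|_A$ on $A$,
\[
\sharp S_g = \sharp S_{g|_A} \le (\sqrt{d_1(g|_A)}+1)^{2(\dim A - \dim A(g|_A))} = (\sqrt{d_1(g)}+1)^{2(\dim X - \dim X(g))},
\]
using $\dim X - \dim X(g) = \dim A - \dim(X(g)/T) = \dim A - \dim A(g|_A)$ (the last equality because $X(g) = \pi^{-1}(A(g|_A))$ so $X(g)/T \cong A(g|_A)$). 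The remaining case is $d_1(g|_T) = d_1(g) > 1$. Here I would instead quotient by the torus direction: set $A' = X/T$ — wait, that is just $A$ again and is not $g|_T$-related — so more carefully, I would use that a subtorus or sub-semiabelian variety realizing the dynamical degree on $T$ can be pushed into $X(g)$. Concretely, since $g|_T$ is an isogeny of a torus with $d_1(g|_T) = d_1(g)$, the maximality/uniqueness of $T(g|_T)$ inside $T$ (Definition~\ref{def-Gf} applied to $g|_T$) together with $d_1(g|_T|_{T(g|_T)}) < d_1(g|_T) = d_1(g)$ and the product formula forces $T(g|_T) \le X(g)$; then I replace $X$ by $X / T(g|_T)$, which is again semi-abelian of strictly smaller dimension with $X(g)$ mapping to the corresponding $X'(g')$ and $S_g$ in bijection with $S_{g'}$ (again via Lemma~\ref{lem-sf-induction}(2), now with $B = T(g|_T)$), and induct on $\dim X$. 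The base of the induction is $\dim X(g) = \dim X$, where $S_g = \{X\}$ and the bound reads $\sharp S_g \le 1$, true since $S_g$ is a single translate of $X(g) = X$.

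The step I expect to be the main obstacle is justifying $d_1(g|_\pi) = d_1(g|_T)$ and that the relevant fiber (the torus $T$, or its closure in a smooth equivariant compactification) avoids the critical set of $\overline{g}$, so that Lemma~\ref{lem-sf-induction}(1) genuinely applies — this is where the étaleness of the isogeny $g$ and the equivariant compactification $X \hookrightarrow \overline{X}$ of \cite[Theorem 2]{Bri10}, plus \cite[Remark 3.4]{DN11}, must be invoked exactly as in the proof of Lemma~\ref{lem-sf-induction}; everything downstream is bookkeeping with the exponents $\dim X - \dim X(g)$. A secondary subtlety is the inductive case $d_1(g|_T) = d_1(g)$: one must check that after quotienting by $T(g|_T)$ the hypotheses of Lemma~\ref{lem-sf-induction}(2) are met, i.e.\ that the relative dynamical degree of $g$ over $X/T(g|_T)$ is strictly smaller than $d_1(g)$, which holds because that relative degree equals $d_1(g|_{T(g|_T)}) < d_1(g)$. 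Once these are in place, the displayed inequality follows by combining Theorem~\ref{thm-ab} with the dimension count, completing the induction.
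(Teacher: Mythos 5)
Your reduction to the isogeny $g$ (via Theorem \ref{thm-gp}) and your Case 1, where $d_1(g|_T)<d_1(g)$, are both sound: there Lemma \ref{lem-sf-induction}\,(2) with $B=T$ does give $X(g)=\pi^{-1}(A(g|_A))$ and a bijection $S_g\cong S_{g|_A}$, and the exponent bookkeeping $\dim X-\dim X(g)=\dim A-\dim A(g|_A)$ is correct. The genuine gap is in Case 2. Your induction step replaces $X$ by $X/T(g|_T)$ and asserts that this has \emph{strictly smaller} dimension, but $T(g|_T)$ can perfectly well be trivial: if, say, $g|_T$ is the $q$-th power map with $q=d_1(g)$, then every nontrivial $g|_T$-periodic subtorus again has dynamical degree $q$, so $T(g|_T)=\{e\}$ and the quotient is $X$ itself. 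A concrete instance is $X=\mathbb{G}_m\times A$ with $g=(t\mapsto t^{q^2})\times(q)_A$, where $d_1(g|_T)=d_1(g|_A)=q^2$ and $T(g|_T)=\{e\}$, indeed $X(g)=\{e\}$. In that situation your induction makes no progress, and there is no positive-dimensional subgroup $B\le T$ with $d_1(g|_B)<d_1(g)$ to which Lemma \ref{lem-sf-induction}\,(2) could be applied; so the strategy of collapsing the torus direction cannot be repaired within your framework.

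The paper avoids the dichotomy altogether. It first quotients by $X(f)$ itself (legitimate by Theorem \ref{thm-gp}), reducing the problem to counting $\Fix(\overline{g})$ on $\overline{X}=X/X(f)$, where $\phi(x)=\overline{g}(x)\cdot x^{-1}$ is an isogeny and $\sharp\Fix(\overline{g})=\deg\phi$. It then uses multiplicativity of $\deg\phi$ along $0\to T\to\overline{X}\to A\to 0$, namely $\sharp\Fix(\overline{g})=\sharp\Fix(\overline{g}|_T)\cdot\sharp\Fix(\overline{g}|_A)$, bounding the torus factor by $|\det(M-\id)|\le(d_1(g|_T)+1)^{\dim T}\le(\sqrt{d_1(g)}+1)^{2\dim T}$ (with $M$ the integer matrix defining $\overline{g}|_T$) and the abelian factor by the eigenvalue estimate from the proof of Theorem \ref{thm-ab}. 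The point is that when the torus carries the full dynamical degree its contribution to the fixed points must be \emph{counted} rather than quotiented away, and the elementary inequality $d+1\le(\sqrt{d}+1)^2$ is exactly what lets that contribution absorb into the stated bound. Your Case 2 should be replaced by this fixed-point computation.
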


\begin{proof}
By the same argument as in the first paragraph of the proof of Theorem \ref{thm-ab}, 
we may replace $f$ by $g$ and $X$ by $X/X(f)$ so 
that $\dim(X(f))=0$ and both $f$ and $\phi:X\to X ~(x\mapsto f(x)\cdot x^{-1})$ are isogenies.

Consider the commutative diagram before Theorem \ref{thm-semi}. 
Note that $f|_T$ is defined by a matrix $M$ of size $\dim(T)$ with integer coefficients and a non-zero determinant.
Then $d_{1}(f|_T)$ is equal to the spectral radius of $M$ (cf.~\cite[Theorem 1.1]{Lin12}).
Let $\lambda_1,\cdots,\lambda_{\dim(T)}$ be the diagonal entries of the Jordan canonical form of $M$.

We compute
\begin{align*}
\sharp \Fix(f|_T) &= \deg (\phi|_T)= \det (M-\id) \\
&= ( \lambda_{1}-1)\cdots ( \lambda_{\dim (T)}-1)   \\
&\leq (d_{1}(f|_T)+1)^{\dim (T)}.
\end{align*}
Together with the computation in the proof of Theorem \ref{thm-ab} for abelian varieties, we have
\begin{align*}
\sharp\Fix(f) &= \sharp\Fix(f|_T)\cdot \sharp \Fix(f|_A) \\
&\le (d_{1}(f|_T)+1)^{\dim (T)} \cdot (\sqrt{d_1(f|_A)}+1)^{2\dim(A)} \\
&\le ( \sqrt{d_{1}(f)}+1)^{2\dim (X)}
\end{align*}
where the last inequality follows from $d_1(f|_T)\le d_1(f)$ and $d_1(f|_A)\le d_1(f)$; see e.g. \cite[Theorem 1.6]{MS18} or Lemma \ref{lem-sf-induction}. 
So the theorem is proved.
\end{proof}

\begin{remark}\label{rmk-semi}
In the proof of Theorem \ref{thm-semi}, we see that if $X=T$ is an algebraic torus, 
then the computation indeed shows that $\sharp S_f\le (d_1(f)+1)^{\dim(T)-\dim(T(f))}$.
\end{remark}

We now prepare to deal with general algebraic groups.
See, for instance, \cite[\S 3]{MZ18a} for several decomposition facts about algebraic groups.
First, we need observations for unipotent groups and semisimple groups below.

\begin{lemma}\label{lem-unipotent}
Let $g:U\to U$ be an isogeny of a (connected) unipotent group $U$.
Then $d_1(g)=1$.
\end{lemma}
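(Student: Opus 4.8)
The statement is that any isogeny $g\colon U\to U$ of a connected unipotent group has $d_1(g)=1$. Since we are in characteristic zero, a connected unipotent group is isomorphic as a variety to affine space $\mathbb{A}^n$, and more importantly admits a central series $1=U_0\lhd U_1\lhd\cdots\lhd U_r=U$ by connected subgroups with each successive quotient $U_i/U_{i-1}$ isomorphic to a vector group $\mathbb{G}_a^{m_i}$. The plan is to induct on $\dim U$ using this filtration together with the product formula (Lemma \ref{lem-sf-induction}, or the product formula recalled in Notation \ref{not-2.1}).

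\textbf{Key steps.} First I would handle the base case $U=\mathbb{G}_a^m$ a vector group: an isogeny of $\mathbb{G}_a^m$ is a surjective group homomorphism with finite kernel, but in characteristic zero $\mathbb{G}_a^m$ has no nontrivial finite subgroups (it is torsion-free and divisible), so $g$ is an automorphism, given by an element of $\mathrm{GL}_m(k)$; its pullback on $\mathrm{N}^1$ of any equivariant completion can be analyzed directly, but more simply $g$ is an automorphism of the variety $\mathbb{A}^m$, hence of its natural completion $\mathbb{P}^m$ acting as a projective linear map, so $d_1(g)=1$. Alternatively one invokes that an automorphism has dynamical degree $1$. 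Second, for the inductive step, take a connected normal subgroup $B\lhd U$ with $B$ central (or just the last term of the lower central series) so that $U/B$ has smaller dimension; since $g$ is an isogeny and $B$ is characteristic (or $g$-invariant — one must check $g(B)=B$, which holds because $g$ is finite surjective and $B$ is e.g. the commutator subgroup or a characteristic subgroup of the central series), $g$ descends to an isogeny $\overline{g}$ on $U/B$ and restricts to an isogeny on $B$. Third, apply the product formula to the equivariant fibration $U\to U/B$: $d_1(g)=\max\{d_1(g|_B),\,d_1(\overline g)\}$ — here I would cite Lemma \ref{lem-sf-induction}(1) or the product formula directly, noting that $B$ is itself unipotent and $U/B$ is unipotent of smaller dimension, so by induction both are $1$, whence $d_1(g)=1$.

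\textbf{Main obstacle.} The delicate point is making sure the chosen normal subgroup $B$ is genuinely $g$-invariant so that $g$ descends and restricts — this is not automatic for an arbitrary isogeny, but it is guaranteed if $B$ is taken to be a characteristic subgroup, e.g. a term of the descending central series $C^i(U)$ or the ascending central series, since any abstract group automorphism, and in particular any injective endomorphism with image of finite index (which $g$, being a finite surjective homomorphism of a connected group, is — in fact $g$ is surjective here), preserves such subgroups. One should also confirm that $d_1$ is a birational invariant so that passing to the equivariant smooth projective completion $\widetilde U$ (which exists by \cite[Theorem 2]{Bri10}) is harmless, and that the relative dynamical degree along the fibration $\widetilde U\to\widetilde{U/B}$ really equals $d_1(g|_B)$ — this last identification is exactly the content of Lemma \ref{lem-sf-induction}(1) applied in the unipotent setting, or follows from the general theory of dynamical degrees of fibrations \cite{DS05, Dan19}. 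Once these bookkeeping points are in place the induction closes immediately.
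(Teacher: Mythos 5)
Your proof is correct and follows essentially the same route as the paper's: induction on $\dim U$ via a $g$-invariant characteristic subgroup (the paper uses the commutator subgroup $U^{(1)}$, which is proper since $U$ is solvable) combined with the product formula of Lemma \ref{lem-sf-induction}, reducing to a vector group on which the automorphism $g$ acts linearly; the paper settles that base case via Brion's equivariant completion and the triviality of the connected group $\GL(U)$ on $\NS/(\mathrm{torsion})$, while your explicit completion to $\mathbb{P}^m$ works just as well. The only point to delete is the aside ``alternatively one invokes that an automorphism has dynamical degree $1$,'' which is false in general (e.g.\ positive-entropy automorphisms of abelian surfaces); your primary argument does not rely on it.
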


\begin{proof}
Since $\Ker(g)$ is a finite subgroup of the unipotent
group $U$, it is trivial, so $g$ is an automorphism.
We show by induction on $\dim(U)$ and assume $\dim(U)>0$.
Note that $U$ is solvable.
So its commutator subgroup $U^{(1)}$ is strictly smaller than $U$.
Note that $g(U^{(1)})=U^{(1)}$.
Consider the following commutative diagram:
\[
\xymatrix{
1 \ar[r] & U^{(1)}\ar[d]^{g|_{U^{(1)}}} \ar@{^{(}->}[r] & U \ar[r]\ar[d]^g & U/U^{(1)} \ar[r]\ar[d]^{g|_{U/U^{(1)}}}& 1\\
1 \ar[r] & U^{(1)} \ar@{^{(}->}[r] & U \ar[r]& U/U^{(1)} \ar[r]& 1
}
\]

Suppose $U^{(1)}$ is not trivial. 
By Lemma \ref{lem-sf-induction} and induction, we have 
$$d_1(g)=\max\{d_1(g|_{U^{(1)}}), d_1(g|_{{U/U^{(1)}}})\}=1.$$

Suppose $U^{(1)}$ is trivial. 
Then $U$ is a commutative unipotent group which is isomorphic to $k^{\times {\dim(U)}}$.
One checks that $g\in \GL(U)$ since $g$ is additive.
By \cite[Theorem 2]{Bri10}, there is a $\GL(U)$-equivariant open embedding $U\xhookrightarrow{} X$ to some smooth projective variety $X$ (taking equivariant resolution if necessary).
Thus the connected group $\GL(U)$ and hence $g$ act trivially on the lattice $\NS(X)/{\rm (torsion)}$.
So $d_1(g)=1$.
\end{proof}

\begin{lemma}\label{lem-ss}
Let $g:S\to S$ be an isogeny of a (connected) semisimple group $S$.
Then $d_1(g)=1$.
\end{lemma}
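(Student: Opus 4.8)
The plan is to reduce to the case of an \emph{adjoint} semisimple group and then exploit the fact that such a group, together with its automorphism group, acts on a projective completion in a way that fixes the Néron--Severi lattice. First I would note, exactly as in the unipotent case (Lemma \ref{lem-unipotent}), that $\Ker(g)$ is a finite normal subgroup of $S$; since $S$ is semisimple, $\Ker(g)$ is central, so it is contained in the (finite) center $Z(S)$. Passing to the quotient $S^{\mathrm{ad}}:=S/Z(S)$, the isogeny $g$ descends to an isogeny $\overline{g}:S^{\mathrm{ad}}\to S^{\mathrm{ad}}$; by Lemma \ref{lem-sf-induction} applied to the commutative diagram with rows $1\to Z(S)\to S\to S^{\mathrm{ad}}\to 1$ (here $Z(S)$ is finite, so $d_1(g|_{Z(S)})=1$ and $d_1(\overline g|_{\pi})=1$), we get $d_1(g)=d_1(\overline g)$. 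Thus we may assume $S$ is adjoint.

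Next I would use the rigidity of automorphisms of adjoint semisimple groups. An isogeny of an adjoint group $S$ with finite kernel is in fact an automorphism (the kernel is central, hence trivial). The automorphism group $\Aut(S)$ is a linear algebraic group whose identity component is $\operatorname{Inn}(S)\cong S$ itself, and the component group $\Aut(S)/\operatorname{Inn}(S)$ is finite (the group of diagram automorphisms). Replacing $g$ by a suitable positive power $g^m$ (which only rescales $d_1$ by the $m$-th power, so $d_1(g)=1$ iff $d_1(g^m)=1$), we may assume $g$ lies in the identity component, i.e., $g=R_a\circ(\text{inner automorphism})$, but an inner automorphism is conjugation by an element of $S$, hence connected to the identity inside $S$ acting on itself.

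Then I would invoke Brion's theorem (\cite[Theorem 2]{Bri10}) to embed $S$ equivariantly into a smooth projective variety $X$ on which the (connected) group $S\rtimes$(connected part of $\Aut$) acts; after taking an equivariant resolution we may assume $X$ smooth. A connected algebraic group acting on $X$ acts trivially on the discrete group $\NS(X)/(\text{torsion})$, because the action factors through a finite group (the action on a lattice is by a homomorphism to a discrete group, and a connected group has no nontrivial such homomorphism). Hence $g^*$ acts as the identity on $\NS(X)\otimes\R$, so $d_1(g)=\rho(g^*|_{\N^1(X)})=1$ by the spectral-radius characterization of the dynamical degree. Since $d_1$ is a birational invariant and invariant under the passages above, this gives $d_1(g)=1$ in general.

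The main obstacle I expect is the bookkeeping in the reduction to the adjoint case: one must check that Lemma \ref{lem-sf-induction} genuinely applies when the kernel $Z(S)$ is merely finite (not positive-dimensional) — the relative dynamical degree over a generically finite map is $1$, and $d_1$ is invariant under generically finite maps, so this is fine, but it should be stated carefully. A secondary subtlety is making sure that an isogeny (as opposed to an arbitrary endomorphism) of $S$ really is captured by the connected automorphism group after passing to a power; since the kernel is central and $S$ is reductive, an isogeny with central kernel of an adjoint group is an automorphism, and $\Aut(S)$ has finitely many components, so a fixed power lands in $\operatorname{Inn}(S)$. With those two points pinned down, the rest is the same lattice-triviality argument already used twice in the paper.
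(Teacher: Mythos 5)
Your proof is correct and follows essentially the same route as the paper's: show the isogeny is actually an automorphism, replace $g$ by a power lying in the connected component $\Aut^{\circ}(S)$, and then conclude via Brion's equivariant completion and the triviality of a connected group's action on $\NS(X)/(\text{torsion})$, exactly as at the end of Lemma~\ref{lem-unipotent}. The only (cosmetic) difference is that you control $\Aut(S)/\Aut^{\circ}(S)$ by passing to the adjoint quotient, where $\Aut^{\circ}(S)=\operatorname{Inn}(S)$, whereas the paper passes to the simply connected cover and identifies $\Aut(S)$ with $\Aut(\mathfrak{s})$; both reductions use the same invariance of $d_1$ under finite maps, and the caveats you flag are handled correctly.
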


\begin{proof}
First note that $(\deg g)^n=\deg g^n=\sharp\Ker(g^n)\le \sharp Z(S)<\infty$ for any $n>0$, where $Z(S)$ is the centre of $S$ (a finite set). Thus $g\in \Aut(S)$, the group of algebraic group automorphisms.
Let $\pi:\widetilde{S}\to S$ be the universal cover of $S$ which is finite.
Then $g$ lifts to $\widetilde{g}:\widetilde{S}\to \widetilde{S}$.
By the product formula, we may replace $S$ by $\widetilde{S}$ and assume $S$ is simply connected.
By the homomorphisms theorem and \cite[Proposition 2.2]{HM69}, $\Aut(S)\cong \Aut(\mathfrak{s})$ in the sense of algebraic groups via the differential map at $e$, where $\mathfrak{s}$ is the Lie algebra of $S$.
By \cite[Proposition D.40]{FH91}, $\Aut(S)/\Aut^{\circ}(S)\cong \Aut(\mathfrak{s})/\Aut^{\circ}(\mathfrak{s})$ is finite.
Replacing $g$ by a power, we may assume $g$ is in the connected algebraic group $\Aut^{\circ}(S)$.
We are done by the same argument at the ending part of Lemma \ref{lem-unipotent}. 
\end{proof}

Finally, we are able to give the same upper bound for any algebraic group.

\begin{theorem}\label{thm_alg_group}
Let $G$ be a connected algebraic group.
Then $G(f)$ is normal in $G$ and 
$$\sharp S_f\le (\sqrt{d_1(f)}+1)^{2(\dim (G)-\dim (G(f)))}$$
for any self-morphism $f=R_a\circ g$ where $g$ is an isogeny and $a\in G$.
\end{theorem}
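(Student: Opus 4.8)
The plan is to argue by induction on $\dim G$, combining the structure theory of connected algebraic groups with Lemma~\ref{lem-sf-induction}. Throughout one may assume $d_1(f)>1$, since otherwise $\Sigma_{f^\infty}$, and hence $S_f$, is empty and there is nothing to prove (and $G(f)$ is the object furnished by Lemma~\ref{lem-unique} in the remaining case $d_1(f)=d_1(g)>1$). First I would dispose of the base case where $G$ is semi-abelian: then $G$ is commutative, so $G(f)$ is automatically normal, and the asserted inequality is precisely Theorem~\ref{thm-semi}. In particular this handles every $G$ with $\dim G\le 1$ for which $d_1(f)>1$: the only connected one-dimensional group with $d_1=1$ that would be excluded is the additive group, which is unipotent and forced out by Lemma~\ref{lem-unipotent}, leaving $\mathbb{G}_m$ and elliptic curves, both semi-abelian.

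For the inductive step, assume $G$ is not semi-abelian. The key point is that $G$ then carries a nontrivial proper connected subgroup $N$ which is \emph{characteristic in $G$} — hence normal in $G$ and, by a dimension count (as $g$ has finite kernel), stable under $g$ — and which is either unipotent or semisimple. Concretely: by Chevalley's theorem let $G_{\mathrm{aff}}\trianglelefteq G$ be the linear part, so that $G/G_{\mathrm{aff}}$ is an abelian variety. If $R_u(G_{\mathrm{aff}})\ne 1$, take $N=R_u(G_{\mathrm{aff}})$ (unipotent). Otherwise $G_{\mathrm{aff}}$ is reductive; if $[G_{\mathrm{aff}},G_{\mathrm{aff}}]\ne 1$, take the semisimple $N=[G_{\mathrm{aff}},G_{\mathrm{aff}}]$. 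If both vanish, then $G_{\mathrm{aff}}$ is a torus, so $G$ is an extension of an abelian variety by a torus, which is automatically commutative (the conjugation action on the torus factors through the discrete group $\operatorname{Aut}(T)$, hence is trivial, and the resulting commutator map then factors through a morphism from an abelian variety to an affine group, hence is trivial), so $G$ is semi-abelian, a contradiction. By Lemma~\ref{lem-unipotent} or Lemma~\ref{lem-ss} we get $d_1(g|_N)=1<d_1(g)=d_1(f)$, so Lemma~\ref{lem-sf-induction} applies to the $f$-equivariant quotient $\pi\colon G\to G/N$: it yields $d_1(g|_{G/N})=d_1(g)>1$, the identity $G(f)=\pi^{-1}\big((G/N)(f|_{G/N})\big)$, and a bijection $S_f\cong S_{f|_{G/N}}$, where $f|_{G/N}=R_{\pi(a)}\circ g|_{G/N}$ is again of the required form.

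Now I would invoke the induction hypothesis on $G/N$, whose dimension is strictly smaller: $(G/N)(f|_{G/N})$ is normal in $G/N$, hence $G(f)=\pi^{-1}\big((G/N)(f|_{G/N})\big)$ is normal in $G$; and $\sharp S_{f|_{G/N}}\le(\sqrt{d_1(f)}+1)^{2(\dim(G/N)-\dim(G/N)(f|_{G/N}))}$. Since $\dim G-\dim G(f)=\dim(G/N)-\dim(G/N)(f|_{G/N})$ and $\sharp S_f=\sharp S_{f|_{G/N}}$, this is exactly the claimed bound, completing the induction (which terminates because $d_1$ stays $>1$ along the tower by Lemma~\ref{lem-sf-induction}(1), so we must reach a positive-dimensional semi-abelian quotient before dimension $0$).

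The main obstacle I expect is the structural bookkeeping in the second paragraph: checking that the subgroups produced by Chevalley's theorem and by the reductive decomposition are characteristic in all of $G$ (not merely in $G_{\mathrm{aff}}$) and are $g$-stable, so that the commutative diagram of Lemma~\ref{lem-sf-induction} is genuinely available; and verifying the small but decisive fact that an extension of an abelian variety by a torus is commutative, which is what prevents the induction from stalling at a non-semi-abelian group. Once these are in place, the dimension telescoping and the propagation of normality of $G(f)$ are routine. I would also remark that the argument is characteristic-free, since Chevalley's structure theorem is valid over any perfect (in particular algebraically closed) field and Lemmas~\ref{lem-unipotent}, \ref{lem-ss}, \ref{lem-sf-induction} were established with no hypothesis on the characteristic.
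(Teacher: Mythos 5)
Your proof is correct and is essentially the paper's argument: both reduce to the semi-abelian case (Theorem~\ref{thm-semi}) by quotienting out the unipotent radical of the Chevalley affine part and then the (semisimple) derived subgroup, using Lemmas~\ref{lem-unipotent}, \ref{lem-ss} and the product formula/bijection of Lemma~\ref{lem-sf-induction}. The only difference is presentational — you package the two reduction steps as an induction on $\dim G$, whereas the paper performs them explicitly in sequence.
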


\begin{proof}
We may assume $1 < d_1(f)$ ($= d_1(g)$).
Let $G_a$ be the maximal connected affine normal subgroup of $G$, 
and $U$ the unipotent radical of $G_a$.
Then $g(G_a)=G_a$ and $g(U)=U$.
Consider the following commutative diagram:
\[
\xymatrix{
1 \ar[r] & U\ar[d]^{g|_U} \ar@{^{(}->}[r] & G \ar[r]^{\pi}\ar[d]^g & G/U \ar[r]\ar[d]^{g|_{G/U}}& 1\\
1 \ar[r] & U \ar@{^{(}->}[r] & G \ar[r]^{\pi}& G/U \ar[r]& 1
}
\]
By Lemma \ref{lem-unipotent}, we have $d_1(g|_U)=1<d_1(g)$.
By Lemma \ref{lem-sf-induction}, 
$\pi$ is $f$-equivariant and $U\le G(f)=\pi^{-1}(G(f|_{G/U}))$.
Then $G(f)$ is normal in $G$ if and only if $(G/U)(f|_{G/U})$ is normal in $G/U$.
Therefore, we may replace $G$ by $G/U$ and assume that $U$ is trivial.

Let $S:=G^{(1)}$ be the commutator subgroup of $G$.
Then $g(S)=S$, and $S=(G_a)^{(1)}$ which is a semisimple group since $G_a$ is a reductive group now.
Consider the following commutative diagram:
\[
\xymatrix{
1 \ar[r] & S\ar[d]^{g|_S} \ar@{^{(}->}[r] & G \ar[r]^{\pi}\ar[d]^g & G/S \ar[r]\ar[d]^{g|_{G/S}}& 1\\
1 \ar[r] & S \ar@{^{(}->}[r] & G \ar[r]^{\pi}& G/S \ar[r]& 1
}
\]
By Lemma \ref{lem-ss},  we have $d_1(g|_S)=1$. 
By Lemma  \ref{lem-sf-induction} and an argument similar to the previous paragraph, we may replace $G$ by $G/S$ and assume that $G$ is commutative.

Note that now $G$ is a semi-abelian variety.
In particular, $G(f)$ is normal in $G$, and the upper bound is as claimed, by Theorem \ref{thm-semi} and Lemma \ref{lem-sf-induction}.
\end{proof}

At the end of this section, we treat the toric varieties.
Let $X$ be a toric variety and $T$ the big torus. A subvariety of the form $O=x\cdot T$ is called a {\it torus orbit} on $X$.
Note that $X=\coprod_{O\subseteq X} O$ is a finite disjoint union of all torus orbits on $X$.

\begin{theorem}\label{thm-toric}
Let $X$ be a toric variety with the big torus $T \subseteq X$.
Let $f \colon X \rightarrow X$ be a dominant self-morphism such that $f(T)\subseteq T$.
Then
\[
\sharp S_{f} \leq \sum_{O \subseteq X} (d_{1}(f)+1)^{\dim (O)}
\]
where the sum runs over all the torus orbits
on $X$.
\end{theorem}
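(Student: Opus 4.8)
The plan is to reduce the problem to the torus case orbit by orbit, using that $f$ respects the $T$-orbit stratification $X=\coprod_O O$. First I would observe that $f|_T$ is a dominant self-morphism of the torus $T$ (since $X$ is irreducible and $f(X\setminus T)$ has strictly smaller dimension, $f(T)$ must be dense in $X$, hence in $T$), so by \cite[Theorem 2]{Itk76} we may write $f|_T=R_a\circ g$ with $g\colon T\to T$ an isogeny. This yields the semi-equivariance $f(t\cdot x)=g(t)\cdot f(x)$ for all $t\in T$, $x\in X$: for fixed $t$, both sides are morphisms in $x$ that agree on the dense open set $T$. Consequently $f$ carries each torus orbit $x\cdot T$ onto the torus orbit $f(x)\cdot T$, so $f$ induces a self-map $\sigma$ of the finite set of torus orbits with $\sigma(O)=f(O)$ and $\dim\sigma(O)\le\dim O$.

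Second, to each $Y\in S_f$ I would attach its \emph{generic orbit} $O_Y$, the unique torus orbit with $Y\cap O_Y$ dense and open in $Y$ (equivalently, $Y\subseteq\overline{O_Y}$ but $Y$ is contained in no smaller orbit closure). I claim $O_Y$ is $f$-invariant. Indeed, since $Y\cap O_Y$ is dense in $Y$ and $Y=\overline{f^s(Y)}$ for all $s$, we get $Y=\overline{f^s(Y\cap O_Y)}\subseteq\overline{\sigma^s(O_Y)}$; intersecting with $O_Y$ gives $O_Y\subseteq\overline{\sigma^s(O_Y)}$, whence $\dim O_Y\le\dim\sigma^s(O_Y)\le\dim O_Y$, and since an orbit is the unique top-dimensional orbit in its own closure this forces $\sigma^s(O_Y)=O_Y$; taking $s=1$ gives $f(O_Y)=O_Y$. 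Thus $\overline{O_Y}$ is an $f$-invariant toric subvariety containing $Y$.

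Third, I would fix an $f$-invariant orbit $O$ and bound $N(O):=\sharp\{Y\in S_f:O_Y=O\}$. If $d_1(f|_{\overline O})<d_1(f)$, then $\overline O\in\Sigma_{f^\infty}$, so maximality of each such $Y$ forces $Y=\overline O$, i.e. $N(O)\le 1$. If instead $d_1(f|_{\overline O})=d_1(f)$, the trace map $Y\mapsto W:=Y\cap O$ injects $\{Y\in S_f:O_Y=O\}$ into $S_{f|_O}$, the set of maximal SDD subvarieties of the torus $O$ for the self-morphism $f|_O$: injectivity holds because $Y=\overline W$; the subvariety $W$ is closed, irreducible and $(f|_O)$-invariant with $d_1(f|_W)=d_1(f|_Y)<d_1(f)=d_1(f|_O)$, hence SDD for $f|_O$; and if some $(f|_O)$-periodic SDD $W'\supsetneq W$ existed, its closure in $X$ would lie in $\Sigma_{f^\infty}$ and strictly contain $Y$, contradicting $Y\in S_{f^\infty}$. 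Since $f|_O=R_c\circ h$ with $h$ an isogeny of $O$ by \cite[Theorem 2]{Itk76}, Remark~\ref{rmk-semi} gives $\sharp S_{f|_O}\le(d_1(f|_O)+1)^{\dim O-\dim O(f|_O)}\le(d_1(f)+1)^{\dim O}$. In either case $N(O)\le(d_1(f)+1)^{\dim O}$; summing this over the finitely many torus orbits (only $f$-invariant ones contribute) yields $\sharp S_f=\sum_O N(O)\le\sum_{O\subseteq X}(d_1(f)+1)^{\dim O}$.

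The only delicate point is the third step: keeping straight the three notions of "small dynamical degree"—relative to $f$ on $X$, to $f|_{\overline O}$ on $\overline O$, and to $f|_O$ on $O$—and the interplay between a subvariety $Y\subseteq\overline O$ and its trace $Y\cap O$ under taking closures. The case split according to whether $d_1(f|_{\overline O})$ equals $d_1(f)$ is exactly what makes these comparisons go through; once it is in place, everything else is either quoted from the torus case (Theorem~\ref{thm-semi}, Remark~\ref{rmk-semi}) or is the standard orbit–cone correspondence for toric varieties.
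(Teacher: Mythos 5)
Your overall strategy coincides with the paper's: decompose $X$ into torus orbits, show the generic orbit $O_Y$ of each $Y\in S_f$ is $f$-invariant, split into cases according to $d_1(f|_{\overline{O}})$ versus $d_1(f)$, and in the non-degenerate case inject into $S_{f|_O}$ and quote the torus bound of Remark~\ref{rmk-semi}. Steps one and two are fine (your semi-equivariance $f(t\cdot x)=g(t)\cdot f(x)$ is exactly the content of the Nakayama lemma the paper cites, and your dimension argument for $f(O_Y)=O_Y$ is a valid variant of the paper's non-empty-intersection argument).

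There is, however, one genuine gap in the third step: you never establish the inequality $d_1(f|_O)\le d_1(f)$ for an $f$-invariant orbit $O$, and your argument silently depends on it twice. First, your case split "either $d_1(f|_{\overline O})<d_1(f)$ or $d_1(f|_{\overline O})=d_1(f)$" is not exhaustive without it. Second, and more seriously, your maximality argument in the second case needs it: to show that a strictly larger $(f|_O)$-periodic SDD subvariety $W'\supset W$ yields a contradiction, you pass to $\overline{W'}\in\Sigma_{f^\infty}$, which requires $d_1(f^s|_{\overline{W'}})<d_1(f^s)$; you only know $d_1((f|_O)^s|_{W'})<d_1(f|_O)^s$, which gives the desired strict inequality against $d_1(f)^s$ only if $d_1(f|_O)\le d_1(f)$. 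This inequality is not automatic here: $O$ is merely locally closed, $X$ need not be projective, and $\overline{O}$ may lie in $\operatorname{Sing}(X)$, so one cannot invoke the usual restriction results for invariant closed subvarieties of projective varieties (the paper's remark following the theorem flags precisely this issue). The paper closes the gap by exhibiting a surjective semi-conjugacy $\pi\colon T\to O$, $\pi(t)=x_0\cdot t$, intertwining $f|_T$ with a translate $f_O=R_{b^{-1}a}\circ f|_O$ of $f|_O$, and then applying the product formula to get $d_1(f|_O)=d_1(f_O)\le d_1(f|_T)=d_1(f)$. Inserting this lemma makes your proof complete and essentially identical to the paper's.
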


\begin{proof}
By \cite[Lemma 3.4.3]{Nak08}, $f=R_a\circ g$ such that $g|_T$ is an isogeny  and $g(x\cdot t)=g(x)\cdot g(t)$; see~\cite[Section 3.4]{Nak08} for details.
Here $R_a$
is the natural extension of the right multiplication map on $T$ by $a\in T$.
Since $g|_T$ is dominant, $f(T)=g(T)\cdot a=T$.
Moreover, $f(x\cdot T)=g(x\cdot T)\cdot a=g(x)\cdot T$.
In particular, $f$ maps every torus orbit onto a torus orbit.
Note that $d_1(f|_T)=d_1(f)$ since it is a birational invariant.

Let $V \in S_{f}$. 
Recall that $\overline{f(V)}=V$ and $d_1(f|_V)<d_1(f)$.
Note that $V=\coprod_{O\subseteq X} V\cap O$.
Then there is a torus orbit $O$ such that $V\cap O$ contains an open dense subset of $V$.
So $f(O)\cap O\supseteq f(V\cap O)\cap (V\cap O)\neq\emptyset$ and hence $f(O)=O$.
Note that $d_1(f|_O)=d_1(g|_O)$ (cf.~\cite[Lemma 5.6]{MS18}). 

Now fixing any torus orbit $O$ with $f(O)=O$, we compute the number of $V\in S_f$ such that $V\cap O$ contains an open dense subset of $V$. 
Fix any point $x_0\in O$.
Write $f(x_0)=x_0\cdot b$ for some $b\in T$.
Define $f_O:O\to O$ via $f_O:=R_{b^{-1} \cdot a}\circ f|_O$.
Then $d_1(f_O)=d_1(f|_O)$ (cf.~\cite[Lemma 5.6]{MS18}).
Define the natural surjective morphism
$\pi: T\to O$ via $\pi(t)=x_0\cdot t$.
Note that 
$$f_O\circ \pi(t)=f(x_0\cdot t)\cdot b^{-1}\cdot a=g(x_0)\cdot g(t)\cdot a\cdot b^{-1}\cdot a=x_0\cdot f(t)=\pi\circ f|_T(t).$$
By the product formula, $d_1(f|_O)=d_1(f_O)\le d_1(f|_T)=d_1(f)$. 

If $d_{1}(f|_{O})<d_{1}(f)$, then $V= \overline{O}$ by the maximality.
If $d_{1}(f|_{O})\ge d_{1}(f)$, then $V\cap O\in S_{f|_O}$.
Applying Remark \ref{rmk-semi} to $f|_{O}$,
the number of such $V$ is bounded by $(d_{1}(f)+1)^{\dim (O)}$.
This finishes the proof.
\end{proof}

\begin{remark}
In Theorem \ref{thm-toric}, as we do not assume $X$ is projective, $f$ is not necessarily finite (nor surjective).
Also during its proof, it is possible that $O\subseteq \Sing(X)$, and then we cannot deduce $d_1(f|_O)\le d_1(f)$ from \cite[Lemma 5.3]{MS18} where we need to take (projective) resolution of $X$. 
Of course, if $X$ (and hence $\overline{O}$) are projective, then there is no issue here.
\end{remark}

\section{Reduction to polarized case: Proof of 
Theorem \ref{main-thm-all}}

In this section, the ground field $k$ is allowed to have arbitrary characteristic.
We will prove Theorem \ref{thm-all} which includes
Theorem~\ref{main-thm-all} as a special case.
The connection with polarized endomorphism will give us finiteness for $\sharp P_f$. 

\begin{proposition}\label{prop-pf-bound}
Let $\pi:X\dashrightarrow Y$ be a dominant rational map between projective varieties.
Let $f:X\to X$ and $g:Y\to Y$ be surjective endomorphisms such that $g\circ\pi=\pi\circ f$,  $\dim(Y)>0$, and $g$ is $d_1(f)$-polarized.
Then  for $n\gg 1$,
$$\sharp P_{f^n}\le (d_1(f))^n(1+o(1)) .$$ \end{proposition}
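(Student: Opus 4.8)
The plan is as follows.

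\emph{Reductions.} Replacing $X$ by the normalization of the graph of $\pi$ in $X\times Y$ --- on which $f\times g$ restricts to a surjective endomorphism lifting $f$ --- we may assume that $\pi\colon X\to Y$ is a morphism of normal projective varieties; by Lemma \ref{bir_invariant_case} this changes $\sharp P_{f^n}$ only by an additive constant, which is $o(d_1(f)^n)$ because $d_1(f)>1$ (as $g$ is $d_1(f)$-polarized). Passing to the Stein factorization $X\to Y'\to Y$, the endomorphism $f$ descends by the universal property to a surjective endomorphism $g'$ of the normal projective variety $Y'$, compatible with $g$ and with $\pi$; moreover $g'$ is again $d_1(f)$-polarized, since the pullback $h^*H$ of an ample polarizing divisor $H$ (with $g^*H\sim d_1(f)H$) along the finite map $h\colon Y'\to Y$ is ample and satisfies $(g')^*h^*H=h^*g^*H\sim d_1(f)\,h^*H$. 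Renaming, we may therefore assume that $\pi\colon X\to Y$ has connected fibres and $0<\dim Y\le\dim X$.

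\emph{Every small-dynamical-degree prime divisor is contracted by $\pi$.} This is the heart of the matter. Let $D\in P_{f^n}$ and set $Z:=\pi(D)$, an irreducible closed $g^n$-invariant subvariety of $Y$. Applying the product formula (Notation \ref{not-2.1}) to the $f^n$-equivariant surjection $\pi|_D\colon D\to Z$ gives $d_1(g^n|_Z)\le d_1(f^n|_D)$, while $d_1(f^n|_D)<d_1(f^n)=d_1(f)^n$ because $D$ has small dynamical degree; hence $d_1(g^n|_Z)<d_1(f)^n$. Suppose $\dim Z\ge 1$. Restricting $(g^n)^*H\sim d_1(f)^nH$ to $Z$ shows that $g^n|_Z$ is $d_1(f)^n$-polarized (here $H|_Z$ is ample on $Z$), so $d_1(g^n|_Z)=d_1(f)^n$ --- a contradiction. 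Therefore $\dim Z=0$, i.e. $D$ is an irreducible component of a fibre of $\pi$.

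\emph{Counting.} If $\dim Y\ge 2$, the general fibre of $\pi$ has dimension $\dim X-\dim Y<\dim X-1$, so any prime divisor $D$ contracted by $\pi$ is contained in the locus $E\subseteq X$ where $\pi$ has fibres of dimension $\ge\dim X-1$; this $E$ is a proper closed subset, independent of $n$, with only finitely many irreducible components of dimension $\dim X-1$, whence $\sharp P_{f^n}=O(1)\le d_1(f)^n(1+o(1))$. If $\dim Y=1$, then $Y$ is a curve and, $X$ being irreducible, the generic fibre of $\pi$ is irreducible, so all but finitely many fibres of $\pi$ are irreducible; hence up to an $O(1)$ term every $D\in P_{f^n}$ is the reduced fibre $\pi^{-1}(p)$ over a point $p$ with $g^n(p)=p$. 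Since $g$ is a $d_1(f)$-polarized self-morphism of the curve $Y$ (which is accordingly $\mathbb{P}^1$ or an elliptic curve), $g^n$ has degree $d_1(f)^n$, and a direct count of fixed points of $g^n$ gives $\sharp\Fix(g^n)\le d_1(f)^n+O(d_1(f)^{n/2})=d_1(f)^n(1+o(1))$. In both cases $\sharp P_{f^n}\le d_1(f)^n(1+o(1))$.

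\emph{Main obstacle.} The only genuinely delicate point is that the leading constant must come out exactly $1$ rather than a fixed multiple of $d_1(f)^n$: this is why the Stein factorization, which makes fibres generically irreducible so that a $\pi$-contracted prime divisor is determined up to bounded error by its image point, is essential, and why the reducible-fibre locus (for $\dim Y=1$) and the divisorial part of the fibre-dimension-jump locus (for $\dim Y\ge 2$) must be isolated as $O(1)$ error terms. Everything else --- the product formula, semicontinuity of fibre dimension, and the elementary count of fixed points on a curve --- is routine.
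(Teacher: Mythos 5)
Your proof is correct and follows essentially the same route as the paper: reduce via the graph and Stein factorization to a fibration, use the product formula together with polarization of $g$ to show every SDD prime divisor is contracted, reduce to $\dim Y=1$, and count fixed points on the curve up to the $O(1)$ contribution of reducible fibres. The only cosmetic difference is that you count $\sharp\Fix(g^n)$ directly by classifying the curve, whereas the paper invokes its Lefschetz-type bound (Proposition \ref{lem-lefschetz}); both yield $d_1(f)^n(1+o(1))$.
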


\begin{proof}
By Lemma \ref{bir_invariant_case}, replacing $\pi$ with its graph and taking the normalizations, we may assume that $\pi$ is a surjective morphism and both $X$ and $Y$ are normal.

Take the Stein factorization $X\to Y_1\to Y$ such that $p_1:X\to Y_1$ has connected fibres and $p_2: Y_1\to Y$ is finite. By \cite[Lemma 5.2]{CMZ20}, $f$ descends to a surjective endomorphism $h:Y_1\to Y_1$ such that $p_1\circ f=h\circ p_1$ and $p_2\circ h=g\circ p_2$. Our $h$, like $g$, is also $d_1(f)$-polarized. Hence we may assume further $\pi$ has connected fibres.

For any $D \in P_{f^\infty} =\bigcup_{n=1}^{\infty} P_{f^n}$, say $D \in P_{f^s}$,
$\pi(D)$ is a point; otherwise, $d_1(f^s)>d_1(f^s|_D)\ge d_1(g^s|_{\pi(D)})=d_1(g^s)=d_1(f^s)$ since both $g^s$ and $g^s|_{\pi(D)}$ are ($d_1(f)^s =$) $d_1(f^s)$-polarized, a contradiction. 
For the upper bound, we may assume $P_{f^\infty}$
is an infinite set whose elements are contained in fibres of $\pi$.
Then $\dim Y = 1$.
Let $\mathcal{B}$ be the finite set of prime divisors in the (finitely many) reducible fibres of $\pi$.
We have an injection 
$$\varphi:P_{f^n}\backslash \mathcal{B}\to \Fix(g^n),$$
sending $D$ to $\pi(D)$. 
Then our argument follows from Proposition \ref{lem-lefschetz}.
\end{proof}


The following result 
is well known when the ground field $k$ has $\ch \, k = 0$ and the proof is not that troublesome due to the Hodge decomposition.
Our proof here works when $\ch \, k$ is arbitrary, and is related to a generalized problem of Weil's Riemann Hypothesis. 

\begin{proposition}\label{lem-lefschetz}
Let $f:C\to C$ be a non-isomorphic endomorphism  on a smooth projective curve $C$. Then $\sharp\Fix(f)\le \deg f+2c(\deg f)^{1/2}+1$ where $c$ is the genus of $C$.	
\end{proposition}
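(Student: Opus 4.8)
The statement is a Lefschetz-type bound for fixed points of an endomorphism of a curve, and the natural approach is the Lefschetz fixed point formula combined with a bound on the eigenvalues of $f^*$ on $H^1$. First I would observe that since $f$ is non-isomorphic on the smooth projective curve $C$, it is finite of degree $q := \deg f > 1$, and the graph $\Gamma_f \subseteq C \times C$ meets the diagonal $\Delta$ properly, so the number of fixed points counted with multiplicity equals the intersection number $\Gamma_f \cdot \Delta$. Since each local intersection multiplicity is at least $1$, we get $\sharp \Fix(f) \le \Gamma_f \cdot \Delta = \sum_{i=0}^{2} (-1)^i \operatorname{tr}\!\left(f^*\mid H^i_{\textup{\'et}}(C, \Q_\ell)\right)$ by the Lefschetz trace formula (valid in arbitrary characteristic for $\ell$ different from $\ch k$).

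Next I would evaluate the three traces. On $H^0$ the map $f^*$ is the identity, so $\operatorname{tr}(f^* \mid H^0) = 1$. On $H^2$, the pullback acts by multiplication by $\deg f = q$ (this is the statement that $f^*$ on the top cohomology is multiplication by the degree), so $\operatorname{tr}(f^* \mid H^2) = q$. Thus
\begin{equation*}
\sharp \Fix(f) \le 1 + q - \operatorname{tr}\!\left(f^* \mid H^1_{\textup{\'et}}(C, \Q_\ell)\right).
\end{equation*}
It remains to bound $\operatorname{tr}(f^* \mid H^1)$ from below, i.e. to bound the eigenvalues $\beta_1, \dots, \beta_{2c}$ of $f^*$ on $H^1$. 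The key input is that each $|\beta_j| = q^{1/2}$: this is exactly the generalized Weil Riemann Hypothesis statement for a $q$-polarized (up to the curve being polarized by any ample divisor, automatic on a curve) endomorphism, which the paper has available as Theorem~\ref{main-thm-all}\,(3) applied to $C$ with $f^*H \sim qH$ for $H$ a point (degree $q$). Hence $|\operatorname{tr}(f^* \mid H^1)| \le \sum_j |\beta_j| = 2c \, q^{1/2}$, giving $\sharp \Fix(f) \le 1 + q + 2c\sqrt{q} = \deg f + 2c(\deg f)^{1/2} + 1$.

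\textbf{Main obstacle.} The crux is justifying the eigenvalue bound $|\beta_j| = \sqrt{q}$ in arbitrary characteristic; in characteristic $0$ this is Serre's theorem via the Hodge-theoretic argument, while in positive characteristic one needs the $\ell$-adic analogue. Here this is precisely the content of Theorem~\ref{main-thm-all}\,(3) (which the excerpt attributes ultimately to Deligne's theorem for Frobenius and proves for all polarized endomorphisms when $i = 1$), so I would simply cite it; one must only check that $f \colon C \to C$ of degree $q > 1$ is $q$-polarized in the sense of the paper, which is immediate since any ample divisor $H$ on a curve satisfies $f^*H \sim qH$ up to the fact that $\deg f^* H = q \deg H$ and $\Pic^0$ adjustments — more cleanly, take $H$ to be a point and use that $f^*(\text{pt})$ has degree $q$. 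A secondary technical point is ensuring the Lefschetz trace formula applies with the correct sign conventions and that fixed-point multiplicities are $\ge 1$ (true since $\Gamma_f$ and $\Delta$ are reduced curves on the smooth surface $C \times C$). I expect these to be routine, so the proof should be short.
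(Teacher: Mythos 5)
Your proposal is correct and follows essentially the same route as the paper: Lefschetz trace formula in \'etale cohomology (using that fixed points have multiplicity $\ge 1$), the traces $1$ and $\deg f$ on $H^0$ and $H^2$, and the bound $|\beta_j|\le\sqrt{\deg f}$ on the $H^1$-eigenvalues coming from the polarizedness of $f$ via Theorem~\ref{thm-all} (the paper phrases this step by transferring to $H^1_{\textup{\'et}}(\Alb(C),\Q_\ell)$ and invoking Hu's formula $\chi_1^2=d_1$, which is the same mechanism underlying Theorem~\ref{thm-all}\,(3)). Your parenthetical worry about upgrading $f^*H\equiv qH$ to $f^*H\sim qH$ is resolved exactly by Theorem~\ref{thm-all}\,(2), as you suspected.
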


\begin{proof}
Let $d:= d_1(f) = \deg f \ge 2$.
Then $f$ is $d$-polarized, so is its induced
endomorphism $g:\Alb(C)\to\Alb(C)$ by
Theorem \ref{thm-all}. Note that $f$ has at most finitely many fixed points.
Applying Lefschetz fixed-point formula, we have 
$$\sharp \textup{Fix}(f)\le\sum_{i=0}^2(-1)^i\textup{Tr}(f^*|_{H_{\textup{\'et}}^i(C,\mathbb{Q}_{\ell})}).$$
Identifying $H_{\textup{\'et}}^1(C,\mathbb{Q}_{\ell})$ with $H_{\textup{\'et}}^1(\Alb(C),\mathbb{Q}_{\ell})$, 
we have $f^*|_{H_{\textup{\'et}}^1(C,\mathbb{Q}_{\ell})}=g^*|_{H_{\textup{\'et}}^1(\Alb(C),\mathbb{Q}_{\ell})}$.
Since $\Alb(C)$ is an abelian variety, we have $\chi_1(f)^2=\chi_1(g)^2=d_1(g)=d$ (cf.~\cite[Formula (3.6)]{Hu19}).
So the proposition is proved. 
\end{proof}

We are going to prove Theorem \ref{thm-all} (cf.~Theorem \ref{main-thm-all}).
It has its own independent interest and various applications, and is also needed in proving Proposition \ref{lem-lefschetz} in positive characteristic.
It generalizes \cite[Theorem 1.2 and Theorem 5.1]{CMZ20} by removing the separable assumption, \cite[Proposition 3.3]{MZ19} by removing the surjective assumption, \cite[Lemma 2.3]{NZ10} without the polarized and characteristic 0 assumptions, and \cite[Remark 5.9]{San17} without the smooth and characteristic 0 assumptions.

We prepare some notation. Let $f$ be a surjective endomorphism of a normal projective variety $X$ over the field $k$ which is allowed to be of \textbf{arbitrary characteristic}.
Let
$$X^{\vee}:=\Pic^\circ(X)_{\red} ,$$
$$f^{\vee}:=f^*|_{\Pic^\circ(X)_{\red}}: X^{\vee}\to X^{\vee}$$
the dual of $X$ and dual isogeny of $f$.
Denote by
$$\Alb(X):=\Pic^\circ(\Pic^\circ(X)_{\red})=X^{\vee\vee}$$
the Albanese variety of $X$.
There are two canonical maps
$$\alb_X:X\to \Alb(X), \hskip 1pc
\mathfrak{alb}_X:X\dashrightarrow \mathfrak{Alb}(X)$$
called {\it Albanese morphism} and {\it Albanese map} satisfying the universal properties in the categories of morphisms and rational maps, respectively (cf.~\cite[Remark 9.5.25]{FGI+05}, \cite[Chapter II.3]{Lan83} and \cite[Section 5]{CMZ20}).
In particular, $f$ descends to surjective endomorphisms on $\Alb(X)$ and $\mathfrak{Alb}(X)$.

We need the following in proving Theorem \ref{thm-all}.

\begin{proposition}\label{prop-sqrt}
Let $f:X\to X$ be a (not necessarily separable) surjective endomorphism of a normal projective variety $X$ over an algebraically closed field $k$ of arbitrary characteristic.
Let $\lambda$ be an eigenvalue of $f^*|_{\Pic^\circ(X)_{\red}}$, equivalently of $(f^{\vee})^*|_{H^1(X^{\vee}, \mathbb{Z}_\ell)}$. 
Then 
$$\iota_f\le |\lambda|^2\le d_1(f)$$
where $\iota_f$ is the minimal modulus of eigenvalues of $f^*|_{\N^1(X)}$.
\end{proposition}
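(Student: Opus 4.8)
The plan is to realise $|\lambda|^{2}$ as a genuine eigenvalue of $f^{*}$ acting on a space of $(1,1)$-classes on $X$ which contains $\N^1(X)$, and on which $f^{*}$ has the \emph{same} spectral radius $d_1(f)$ and the \emph{same} minimal eigenvalue-modulus $\iota_f$ as on $\N^1(X)$; then $\iota_f\le|\lambda|^{2}\le d_1(f)$ is automatic.

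First I would set up the comparison of the two extremal moduli. Since $f$ is finite (a surjective endomorphism of a projective variety is automatically finite), $f^{*}$ is invertible on $\N^1(X)$ with $(f^{*})^{-1}=\frac1{\deg f}\,f_{*}$, so $\iota_f=\deg(f)/\rho\bigl(f_{*}|_{\N^1(X)}\bigr)$; likewise $f^{*}$ restricts to an isogeny $f^{\vee}$ of the abelian variety $X^{\vee}=\Pic^\circ(X)_{\red}$. Both $f^{*}$ and $f_{*}$ preserve the closed, salient, full-dimensional pseudo-effective cone of divisors (and, dually, that of $1$-cycles). A Perron--Frobenius argument on these cones then gives that for an ample class $H$ the limits $\lim_n\|(f^{n})^{*}H\|^{1/n}$ and $\lim_n\|(f^{n})_{*}H\|^{1/n}$ compute the spectral radius of $f^{*}$, resp. of $f_{*}$, on the full $(1,1)$-cohomology $H^{1,1}(X)$ — and, since these sequences stay in $\N^1(X)$, also on $\N^1(X)$, where they equal $d_1(f)$ and $\deg(f)/\iota_f$. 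Hence $f^{*}$ has spectral radius $d_1(f)$ and minimal eigenvalue-modulus $\iota_f$ on $H^{1,1}(X)$ as well, and it suffices to put $|\lambda|^{2}$ into the spectrum of $f^{*}$ on $H^{1,1}(X)$.

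In characteristic $0$ (with $X$ smooth) this is done by a positivity argument. An eigenvalue of $f^{*}|_{\Pic^\circ(X)_{\red}}$ is, after possibly replacing it by its complex conjugate — which is again such an eigenvalue, the action having integral characteristic polynomial — an eigenvalue of $f^{*}$ on $H^{0}(X,\Omega^1_X)\subset H^{1}(X,\C)$, say $f^{*}\omega=\mu\omega$ with $0\neq\omega$. Then $f^{*}(\omega\wedge\overline\omega)=|\mu|^{2}\,\omega\wedge\overline\omega$ in $H^{1,1}(X)$, and $\omega\wedge\overline\omega\neq 0$, because $\pm i\,\omega\wedge\overline\omega$ is a semipositive $(1,1)$-form, strictly positive on the dense open locus $\{\omega\neq0\}$, hence of positive integral against a power of a Kähler class. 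Thus $|\mu|^{2}$ — in particular $|\lambda|^{2}$ — is a genuine eigenvalue of $f^{*}$ on $H^{1,1}(X)$, and the previous paragraph finishes the proof. For arbitrary characteristic the same structure has to be carried instead by the abelian variety $X^{\vee}=\Pic^\circ(X)_{\red}$: Hu's characteristic-free identities $\chi_{2}(f^{\vee})=d_1(f^{\vee})=\chi_{1}(f^{\vee})^{2}$ take the place of the Hodge-theoretic extremal computations, and positivity of the Rosati involution on $\End^{0}(X^{\vee})$ replaces the positivity of $\omega\wedge\overline\omega$; one then transports the bounds back to $X$ through $\alb_X$ and the identifications $\Pic^\circ(X^{\vee})_{\red}\cong\Pic^\circ(\Alb X)_{\red}\cong\Pic^\circ(X)_{\red}$, using the complementary-isogeny identity $[\deg f^{\vee}]=f^{\vee}\!\circ h$ to deduce the lower bound from the upper one.

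I expect the delicate point to be exactly this transfer in positive characteristic and in the singular case. In the Kähler world $H^{1,1}(X)$ is the natural receptacle and the cone comparison is transparent, whereas $\ell$-adically the cycle class map need not surject onto the ``$(1,1)$-part'' of $H^{2}_{\textup{\'et}}(X,\Q_\ell)$, so one must compare the relevant $f^{*}$-eigenvalues with those on $\N^1(X)$ only through quantities genuinely controlled by Hu's theorem on $\Pic^\circ(X)_{\red}$, and keep the dual-isogeny bookkeeping consistent (the scalars $\deg f$ versus $\deg f^{\vee}$, and $f^{\vee}$ versus $\Alb(f)$). Finally, since $\iota_f$ is \emph{not} a birational invariant, the normal (possibly singular) case cannot simply be reduced to a smooth model — the extremal-modulus comparison of the second paragraph has to be run on $X$ itself, which is the other place I would expect to have to work.
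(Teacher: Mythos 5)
Your characteristic-zero, smooth argument --- realising $|\lambda|^2$ as the eigenvalue of $f^*$ on the nonzero class $\omega\wedge\overline{\omega}\in H^{1,1}(X)$ and matching the extremal moduli of $f^*$ on $H^{1,1}(X)$ with those on $\N^1(X)$ via the pseudo-effective cones and $f_*=\deg(f)\,(f^*)^{-1}$ --- is sound, but it covers only a special case of the statement, and for the case the proposition is actually about (normal $X$, arbitrary characteristic) your proposal stops exactly where the proof has to happen. You correctly guess that Hu's characteristic-free identities on the abelian varieties $X^{\vee}$ and $\Alb(X)=X^{\vee\vee}$ must replace the Hodge-theoretic input; in fact the paper gets the middle inequalities almost for free from the minimal polynomial $P$ of $f^{\vee}$ (which, by dualizing $P(f^{\vee})=0$, is also the minimal polynomial of $f^{\vee\vee}$): Hu's formula identifies $d_1$ and $\iota$ on an abelian variety with the squared maximal and minimal moduli of the roots of $P$, and $\lambda$ is a root of $P$, so $\iota_{f|_{\Alb(X)}}\le|\lambda|^2\le d_1(f|_{\Alb(X)})$ with no Rosati-positivity or complementary-isogeny manoeuvre needed. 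The genuine gap is the outer comparison $\iota_f\le\iota_{f|_{\Alb(X)}}$ and $d_1(f|_{\Alb(X)})\le d_1(f)$, which you propose to obtain ``through $\alb_X$''. That step fails as stated: $\alb_X$ is in general not surjective (its image merely generates $\Alb(X)$), so $\alb_X^*$ need not be injective on $\N^1(\Alb(X))$, there is no containment of spectra in either direction, and nothing forces $d_1(f|_{\Alb(X)})\le d_1(f|_{\alb_X(X)})$. Since, as you yourself note, $\iota_f$ is not a birational invariant and the cycle class map does not control the full degree-two cohomology $\ell$-adically, no off-the-shelf comparison closes this.

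The paper's device is the alternate-sum map $s_n:X^{\times(2n)}\to\Alb(X)$, $s_n(x_1,\dots,x_{2n})=\sum_{i\le n}\alb_X(x_i)-\sum_{j>n}\alb_X(x_j)$, which is surjective for $n\gg1$ because $\alb_X(X)$ generates $\Alb(X)$, and which intertwines $f^{\times(2n)}$ with $f|_{\Alb(X)}$. Surjectivity makes $s_n^*:\N^1(\Alb(X))\to\N^1(X^{\times(2n)})$ an equivariant injection, so every eigenvalue of $(f|_{\Alb(X)})^*$ on $\N^1(\Alb(X))$ occurs among those of $(f^{\times(2n)})^*$; combined with $\iota_{f^{\times(2n)}}=\iota_f$ and $d_1(f^{\times(2n)})=d_1(f)$ this yields the chain $\iota_f\le\iota_{f|_{\Alb(X)}}\le|\lambda|^2\le d_1(f|_{\Alb(X)})\le d_1(f)$. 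Some such surjectivity trick is unavoidable; without it your argument establishes the two outer inequalities only after replacing $X$ by its Albanese image, which changes both extremal moduli, so the transfer step you flagged as ``delicate'' is in fact the missing core of the proof.
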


\begin{proof}
Let $P\in \mathbb{Z}[t]$ be the minimal polynomial of $f^{\vee}:X^{\vee}\to X^{\vee}$, which is the minimal polynomial of $(f^{\vee})^*|_{H^1(X^{\vee}, \mathbb{Z}_\ell)}$.
Then $P(f^{\vee})=0$ (cf.~\cite[Section 2]{Hu19}).
Note also that $P(f^{\vee\vee})=P(f^{\vee})^{\vee}=0$.
By a dual argument, $P$ is also the minimal polynomial of $f^{\vee\vee}$, i.e., the minimal polynomial of $(f^{\vee\vee})^*|_{H^1(X^{\vee\vee},\mathbb{Z}_l)}$.
Let $a$ and $b$ be the minimal and maximal modulus of roots of $P$, respectively.
By \cite[Formula (3.6)]{Hu19}, $b^2=d_1(f^{\vee})$.
The same proof for \cite[Formula (3.6)]{Hu19} shows that $a^2=\iota_{f^{\vee}}$.
A similar argument for $f^{\vee\vee}$ says $b^2=d_1(f^{\vee\vee})$ and $a^2=\iota_{f^{\vee\vee}}$. 
Moreover, note that $f|_{\Alb(X)}=f^{\vee\vee}+t$ for some $t\in X^{\vee\vee}=\Alb(X)$.
Then we have $\iota_{f|_{\Alb(X)}}=\iota_{f^{\vee\vee}}=\iota_{f^{\vee}}$ and $d_1(f|_{\Alb(X)})=d_1(f^{\vee\vee})=d_1(f^{\vee})$.
Let $\lambda$ be an eigenvalue of $f^{\vee}$.
Then $\lambda$ is a root of $P$ and hence $a\le |\lambda|\le b$.

We may assume $0\in \alb_X(X)$.
Note that the smallest abelian subvariety of $A$ containing $\alb_X(X)$ is $A$ itself by the universal property.
Applying the similar technique used in the proof of Proposition \ref{prop-gen-d1},
we have the following commutative diagram
\begin{equation}\label{diag2}\tag{**}
\xymatrix@C=50pt@R=30pt{
X^{\times(2n)} \ar[r]^{f^{\times(2n)}} \ar@{->>}[d]_{s_n} & X^{\times(2n)} \ar@{->>}[d]^{s_n}\\
\Alb(X) \ar[r]^{f|_{\Alb(X)}} & \Alb(X) 
}
\end{equation}
where $n\gg 1$ and $s_n:X^{\times(2n)} \to \Alb(X)$ is defined by 
$$s_n(x_1, \ldots,x_{2n})=\sum_{i=1}^n \alb_X(x_i) - \sum_{j=n+1}^{2n} \alb_X(x_j).$$
Since $s_n^*:\N^1(\Alb(X))\to \N^1(X^{\times(2n)})$ is (equivariant) injective, we conclude:
$$\iota_f=\iota_{f^{\times(2n)}}\le \iota_{f|_{\Alb(X)}}\le|\lambda|^2\le d_1(f|_{\Alb(X)})\le d_1(f^{\times(2n)})=d_1(f).$$
So the proposition is proved.
\end{proof}

\begin{theorem}\label{thm-all}
Let $f:X\to X$ be a (not necessarily separable) surjective endomorphism of a normal projective variety $X$ over an algebraically closed field $k$ of arbitrary characteristic.
Assume $d_1(f)>1$.
Then we have:

\begin{enumerate}
\item 
Set $\mathbb{K}:=\Q$ or $\R$.
Suppose $f^*D\equiv d_1(f)D$ for some $\mathbb{K}$-Cartier divisor $D$.
Then there exists another $\mathbb{K}$-Cartier divisor $D'\equiv D$ such that $f^*D'\sim_{\mathbb{K}} d_1(f)D'$.
In particular, $f^*D_1\sim_{\R} d_1(f)D_1$ for some nef $\R$-Cartier divisor $D_1$.
\item 
Suppose $f^*B\equiv qB$ for some big $\R$-Cartier divisor and $q>1$ a rational number.
Then $f$ is $q$-polarized.
\item 
Suppose $f$ is $q$-polarized. 
Then any eigenvalue of $f^*|_{H^1(X,\Q_\ell)}$ has modulus $\sqrt{q}$.
\item
Suppose $f$ is $q$-polarized (resp.\,int-amplified). Then so are the induced endomorphisms $f|_{\Alb(X)}:\Alb(X)\to \Alb(X)$ and $f|_{\mathfrak{Alb}(X)}:\mathfrak{Alb}(X)\to \mathfrak{Alb}(X)$ via the Albanese morphism and Albanese map respectively.
\end{enumerate}
\end{theorem}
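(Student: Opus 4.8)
The plan is to establish the four parts in order, since each feeds into the next. For part (1), the hypothesis $f^*D\equiv d_1(f)D$ means $f^*D-d_1(f)D\in\Pic^\circ(X)_{\red}=X^\vee$, so $D$ gives a class fixed (up to scaling by $d_1(f)$) only numerically. The idea is to chase this discrepancy through $f^\vee:X^\vee\to X^\vee$: by Proposition \ref{prop-sqrt}, every eigenvalue $\lambda$ of $f^\vee$ satisfies $|\lambda|^2\le d_1(f)$, so $d_1(f)$ is strictly larger than every eigenvalue's modulus squared, hence $d_1(f)\cdot\id-f^\vee$ is an isogeny (no eigenvalue of $f^\vee$ equals $d_1(f)$, as $d_1(f)>1$ forces $d_1(f)^2>d_1(f)\ge|\lambda|^2$). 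Therefore the map $x\mapsto d_1(f)x-f^\vee(x)$ on $X^\vee$ is surjective with finite kernel, and after replacing $D$ by $D+E$ for a suitable torsion-ish correction $E\in X^\vee$ (solving $d_1(f)E-f^*E\equiv -(f^*D-d_1(f)D)$ in $X^\vee$, possible after tensoring with $\mathbb K$), we get $f^*D'\sim_{\mathbb K}d_1(f)D'$. The nef statement follows by taking $D$ to be the nef class realizing the spectral radius of $f^*|_{\N^1(X)}$ (which exists by Perron--Frobenius on the pseudo-effective cone, cf.\ part (5) of Notation \ref{not-2.1} relating $d_1(f)$ to the spectral radius).

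For part (2), I would argue that $f^*B\equiv qB$ with $B$ big and $q>1$ forces $q=d_1(f)$: indeed by the product/spectral-radius description $d_1(f)=\rho(f^*|_{\N^1(X)})$, and $q$ is an eigenvalue with a pseudo-effective (big) eigenvector, so $q\le d_1(f)$; conversely bigness of $B$ plus $f^*B\equiv qB$ and iteration give $d_1(f)=\lim((f^n)^*H\cdot H^{\dim X-1})^{1/n}\le q\cdot(\text{bounded})$, forcing $q=d_1(f)$. Then by part (1) there is $B'\equiv B$ with $f^*B'\sim_{\R}qB'$; since $B$ is big and $B'\equiv B$, $B'$ is big, and one upgrades big-and-$f^*$-eigen to ample using an argument like \cite[Theorem 1.2]{CMZ20} — writing $B'=A+\sum a_iE_i$ with $A$ ample and noting the negative part is contracted/forced to vanish under iteration of the $q$-scaling ($q>1$) combined with the pseudo-effectivity of $(f^n)^*$-pullbacks. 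This yields an ample $\R$-divisor $H$ with $f^*H\sim_{\R}qH$, and a standard rationality/denominator-clearing argument (using $q\in\Q$) produces an ample Cartier $H_0$ with $f^*H_0\sim qH_0$, i.e.\ $f$ is $q$-polarized.

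For part (3), assume $f$ is $q$-polarized. Apply part (2)'s conclusion that $q=d_1(f)$, then invoke Proposition \ref{prop-sqrt}: every eigenvalue $\lambda$ of $f^*|_{\Pic^\circ(X)_{\red}}=f^*|_{H^1(X,\Q_\ell)}$ (the identification being standard, as $H^1_{\text{\'et}}$ is dual to the Tate module of $\Pic^\circ$) satisfies $\iota_f\le|\lambda|^2\le d_1(f)=q$. It remains to show $\iota_f=q$ as well, i.e.\ the \emph{minimal} modulus of eigenvalues of $f^*|_{\N^1(X)}$ equals $q$; but this is immediate from $q$-polarization, since $f^*$ acts on the ample Cartier class $H$ by $f^*H\sim qH$ and more strongly one shows $f^*$ is $q$ times an isometry-like operator on $\N^1$ — concretely, $f^*(f_*H)=q H$-type relations together with the projection formula $f_*f^*=(\deg f)\id$ and $\deg f=q^{\dim X}$ pin down all eigenvalues of $f^*|_{\N^1(X)}$ to have modulus exactly $q$ (this is the classical fact for polarized endomorphisms). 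Hence $\iota_f=q=d_1(f)$, squeezing $|\lambda|^2=q$, so $|\lambda|=\sqrt q$.

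For part (4), the tool is the equivariant diagram \eqref{diag2}: $s_n\colon X^{\times(2n)}\to\Alb(X)$ intertwines $f^{\times(2n)}$ and $f|_{\Alb(X)}$, and $s_n^*\colon\N^1(\Alb(X))\hookrightarrow\N^1(X^{\times(2n)})$ is equivariantly injective. If $f$ is $q$-polarized with ample $H$, then $H^{\boxtimes 2n}$ is ample on $X^{\times(2n)}$ with $(f^{\times 2n})^*(H^{\boxtimes 2n})\sim q\,H^{\boxtimes 2n}$; I would pull back an ample class $H_A$ from $\Alb(X)$, note $s_n^*H_A$ is nef and $f^{\times 2n}$-eigen with eigenvalue the corresponding one for $f|_{\Alb(X)}$, and use that on the abelian variety $\Alb(X)$ a nef nonzero $f|_{\Alb(X)}$-eigenclass with eigenvalue $>1$ together with part (3) (all $H^1$-eigenvalues have modulus $\sqrt q$, hence all $\N^1=\mathrm{Sym}^2$-type eigenvalues have modulus $q$) forces $f|_{\Alb(X)}$ to be $q$-polarized — explicitly, by part (2) applied on $\Alb(X)$ once we exhibit a big (here ample, since $\Alb(X)$ is abelian and any nonzero nef class with $f|_{\Alb}$-eigenvalue $q$ can be made ample) eigenclass. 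The int-amplified case is parallel: $f$ int-amplified means all eigenvalues of $f^*|_{\N^1(X)}$ have modulus $>1$ (cf.\ \cite{Men20}-type characterization), and the same equivariant injectivity $s_n^*$ transports this to $\Alb(X)$, where it is the definition of $f|_{\Alb(X)}$ being int-amplified. The Albanese \emph{map} case $\mathfrak{Alb}(X)$ is birational-invariant and reduces to the morphism case after replacing $X$ by a smooth model where $\alb_X$ is a morphism, using that $\mathfrak{Alb}$ and $\Alb$ agree there.

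The main obstacle I anticipate is part (1)'s passage from numerical to $\R$-linear equivalence: it genuinely requires that $d_1(f)$ is \emph{not} an eigenvalue of $f^\vee=f^*|_{\Pic^\circ_{\red}}$, which is exactly where Proposition \ref{prop-sqrt}'s bound $|\lambda|^2\le d_1(f)<d_1(f)^2\le d_1(f)^{\,2}$ (valid since $d_1(f)>1$) is used, and one must be careful that this works in arbitrary characteristic where $\Pic^\circ$ may be non-reduced — hence the systematic use of $\Pic^\circ(X)_{\red}$ and the dual-isogeny formalism rather than Hodge theory. The secondary subtlety is the rationality step in part (2) (clearing denominators to get a Cartier, not merely $\R$-Cartier, polarization), which needs $q\in\Q$ and a finiteness argument on the relevant lattice.
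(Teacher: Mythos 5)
Your proposal follows essentially the same route as the paper: part (1) via the eigenvalue bound $|\lambda|\le\sqrt{d_1(f)}<d_1(f)$ from Proposition \ref{prop-sqrt}, which makes $(f^\vee)_{\mathbb K}-d_1(f)\,\id$ invertible, plus Perron--Frobenius for the nef eigenvector; part (2) by combining $q=d_1(f)$ with the norm criterion of \cite{MZ18b} and part (1); part (3) from Proposition \ref{prop-sqrt} together with $\iota_f=q$ for polarized maps; and part (4) via the equivariant surjection $s_n\colon X^{\times(2n)}\to\Alb(X)$ and descent of polarized/int-amplified endomorphisms. Two small cautions: in part (4) your assertion that a nonzero nef $f|_{\Alb(X)}$-eigenclass on the abelian variety ``can be made ample'' is not justified as stated --- the paper instead invokes the descent results \cite[Lemma 3.10, Theorem 3.11]{MZ18b} and \cite[Lemma 3.4]{Men20} directly; and for the $\mathfrak{Alb}(X)$ statement you should pass to the normalization of the graph of $\mathfrak{alb}_X$ rather than to a ``smooth model,'' since resolution of singularities is not available in arbitrary characteristic.
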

\begin{proof}

We follow the notation in the proof of Proposition \ref{prop-sqrt}.

(1) Denote by $(f^{\vee})_{\mathbb{K}}:=f^{\vee}\otimes_{\Z}\mathbb{K}$ and $(X^{\vee})_{\mathbb{K}}:=X^{\vee}\otimes_{\Z}\mathbb{K}$.
Note that $d_1(f)\in \mathbb{K}$.
If $f^*D\equiv d_1(f)D$, then $f^*D-d_1(f)D=:\Delta\in (X^{\vee})_{\mathbb{K}}$.
Note that although $(X^{\vee})_{\mathbb{K}}$ is in general an infinite dimensional $\mathbb{K}$-vector space,
the Jordan canonical form of $(f^{\vee})_{\mathbb{K}}$ has all the Jordan blocks of size $\le \deg P$ (recall that $P$ is the minimal polynomial of $f^{\vee}$).
By Proposition \ref{prop-sqrt}, any diagonal entry $\lambda$ further satisfies $|\lambda| \le \sqrt{d_1(f)}<d_1(f)$.
In particular, $(f^{\vee})_{\mathbb{K}}-d_1(f)\id_{(X^{\vee})_{\mathbb{K}}}$ is invertible.
Thus, there is a $\mathbb{K}$-Cartier divisor $L\in X^{\vee}$ such that $f^*L-d_1(f)L\sim_{\mathbb{K}} \Delta$.
Then $f^*D'\sim_{\mathbb{K}} d_1(f)D'$ for $D'=D-L$.
For the second half of (1),  applying Perron-Frobenius theorem to the nef cone (cf.~\cite{Bir67}), there is a nef $\R$-Cartier divisor $D_1$ such that $f^*D_1\equiv d_1(f)D_1$; we then apply the first half of (1).

(2) Suppose $f^*B\equiv qB$ for some big $\R$-Cartier divisor and $q>1$ a rational number.
Then algebraic integer $q$ is an integer and $f^*H\equiv qH$ for some ample Cartier divisor $H$ by the norm criterion; see \cite[Propositions 1.1 and 2.9]{MZ18b}.
By (1), $f^*H'\sim_{\Q} qH'$ for some ample $\Q$-Cartier divisor $H'\equiv H'$.
Replacing $H'$ by a multiple, (2) is proved.

(3) just follows from Proposition \ref{prop-sqrt}.

(4) By the commutative diagram (\ref{diag2}) in the proof of Proposition \ref{prop-sqrt}, if $f$ is $q$-polarized (resp.\,int-amplified), then so are $f^{\times(2n)}$ and $f|_{\Alb(X)}$ (cf.~\cite[Lemma 3.10, Theorem 3.11]{MZ18b}) for polarized endomorphism and \cite[Lemma 3.4]{Men20}).
For the Albanese map $\mathfrak{alb}_X:X\dashrightarrow \mathfrak{Alb}(X)$, we may replace $X$ by the normalization of the graph of $\mathfrak{alb}_X$ and argue like above (cf.~\cite[Proof of Theorem 1.2]{CMZ20}).
So (4) is proved.
\end{proof}

\begin{remark}
In Theorem \ref{thm-all} (1), we cannot replace $d_1(f)$ by $\iota_f$ because it is possible that some eigenvalue $\lambda$ of $f^{\vee}$ is $\iota_f$.
By the intersection pairing and the projection formula, one can show in Theorem \ref{thm-all} (3) that any eigenvalue of $f^*|_{H^{2\dim(X)-1}(X,\Q_\ell)}$ has modulus $q^{\dim(X)-1/2}$.

\end{remark}

\section{Higher-dimensional Hodge Index Theorem}

In this section, we recall the higher-dimensional Hodge index theorem and give an application (cf.~Proposition \ref{prop_generalization_hodge}) which will be crucially used in Section \ref{section_proof_divisor}. 

The following lemma is known in \cite[Corollaire 3.5]{DS04} for the case of compact K\"ahler manifolds. See \cite[Lemma 4.5]{MZg20} for its full algebraic version. 
The proof below is taken from \cite[Lemma 4.5]{MZg20} with slight simplification for the convenience of the  readers.
\begin{lemma}\label{lem-hodge}
Let $X$ be a normal projective variety.
Let $L\not\equiv 0$ and $D\not\equiv 0$ be two pseudo-effective $\R$-Cartier divisors such that $L^2$ and $D^2$ are pseudo-effective (cf.~Notation \ref{not-2.1}).
Suppose $L\cdot D\equiv_w 0$.
Then $L\equiv aD$ for some $a> 0$.
\end{lemma}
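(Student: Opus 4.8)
\textbf{Proof strategy for Lemma \ref{lem-hodge}.}

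The plan is to reduce to the surface case by a general hyperplane section argument, then invoke the classical Hodge index theorem on a smooth projective surface. First I would pass to a resolution of singularities: let $\sigma\colon \widetilde{X}\to X$ be a resolution, and replace $L,D$ by their pullbacks $\widetilde{L}=\sigma^*L$, $\widetilde{D}=\sigma^*D$. Pulling back preserves pseudo-effectivity of divisors and of their self-intersections (the latter because intersection numbers against nef classes are preserved and the pullback of a pseudo-effective cycle stays pseudo-effective), and it preserves the vanishing $\widetilde{L}\cdot\widetilde{D}\equiv_w 0$; moreover a conclusion $\widetilde{L}\equiv a\widetilde{D}$ descends to $L\equiv aD$ by the projection formula since $\sigma_*\sigma^* = \mathrm{id}$ on cycle classes of the right dimension. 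So I may assume $X$ is smooth of dimension $n$.

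Next, cut down by a very ample divisor. Choose $H$ very ample on $X$ and let $Y = H_1\cap\cdots\cap H_{n-2}$ be a general complete intersection surface, which is smooth and irreducible by Bertini, with $\iota\colon Y\hookrightarrow X$ the inclusion. Set $L_Y := \iota^*L$, $D_Y := \iota^*D$. The key points to check are: (i) $L_Y\not\equiv 0$ and $D_Y\not\equiv 0$ on $Y$ — this follows because $L$ pseudo-effective and nonzero forces $L\cdot H^{n-1}>0$ (else $L$ would be weakly numerically trivial), i.e.\ $L_Y\cdot (H|_Y) = L\cdot H^{n-1} > 0$; similarly for $D$. (ii) $L_Y^2 = L^2\cdot H^{n-2}\ge 0$ and $D_Y^2\ge 0$, using that $L^2$, $D^2$ are pseudo-effective $2$-cycles and $H$ is ample. (iii) $L_Y\cdot D_Y = L\cdot D\cdot H^{n-2} = 0$ since $L\cdot D\equiv_w 0$. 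Now on the smooth projective surface $Y$, the Hodge index theorem applies: the intersection form on $\mathrm{N}^1(Y)$ has signature $(1,\rho-1)$, and if $L_Y^2\ge 0$, $L_Y\not\equiv 0$, then any class $D_Y$ with $L_Y\cdot D_Y = 0$ and $D_Y^2\ge 0$ must be proportional to $L_Y$; combined with $L_Y\cdot H|_Y>0$ and $D_Y\cdot H|_Y>0$ the proportionality constant is positive, so $L_Y\equiv a_Y D_Y$ with $a_Y>0$.

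The remaining step — and the one I expect to be the main obstacle — is to promote the numerical proportionality on the general surface section $Y$ to numerical proportionality on $X$ itself. One clean way: the assignment $Y\mapsto a_Y$ is readily seen to be independent of the general section (two general surfaces meet in a general curve, on which both restrictions must agree), so write $a := a_Y$; then $(L - aD)\cdot C = 0$ for every curve $C$ that arises as a general complete intersection of very ample divisors. Since such curves span $\mathrm{N}_1(X)_{\mathbb{R}}$ (indeed $\mathrm{N}^1(X)$ injects into $\mathrm{N}^1$ of a general curve section once the curve is cut by sufficiently positive divisors — this is a standard consequence of the Lefschetz-type / Noether–Lefschetz-avoiding argument, or more elementarily: a Cartier divisor class is numerically trivial iff it meets every complete-intersection curve in degree zero), we conclude $L\equiv aD$ on $X$. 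Care is needed to justify that complete-intersection curves detect numerical triviality of $\mathbb{R}$-Cartier divisor classes and that $a$ does not vary; these are the technical heart of the argument, and I would cite \cite[Lemma 4.5]{MZg20} and \cite[Corollaire 3.5]{DS04} for the precise form rather than reprove it. Finally, $a>0$ because $L\cdot H^{n-1}>0$ and $D\cdot H^{n-1}>0$.
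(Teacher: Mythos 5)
Your first half—restricting $L$ and $D$ to a general complete-intersection surface and invoking the surface Hodge index theorem—is sound and is exactly how the paper begins (though the paper works with a general normal surface $S\equiv_w A_1\cdots A_{i-1}\cdot A_{i+1}\cdots A_{n-1}$ inside $X$ itself rather than first passing to a resolution; your resolution step quietly requires that $\sigma^*(L)^2$ stay pseudo-effective against products of ample classes on $\widetilde X$ that are not pullbacks from $X$, which is not justified and is best avoided). The genuine gap is precisely where you flag it: promoting the proportionality from the surface $Y$ back to $X$. Your argument that the ratio $a_Y$ is independent of $Y$ — ``two general surfaces meet in a general curve'' — is false as soon as $\dim X\ge 4$: two independently general complete-intersection surfaces in an $n$-fold meet in dimension $4-n$. (One can repair this by a chain of surfaces each differing in only one defining hypersurface, but that is not what you wrote.) Moreover, deferring the ``precise form'' of this step to \cite[Lemma 4.5]{MZg20} is circular in this context, since that reference \emph{is} the lemma being proved; the paper states explicitly that its proof is taken from there.

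The paper closes exactly this gap with a linear-algebra argument your proposal lacks. Fixing very ample $A_1,\dots,A_{n-1}$, it sets $W=\{x\in\N^1(X)\mid x\cdot A_1\cdots A_{n-1}=0\}$ and, for each $i$, considers the form $q_i(x,y)=x\cdot y\cdot A_1\cdots A_{i-1}\cdot A_{i+1}\cdots A_{n-1}$. The surface Hodge index theorem makes $q_i$ semi-negative on $W$, while pseudo-effectivity of $L^2,D^2$ and $q_i(L,D)=0$ make it semi-positive on $\R L+\R D$; hence $\widetilde D:=L-aD\in W$ is $q_i$-isotropic, and the standard quadratic-form trick ($q_i(\lambda\widetilde D-w,\lambda\widetilde D-w)\le 0$ for all $\lambda$) forces $q_i(\widetilde D,\cdot)=0$ on all of $\N^1(X)$. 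One may then replace the $A_i$ by arbitrary classes one at a time to conclude $\widetilde D\equiv_w 0$, and finally $\widetilde D\equiv 0$ by \cite[Lemma 3.2]{Zha16}. This one-slot-at-a-time substitution is what replaces both your independence-of-$a_Y$ claim and your assertion that complete-intersection curves detect numerical triviality; without some such mechanism your proof does not go through in dimension $\ge 4$.
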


\begin{proof}
We may assume $n:=\dim (X)\ge 2$.
Fix very ample Cartier divisors $A_1,\cdots,A_{n-1}$ on $X$. 
Denote by 
$$V:=\R D+\R L, \hskip 1pc \text{ and}$$
$$W:=\{x\in\textup{N}^1(X)~|~x\cdot A_1\cdots A_{n-1}=0\}$$
subspaces of $\N^1(X)$. By \cite[Lemma 3.2]{Zha16}, $L \not\equiv_w{0}$ and $D \not\equiv_w{0}$. Then $a:=L\cdot A_1\cdots A_{n-1}/D\cdot A_1\cdots A_{n-1}>0$, and $V\cap W$ is of dimension $\le 1$ and spanned by
$$\widetilde{D}:=L-aD\in V\cap W .$$ 

For each $1\le i\le n-1$, consider the following bilinear form on $\textup{N}^1(X)$:
$$q_i(x,y):=x\cdot y\cdot A_1\cdots A_{i-1}\cdot A_{i+1}\cdots A_{n-1}.$$
Choose a general normal projective surface $S$ on $X$ such that $S\equiv_w A_1\cdots A_{i-1}\cdot A_{i+1}\cdots A_{n-1}$.
Then $q_i(x,y)=x|_{S}\cdot y|_S$.
By the Hodge index theorem on $S$, $q_i$ is semi-negative on $W$.
Note that $q_i$ is semi-positive on $V$ since $L^2, D^2$ are pseudo-effective and $q_i(L, D)=0$.
Hence $q_i(\widetilde{D},\widetilde{D})=0$.
For any $w\in W$ and  $\lambda\in\mathbb{R}$, we  have $q_i(\lambda\widetilde{D}- w, \lambda\widetilde{D}-w)\le 0$. 
Then the inequality
$$q_i(w,w)-2\lambda q_i(\widetilde{D},w)\le 0$$
holds for any $\lambda\in\mathbb{R}$ and $w\in W$.
This happens only when $q_i(\widetilde{D},w)=0$ for any $w\in W$. 

Note that $W$ and $A_i$ span $\N^1(X)$ because $W$ is a hyperplane of $\N^1(X)$ and $A_i\not\in W$.
Note also that $q_i(\widetilde{D}, A_i)=0$. 
Then 
$$q_i(\widetilde{D},x)=\widetilde{D}\cdot A_1\cdots A_{i-1}\cdot x\cdot A_{i+1}\cdots A_{n-1}=0$$
for any $x\in\N^1(X)$.
This implies that $V\cap W$ is independent of the choice of each $A_i$. So 
$\widetilde{D}\cdot x_1\cdots x_{n-1}=0$
for any divisors $x_1,\cdots, x_{n-1}\in \N^1(X)$, which means $\widetilde{D}\equiv_w 0$. Note that $\widetilde{D}$ is $\mathbb{R}$-Cartier. Therefore, $\widetilde{D}\equiv 0$ by \cite[Lemma 3.2]{Zha16}.
\end{proof}

\begin{proposition}\label{prop_generalization_hodge}
Let $X$ be a normal projective variety of dimension $n$, $L$  a nef $\mathbb{R}$-Cartier divisor, and $D_1, D_2$  two non-zero effective $\mathbb{R}$-Cartier divisors on $X$. Suppose  $D_1$ and $D_2$ have no common components  and $D_1\equiv D_2$. Suppose further that $L\cdot D_1\cdot H_1\cdots H_{n-2}=0$ for some nef and big divisors $H_1,\cdots, H_{n-2}$. Then $L\equiv aD_1\equiv aD_2$ for some $a>0$ and $L^2\equiv_wD_1\cdot D_2\equiv_w 0$. In particular, $\Supp D_1$ and $\Supp D_2$ are disjoint.
\end{proposition}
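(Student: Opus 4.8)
The plan is to derive the statement from the higher-dimensional Hodge index lemma (Lemma~\ref{lem-hodge}), applied to the pair $(L,D_1)$; throughout we assume $n\ge 2$ and $L\not\equiv 0$. Three of the hypotheses of Lemma~\ref{lem-hodge} are immediate: $D_1\not\equiv 0$ since $D_1$ is a nonzero effective divisor; $L^2$ is pseudo-effective since $L$ is nef; and $D_1^2$ is pseudo-effective because $D_1\equiv D_2$ gives $D_1^2\equiv_w D_1\cdot D_2$, and $D_1\cdot D_2$ is an \emph{effective} $(n-2)$-cycle — this is the first place the ``no common component'' hypothesis is used. Granting the remaining hypothesis $L\cdot D_1\equiv_w 0$ (discussed below), Lemma~\ref{lem-hodge} gives $L\equiv aD_1$ for some $a>0$, hence $L\equiv aD_2$ as well. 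Then $L^2\equiv_w a\,(L\cdot D_1)\equiv_w 0$, and therefore $D_1\cdot D_2\equiv_w D_1^2\equiv_w a^{-2}L^2\equiv_w 0$. Finally, $D_1\cdot D_2$ is a nonnegative effective $(n-2)$-cycle which is weakly numerically trivial, so $D_1\cdot D_2\cdot H^{n-2}=0$ for $H$ ample; since $D_1$ and $D_2$ meet properly, every component $V$ of $\Supp D_1\cap\Supp D_2$ has dimension $n-2$ and contributes $H^{n-2}\cdot V>0$, forcing $\Supp D_1\cap\Supp D_2=\emptyset$.

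It remains to prove $L\cdot D_1\equiv_w 0$, which is the crux. First, $L\cdot D_1$ is already a pseudo-effective $(n-2)$-cycle class: writing $D_1=\sum_j a_jP_j$ with $P_j$ prime and $a_j>0$, one has, for ample $A_i$, $L\cdot D_1\cdot A_1\cdots A_{n-2}=\sum_j a_j\,(L|_{P_j})\cdot(A_1|_{P_j})\cdots(A_{n-2}|_{P_j})\ge 0$, since restrictions of nef divisors are nef. The plan for the vanishing is then: (i) from $L\cdot D_1\cdot H_1\cdots H_{n-2}=0$, deduce $L\cdot D_1\cdot A_1\cdots A_{n-2}=0$ for some ample $A_i$, by writing each nef and big $H_i$ as an ample class plus an effective one via Kodaira's lemma and expanding: the ``ample'' contributions are nonnegative as above, and the ``effective'' contributions are treated by restriction to the $P_j$, using that $D_2|_{P_j}$ is an effective divisor numerically equivalent to $D_1|_{P_j}$ (so that $D_1$ does not behave like a contracted divisor along $P_j$); then (ii) conclude that a pseudo-effective cycle class vanishing against a single system of ample classes is weakly numerically trivial, by removing one ample factor at a time and invoking the fact that a nonnegative linear functional on the nef cone which vanishes at an interior point vanishes identically.

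I expect step (i) to be the main obstacle. Indeed, the crude assertion ``a pseudo-effective cycle class with zero intersection against nef and big divisors is weakly numerically trivial'' is simply false — e.g.\ $L=\sigma^*A$ for a blow-up $\sigma$ of a threefold along a line and $D_1$ the exceptional divisor — so the hypothesis $D_1\equiv D_2$ with no common component has to be used in an essential way. The delicate point is to deal with those components $P_j$ of $D_1$ that lie in the augmented base locus $\mathbf{B}_+(H_i)$ of every $H_i$ at once, where no Kodaira decomposition avoids $P_j$; there one must instead work on $P_j$ with the relation $D_1|_{P_j}\equiv D_2|_{P_j}$ directly, presumably by an induction on $\dim X$.
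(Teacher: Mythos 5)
Your overall strategy coincides with the paper's: verify the hypotheses of Lemma~\ref{lem-hodge} for the pair $(L,D_1)$, with the crux being $L\cdot D_1\equiv_w 0$, obtained by expanding Kodaira decompositions $H_i\sim A_i+E_i$ and using the ``movability'' of $D_1$ supplied by $D_1\equiv D_2$ with no common components. Your verification that $D_1^2\equiv_w D_1\cdot D_2$ is pseudo-effective, your step (ii) (passing from vanishing against one product of ample classes to $\equiv_w 0$ by peeling off one factor at a time, which the paper leaves implicit), and your closing disjointness argument are all correct and agree with the paper. (Your added standing assumption $L\not\equiv 0$ is indeed needed for the conclusion $a>0$ and is tacit in the statement; it holds in the paper's application.)

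The one genuine gap is exactly where you flag it, in step (i), but the difficulty you anticipate is not real and the fix is not an induction on $\dim X$. You try to control the effective term $L\cdot D_1\cdot E_1\cdot H_2\cdots H_{n-2}$ by restricting to the components $P_j$ of $D_1$, and then worry about those $P_j$ contained in $\mathbf{B}_+(H_i)$ for every decomposition, where $E_1|_{P_j}$ need not be effective. Restrict in the other direction instead: write $E_1=\sum_l c_lQ_l$ with $Q_l$ prime and $c_l>0$, and note that $D_1\cdot Q_l$ is a pseudo-effective $(n-2)$-cycle for \emph{every} prime divisor $Q_l$. Indeed, if $Q_l\not\subseteq\Supp D_1$ then $D_1\cdot Q_l$ is an effective cycle; if $Q_l\subseteq\Supp D_1$ then $Q_l\not\subseteq\Supp D_2$, and since $D_1\equiv D_2$ the class $D_1\cdot Q_l$ has the same intersection numbers against any $n-2$ Cartier divisors as the effective cycle $D_2\cdot Q_l$. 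Hence $D_1\cdot E_1$ is pseudo-effective, and pairing a pseudo-effective $(n-2)$-cycle with the nef classes $L,H_2,\dots,H_{n-2}$ gives a nonnegative number (approximate nef by ample). No base-locus considerations enter, and the decomposition $H_i\sim A_i+E_i$ may be completely arbitrary. This is precisely the paper's one-line justification (``the intersection of the numerically movable $D_1$ with any prime divisor is pseudo-effective, hence so is $D_1\cdot E_i$''); with it your step (i), and therefore your whole argument, closes.
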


\begin{proof}
For each $H_i$, we write $H_i\sim A_i+E_i$ for some ample divisor $A_i$ and effective divisor $E_i$.	 
By the assumption, the intersection of (the numerically movable) $D_1$ with any prime divisor is  pseudo-effective, hence  so is $D_1\cdot E_i$ for each $i$. Then $L\cdot D_1\cdot H_1\cdots H_{n-2}=0$ implies that 
$$L\cdot D_1\cdot A_1\cdot H_2\cdots H_{n-2}=0.$$
Repeating the same process, we have $L\cdot D_1\cdot A_1\cdots A_{n-2}=0$ and thus $L\cdot D_1\equiv_w 0$. 
By Lemma \ref{lem-hodge} and noting $D_1^2 \equiv_w  D_1 D_2$, we get $L\equiv aD_1$ ($\equiv a D_2$) with $a>0$. So we have 
$$L^2\equiv_w a^2 D_1\cdot D_2\equiv_w a L\cdot D_1\equiv_w0.$$

If $(\Supp D_1)\cap (\Supp D_2)\neq \emptyset$, then $D_1\cdot D_2$ is a non-zero effective cycle and hence we get a contradiction with $D_1\cdot D_2\not\equiv_w 0$.
\end{proof}

\section{Proofs of Theorems~\ref{main-thm-pf} and
\ref{main-thm-equiv}}\label{section_proof_divisor}

In this section, we will prove
Theorems \ref{main-thm-equiv} and  \ref{thm_divisor_case} below
which includes Theorem~\ref{main-thm-pf} as a 
simpler version and bounds the number $\sharp P_f$ in terms of the N\'eron-Severi class group rank $\rho^{\textup{ns}}(X)$, the Picard number $\rho(\widetilde{X})$ of any projective resolution $\widetilde{X}$ of $X$, and the first dynamical degree $d_1(f)$.

\begin{theorem}\label{thm_divisor_case}
Let $f:X\to X$ be a surjective endomorphism of a normal projective variety.
Let $\widetilde{X} \to X$ be any projective resolution.
Then
$$\sharp P_f\le \textup{max}\{(1+\sqrt{d_1(f)})^2+a, (\rho^{\textup{ns}}(X)+1)\cdot(\lfloor\frac{\rho(\widetilde{X})}{2}\rfloor+1)\},$$ 
where $a$ is a positive constant depending only on $X$. 
In particular, 
for $n\gg 1$, 
$$\sharp P_{f^n}\le(d_1(f))^n\cdot (1+o(1)) .$$
\end{theorem}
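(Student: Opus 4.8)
The plan is the following: when $\sharp P_f$ is large, produce an $f$-equivariant fibration onto a curve by means of the higher-dimensional Hodge index theorem (Proposition~\ref{prop_generalization_hodge}) and bound $\sharp P_f$ by the number of fixed points of a polarized endomorphism of that curve (Proposition~\ref{lem-lefschetz}); when $\sharp P_f$ is small, bound it directly in terms of $\rho^{\textup{ns}}(X)$ and $\rho(\widetilde{X})$.

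First I would dispose of the trivial case $d_1(f)=1$, in which no $f$-invariant prime divisor can be SDD, so $P_f=\emptyset$. Assuming $d_1(f)>1$, Theorem~\ref{thm-all}(1) provides a nonzero nef $\mathbb{R}$-Cartier divisor $L$ with $f^*L\sim_{\mathbb{R}}d_1(f)L$. The key observation is that every $D\in P_f$ is \emph{orthogonal} to $L$: since $L|_D$ is nef and $(f|_D)^*(L|_D)\equiv d_1(f)(L|_D)$, whereas the spectral radius of $(f|_D)^*$ on $\textup{N}^1$ of a smooth model of $D$ equals $d_1(f|_D)<d_1(f)$, one must have $L|_D\equiv 0$ on $D$, hence $L\cdot D\cdot A_1\cdots A_{n-2}=0$ for all ample divisors $A_i$, where $n=\dim X$. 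As $L\ne 0$ is nef, the functional $x\mapsto L\cdot x\cdot A_1\cdots A_{n-2}$ on $\textup{Cl}^{\textup{ns}}(X)\otimes\mathbb{R}$ is nonzero, so all classes $[D]$ with $D\in P_f$ lie in a fixed hyperplane.

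Now suppose $\sharp P_f$ exceeds $(\rho^{\textup{ns}}(X)+1)(\lfloor\rho(\widetilde{X})/2\rfloor+1)$. Among any $\rho^{\textup{ns}}(X)+1$ of the divisors there is a linear relation in $\textup{Cl}^{\textup{ns}}(X)\otimes\mathbb{R}$; collecting positive and negative coefficients yields nonzero effective divisors $E^{+}\equiv E^{-}$ with no common component, and $L\cdot E^{+}\cdot A_1\cdots A_{n-2}=0$ by the previous paragraph. Proposition~\ref{prop_generalization_hodge} then forces $L\equiv aE^{+}\equiv aE^{-}$ for some $a>0$, $(E^{+})^2\equiv_w 0$ (so $L^2\equiv_w 0$), and $\textup{Supp}\,E^{+}\cap\textup{Supp}\,E^{-}=\emptyset$; intersecting once more with $L$ shows moreover that each $D\in P_f$ is either a component of $E^{+}$ or disjoint from $\textup{Supp}\,E^{+}$. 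Feeding this back through the Hodge index theorem on $\widetilde{X}$ (whose pairing $x\cdot y\cdot A_1\cdots A_{n-2}$ has signature $(1,\rho(\widetilde{X})-1)$), a family of pairwise disjoint prime divisors all orthogonal to the nef class $L$ must sweep out a pencil once it is large enough — the two signs in the relations and the components of reducible fibres being what produces the factor $\lfloor\rho(\widetilde{X})/2\rfloor$. Thus $L$ is proportional to the class of a pencil, and, replacing $X$ by its normalization and $f$ by a suitable power, one extracts an $f$-equivariant fibration $\pi\colon X\to C$ onto a smooth projective curve whose general fibre is a member of $P_f$. I expect this last extraction — promoting the numerical/Hodge data to an honest morphism, compatibly with $f$, using $f$-equivariance (Theorem~\ref{thm-all}) to upgrade numerical equivalence to $\mathbb{R}$-linear equivalence — to be the main obstacle.

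Finally, by construction the general fibre of $\pi$ is SDD, so $d_1(f|_\pi)<d_1(f)$, and the product formula gives $d_1(f|_C)=d_1(f)>1$; hence $\deg(f|_C)=d_1(f)>1$, $f|_C$ is $d_1(f)$-polarized, and Riemann--Hurwitz forces the genus $c$ of $C$ to satisfy $c\le 1$. Any $D\in P_f$ must be contracted by $\pi$ — otherwise $\pi|_D$ is dominant and $d_1(f|_D)\ge d_1(f|_C)=d_1(f)$, contradicting SDD — so $D$ is a component of a fibre; therefore $\sharp P_f\le\sharp\Fix(f|_C)+a$, where $a$ is the number of components of the finitely many reducible fibres of $\pi$, a quantity bounded in terms of $X$ alone. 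Proposition~\ref{lem-lefschetz} gives $\sharp\Fix(f|_C)\le d_1(f)+2c\sqrt{d_1(f)}+1\le(1+\sqrt{d_1(f)})^2$, which together with the small case yields the displayed bound. For the last assertion, apply it with $f$ replaced by $f^n$: since $d_1(f^n)=d_1(f)^n\to\infty$ while $\rho^{\textup{ns}}(X)$, $\rho(\widetilde{X})$ and $a$ are independent of $n$, the term $(1+\sqrt{d_1(f)^n})^2+a=d_1(f)^n(1+o(1))$ dominates for $n\gg 1$, giving $\sharp P_{f^n}\le d_1(f)^n(1+o(1))$.
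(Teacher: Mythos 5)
Your overall strategy is the paper's: orthogonality of every $D\in P_f$ to the nef eigendivisor $L$, a Hodge-index argument producing numerical proportionality $L\equiv aE^{+}\equiv aE^{-}$ and disjointness of supports, an $f$-equivariant fibration to a curve, and a fixed-point count via Proposition~\ref{lem-lefschetz}. But the step you yourself flag as ``the main obstacle'' --- passing from a large family of pairwise disjoint prime divisors orthogonal to $L$ to an honest fibration $\pi\colon X\to C$ --- is a genuine gap, and it is exactly the point the paper has a concrete tool for: the theorem of Bogomolov--Pirutka--Silberstein \cite{BPS16} on families of disjoint divisors. Once one has $\rho(\widetilde{X})+1$ pairwise disjoint prime divisors on the smooth model $\widetilde{X}$, that theorem produces a fibration $\widetilde{\pi}\colon\widetilde{X}\to C$ having (at least three of) them as full fibres; then $\widetilde{L_f}\equiv\widetilde{\pi}^{*}H_C$ by Proposition~\ref{prop_generalization_hodge}, Lemma~\ref{lem_nef_descend} descends $\widetilde{\pi}$ to $\pi\colon X\to C$, and the rigidity lemma gives $f$-equivariance and the $d_1(f)$-polarization of $f|_C$. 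Without this (or an equivalent substitute) the argument does not close; ``$L$ is proportional to the class of a pencil'' is not by itself enough to produce a morphism.

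There is a second, related gap in how you produce the disjoint divisors. You take the linear relation among the classes $[D]$ in $\textup{Cl}^{\textup{ns}}(X)\otimes\mathbb{R}$ and then apply Proposition~\ref{prop_generalization_hodge} to $E^{+}\equiv E^{-}$ on $X$; but the $D\in P_f$ are Weil divisors that need not be $\mathbb{R}$-Cartier, so neither the intersection products nor the proposition apply on $X$ itself. The paper's Lemma~\ref{lem_N_disjoint} works instead on $\widetilde{X}$ with the proper transforms $\widetilde{D}$ \emph{together with} the $\sigma$-exceptional divisors $E_1,\dots,E_k$: since $\rho(\widetilde{X})=\rho^{\textup{ns}}(X)+k$, any $\rho^{\textup{ns}}(X)+1$ proper transforms plus the $E_i$ are numerically dependent on $\widetilde{X}$, and the negativity lemma is then needed twice --- once to show each side of the relation contains at least one non-exceptional divisor (so each group of $\rho^{\textup{ns}}(X)+1$ really yields two divisors, whence the factor $\lfloor\rho(\widetilde{X})/2\rfloor+1$), and once to show the exceptional parts of relations coming from different groups have no common components, so that disjointness holds \emph{across} groups and the $2m$ divisors accumulate. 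Your sentence about ``the two signs in the relations'' gestures at the first count but proves neither it nor the cross-group disjointness. The remainder of your argument (contraction of every $D\in P_f$ to a point of $C$, the genus bound $c\le 1$ via Riemann--Hurwitz, and the asymptotic in $n$) is correct and matches the paper.
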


We now prepare to prove Theorem~\ref{thm_divisor_case}.

\begin{notation}\label{notation_proof}
Let $f:X\to X$ be a surjective endomorphism on a normal projective variety $X$ of dimension $n$.
We fix a projective resolution $$\sigma:\widetilde{X}\to X$$
with $E_1,\cdots, E_k$ its exceptional prime divisors.  
For $D \in P_f$, denote by
$\widetilde{D}:= \sigma_*^{-1}(D)$, the 
proper transform of $D$.
Denote by $$\widetilde{P_f}:=\{\widetilde{D} = \sigma_*^{-1}(D)\,|\,D\in P_f\}.$$ 
By Perron-Frobenius theorem (cf.~\cite{Bir67}), fix a nef $\mathbb{R}$-Cartier divisor $L_f\not\equiv 0$ on $X$ with $$f^*L_f\equiv d_1(f) L_f.$$
Denote by $$\widetilde{L_f}:=\sigma^*L_f.$$ 
\end{notation}

\begin{lemma}\label{lem_weak_sdd}
$L_f\cdot D\equiv_w 0$ for any $D\in P_f$.
\end{lemma}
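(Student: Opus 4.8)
The claim is that $L_f \cdot D \equiv_w 0$ for any $D \in P_f$, where $L_f$ is a nef $\R$-Cartier divisor on $X$ with $f^*L_f \equiv d_1(f)L_f$, and $D$ is an $f$-invariant prime divisor of small dynamical degree. The natural approach is to exploit the eigen-relation for $L_f$ together with the SDD condition on $D$, intersecting against a suitable auxiliary class and tracking how the intersection numbers grow under iterating $f$.

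First I would fix notation: let $d := d_1(f) > 1$ (if $d_1(f) = 1$ the statement about SDD divisors is vacuous, since there are none), and let $r := d_1(f|_D) < d$. The quantity to control is the $(n-1)$-dimensional intersection number $L_f \cdot D \cdot H^{n-2}$ for a fixed ample $H$ on $X$ (by the higher-dimensional Hodge-type arguments, $\equiv_w 0$ for an $\R$-Cartier pseudo-effective class follows from vanishing against generic complete-intersection curves, so it suffices to show $L_f \cdot D \cdot H_1 \cdots H_{n-2} = 0$ for ample $H_i$, or even just for one fixed ample $H$ raised to the power $n-2$; cf.\ the machinery in Section 7). The key identity is the projection formula combined with the eigen-relation: applying $f^*$ and using $f^*L_f \equiv d L_f$, $f^*D = (\deg f|_D)\, D + (\text{stuff supported on the ramification}))$... — more cleanly, using that $D$ is $f$-invariant so $f_*D = (\deg(f|_D)) D$ (as cycles, up to the correct multiplicity), and $f_*(f^*L_f \cdot D \cdot \xi) = L_f \cdot f_*(D \cdot f^*\xi)$ for appropriate $\xi$. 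The cleanest route is: since $f|_D \colon D \to D$ is surjective with $d_1(f|_D) = r$, the numbers $(f^n)^*H|_D \cdot (\text{ample})^{n-2}$ on $D$ grow like $r^n$ (up to subexponential factors), while pulling back $L_f$ multiplies by $d^n$. Comparing the two growth rates in the quantity $(f^n)^*(L_f \cdot H^{n-2}) \cdot D = (f^n)^*L_f \cdot (f^n)^*(H^{n-2}) \cdot D$ and pushing forward forces the leading term $L_f \cdot D \cdot H^{n-2} \cdot d^n$ to be dominated by something of order $d^n \cdot r^n / d^n$-type...

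More precisely, here is the mechanism I would use. Write $N := L_f \cdot D \cdot H^{n-2} \ge 0$ (nonnegative since $L_f$ nef, $D$ effective, $H$ ample). On one hand, $f^*L_f \equiv d L_f$ gives, after intersecting with $D \cdot (f^*H)^{n-2}$ and using the projection formula together with $f_* D = \delta D$ where $\delta := \deg(f|_D) \ge 1$:
\[
d \cdot L_f \cdot D \cdot (f^*H)^{n-2} = f^*L_f \cdot D \cdot (f^*H)^{n-2} = \delta\, \big(L_f \cdot D \cdot H^{n-2}\big) = \delta N.
\]
Hmm, that gives $L_f \cdot D \cdot (f^*H)^{n-2} = (\delta/d) N$. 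Iterating and using that $(f^n)^*H|_D$ has intersection numbers on $D$ bounded by $C \cdot (r+\epsilon)^n \cdot \delta$-type growth (this is where $d_1(f|_D) = r$ enters — on the $(n-1)$-fold $D$ one has $(f^n)^*H|_D \cdot (\text{fixed ample on } D)^{n-2} \le C(r+\epsilon)^n$), while the left side equals $(\delta/d)^n N$ — comparing, $(\delta/d)^n N \le C(r+\epsilon)^n$, and one checks $\delta \le d$ always (since $\delta = \deg(f|_D) \le d_1(f)$ is false in general — actually $\deg(f|_D)$ can exceed $d_1(f)$). I need to be more careful. The correct statement: $(f^n)_* ((f^n)^* L_f \cdot D \cdot (f^n)^*H^{n-2})$ handled via projection formula yields $d^n N \cdot (\text{something})$, and on the other side this is a pushforward of a cycle living on $D$ whose $H$-degree is controlled by $\delta_n := \deg(f^n|_D)$ times $(f^n)^*H|_D$-intersection numbers. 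Dividing by $\delta_n$ and taking $n$-th roots, the constraint becomes $d \le$ (growth rate of $(f^n)^*H|_D$-intersections on $D$)$^{1/n} \cdot (\text{bounded})$, and that growth rate is $\le r \cdot (\text{something} \le d)$...

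\textbf{The main obstacle.} Getting the bookkeeping exactly right between the degree factor $\deg(f^n|_D)$, the dynamical degree $d_1(f|_D)$ on the $(n-1)$-dimensional variety $D$, and the factor $d^n$ from $f^*L_f \equiv d^n L_f$, so that the inequality forces $N = 0$. The clean way, which I would adopt, is: intersect the eigen-relation $(f^n)^* L_f \equiv d^n L_f$ with $D$ and $(f^n)^* H^{n-2}$, push forward via $(f^n)_*$ using $(f^n)_* D = \deg(f^n|_D) \cdot D$ (as $D$ is $f$-invariant), obtaining
\[
d^n \cdot L_f \cdot D \cdot (f^n)^* H^{n-2} = \deg(f^n|_D)\cdot L_f \cdot D \cdot H^{n-2} = \deg(f^n|_D)\cdot N;
\]
wait — this is wrong because pushforward doesn't commute that simply with the mixed term. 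The genuinely correct identity: $L_f \cdot D \cdot (f^n)^*H^{n-2} = d^{-n} (f^n)^*L_f \cdot D \cdot (f^n)^* H^{n-2} = d^{-n} \deg(f^n|_D)^{-1}\cdot$... no. I will instead argue: restrict everything to $D$. On $D$, $(f^n|_D)$ is surjective; $L_f|_D$ is a nef class with $(f^n|_D)^*(L_f|_D) \equiv d^n (L_f|_D)$. Intersecting with $(f^n|_D)^* (H|_D)^{n-2}$ on the $(n-1)$-dimensional $D$ and using the projection formula for $f^n|_D$: $d^n \cdot L_f|_D \cdot (f^n|_D)^*(H|_D)^{n-2} = \deg(f^n|_D) \cdot L_f|_D \cdot (H|_D)^{n-2} = \deg(f^n|_D) \cdot N$. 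Hence $N = d^n \deg(f^n|_D)^{-1} \cdot (L_f|_D \cdot (f^n|_D)^*(H|_D)^{n-2})$. Now bound $L_f|_D \cdot (f^n|_D)^*(H|_D)^{n-2} \le C \cdot \deg(f^n|_D) \cdot \|(f^n|_D)^*\|^{?}$... — the cleanest bound is that $(f^n|_D)^*(H|_D)^{n-2} \cdot (\text{nef class})$ grows at most like $\deg(f^n|_D) / (\text{top degree}) $, and combined with $d_1(f|_D) = r < d$, one gets $N \le C (r/d)^n \cdot (\text{poly})$ for all $n$, forcing $N = 0$. Making this last inequality rigorous — i.e., that $L_f|_D \cdot (f^n|_D)^*(H|_D)^{n-2} \le C \cdot n^c \cdot r^n \cdot \deg(f^n|_D)/d^{?}$, using only standard properties of dynamical degrees and the product/composition formulas for $f|_D$ — is the technical heart, and I expect it to follow from the submultiplicativity of intersection-number norms under pullback together with $d_1(f|_D) = r$, exactly as in the standard proofs that mixed degrees are controlled by dynamical degrees. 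Once $N = L_f \cdot D \cdot H^{n-2} = 0$ and (by varying $H$) $L_f \cdot D \cdot H_1 \cdots H_{n-2} = 0$ for all ample $H_i$, since $L_f \cdot D$ is represented by a pseudo-effective $\R$-Cartier class we conclude $L_f \cdot D \equiv_w 0$, which is the assertion.
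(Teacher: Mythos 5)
Your opening instinct --- restrict everything to $D$ and play the eigen-relation $f^*L_f\equiv d_1(f)L_f$ against $d_1(f|_D)<d_1(f)$ --- is exactly right, and the reduction of $\equiv_w 0$ to the vanishing of $L_f\cdot D\cdot H_1\cdots H_{n-2}$ for ample $H_i$ is fine. But the mechanism you finally settle on does not close. Write $d:=d_1(f)$, $\delta_n:=\deg(f^n|_D)$ and $N:=L_f|_D\cdot (H|_D)^{n-2}$. Your projection-formula identity
$$d^n\cdot L_f|_D\cdot (f^n|_D)^*\bigl((H|_D)^{n-2}\bigr)=\delta_n\,N$$
is correct but (for $\dim X\ge 3$) carries no information: applying $(f^n|_D)_*$ to the eigen-relation gives $(f^n|_D)_*(L_f|_D)\equiv \delta_n d^{-n}L_f|_D$, so by the projection formula the left-hand side is identically $\delta_n N$ and the identity reads $N=N$. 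If you instead try to bound the left-hand side from above by growth rates, the class $(f^n|_D)^*\bigl((H|_D)^{n-2}\bigr)$ is governed by the $(n-2)$-nd dynamical degree of $f|_D$ on the $(n-1)$-fold $D$, and log-concavity of dynamical degrees gives $d\cdot d_{n-2}(f|_D)/d_{n-1}(f|_D)\ge d/d_1(f|_D)>1$; the resulting bound on $N$ diverges instead of forcing $N=0$. So the ``technical heart'' you defer is not a routine submultiplicativity estimate --- the setup itself has to change.

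The fix is to pull back only $L_f$ and intersect with the \emph{fixed} curve class $(H|_D)^{n-2}$: since $CH|_D-L_f|_D$ is ample for $C\gg 0$ and $(H|_D)^{n-2}$ is a pseudo-effective curve class on $D$, one gets $d^nN=(f^n|_D)^*(L_f|_D)\cdot (H|_D)^{n-2}\le C\,(f^n|_D)^*(H|_D)\cdot (H|_D)^{n-2}$, and the right-hand side grows like $d_1(f|_D)^n$ up to subexponential factors, whence $N=0$ because $d_1(f|_D)<d$. The paper's actual proof is even shorter and purely linear-algebraic, using the fact (recorded in Notation \ref{not-2.1}) that $d_1(g)=\rho(g^*|_{\N^1})$ for a surjective self-morphism: $L_f|_D$ is a nef class with $(f|_D)^*(L_f|_D)\equiv d_1(f)\,L_f|_D$, so if $L_f|_D\not\equiv 0$ then $d_1(f)$ is an eigenvalue of $(f|_D)^*$ on $\N^1(D)$ and hence $d_1(f)\le\rho\bigl((f|_D)^*|_{\N^1(D)}\bigr)=d_1(f|_D)$, contradicting the SDD hypothesis. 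Thus $L_f|_D\equiv 0$, and $L_f\cdot D\equiv_w 0$ follows immediately. You never invoke this spectral-radius characterization, which is the one-line key.
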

\begin{proof}
Note that $(f|_D)^*(L_f|_D)\equiv d_1(f)(L_f|_D)$ and $d_1(f|_D)<d_1(f)$.
Then $L_f|_D\equiv 0$ and hence $L_f\cdot D\equiv_w 0$.
\end{proof}

\begin{lemma}\label{lem_N_disjoint}
There exist at least $2\lfloor\sharp P_f/(\rho^{\textup{ns}}(X)+1)\rfloor$ disjoint prime divisors $\widetilde{D}_u$ in $\widetilde{P_f}$ with $\widetilde{L_f}\cdot\widetilde{D}_u\equiv_w 0$.
\end{lemma}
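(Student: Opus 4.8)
The plan is to extract from $P_f$ a large sub-collection of prime divisors whose proper transforms on $\widetilde X$ are pairwise disjoint, by playing off the numerical classes of the divisors against the rank of the N\'eron--Severi class group. First I would consider the classes $[D] \in \textup{Cl}^{\textup{ns}}(X) \otimes_{\Z} \R$ for $D$ ranging over $P_f$, together with the class of some fixed ample divisor; since the target vector space has dimension $\rho^{\textup{ns}}(X)$, one cannot have more than $\rho^{\textup{ns}}(X)+1$ of these classes in ``general position'' — more precisely, after partitioning $P_f$ according to the ray in $\textup{Cl}^{\textup{ns}}(X)_\R$ that each $[D]$ spans (noting all $[D]$ are nonzero, being prime divisors, hence pseudo-effective and nonzero by \cite[Lemma 3.2]{Zha16}), a pigeonhole argument produces a single ray containing at least $\lceil \sharp P_f / (\rho^{\textup{ns}}(X)+1)\rceil$ of the divisors. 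Actually the cleaner route: among any $\rho^{\textup{ns}}(X)+1$ divisors in $P_f$, two must have weakly numerically equivalent classes (up to positive scaling), so by iterating one gets a chain of $\ge \lfloor \sharp P_f/(\rho^{\textup{ns}}(X)+1)\rfloor$ \emph{pairs} $(D_u, D_u')$ with $D_u \equiv c_u D_u'$ for some $c_u > 0$; rescaling to compare with a fixed reference divisor one can even arrange $D_u \equiv D_u'$ after replacing each by a positive $\R$-multiple of its class, but since we are dealing with honest prime divisors I would instead argue directly at the level of numerical classes.

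The key step is then to invoke Proposition \ref{prop_generalization_hodge} to convert ``numerically proportional'' into ``geometrically disjoint.'' For each such pair, on the resolution $\widetilde X$ we have the proper transforms $\widetilde D_u, \widetilde D_u'$, which are distinct prime divisors (hence have no common components), and by Lemma \ref{lem_weak_sdd} together with Lemma \ref{lem_nef_descend}-type pullback compatibility we get $\widetilde{L_f} \cdot \widetilde D_u \equiv_w 0$; more precisely $L_f \cdot D \equiv_w 0$ on $X$ pulls back to $\widetilde{L_f}\cdot\widetilde D \equiv_w 0$ on $\widetilde X$ using $\sigma_* \widetilde D = D$ and the projection formula, possibly up to contributions of the exceptional divisors $E_i$ which intersect $\widetilde{L_f} = \sigma^* L_f$ trivially along curves contracted by $\sigma$. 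Choosing the auxiliary nef-and-big divisors $H_1,\dots,H_{n-2}$ on $\widetilde X$ appropriately so that $\widetilde{L_f}\cdot \widetilde D_u \cdot H_1 \cdots H_{n-2} = 0$, Proposition \ref{prop_generalization_hodge} applies to the pair $\widetilde D_u, \widetilde D_u'$ (after rescaling one of them so that the two classes agree, which is permitted since the proposition only needs $\R$-divisors with proportional classes) and yields that $\Supp \widetilde D_u$ and $\Supp \widetilde D_u'$ are disjoint.

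The remaining combinatorial bookkeeping is to show that these pairs can be chosen so that the divisors across \emph{different} pairs are also pairwise disjoint — this is where the factor of $2$ in the statement enters, and it is the main obstacle I anticipate. I would handle it by applying the pigeonhole/pairing argument inductively: having produced one disjoint pair $\{\widetilde D_1, \widetilde D_1'\}$, discard those two divisors from $\widetilde{P_f}$ and repeat on the remaining set, which still has at least $\sharp P_f - 2$ elements; each round produces two new mutually disjoint prime divisors at the cost of consuming $\rho^{\textup{ns}}(X)+1$ slots worth of numerical-class freedom, so after $\lfloor \sharp P_f/(\rho^{\textup{ns}}(X)+1)\rfloor$ rounds one has collected at least $2\lfloor \sharp P_f/(\rho^{\textup{ns}}(X)+1)\rfloor$ prime divisors $\widetilde D_u$, each satisfying $\widetilde{L_f}\cdot \widetilde D_u \equiv_w 0$, and all pairwise disjoint by the disjointness conclusion of Proposition \ref{prop_generalization_hodge} applied within each round (and trivially across rounds, since disjointness of supports propagates once two divisors have been shown to meet no common locus). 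The subtlety to be careful about is that Proposition \ref{prop_generalization_hodge} requires $D_1 \equiv D_2$ exactly, not merely proportionally, and requires the intersection-zero hypothesis with specific nef-and-big $H_i$; I would need to verify that the proportionality constants $c_u$ can be absorbed and that the relation $\widetilde{L_f}\cdot \widetilde D \equiv_w 0$ indeed forces $\widetilde{L_f}\cdot \widetilde D \cdot H_1\cdots H_{n-2} = 0$ for \emph{every} choice of nef-and-big $H_i$, which is immediate from the definition of $\equiv_w$ since nef-and-big classes are limits of ample ones. That done, the lemma follows.
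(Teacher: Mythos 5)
There are two genuine gaps. First, your pigeonhole step is wrong: among $\rho^{\textup{ns}}(X)+1$ prime divisors whose classes live in a $\rho^{\textup{ns}}(X)$-dimensional space you obtain a \emph{linear dependence relation}, not two divisors whose classes are positively proportional (a $\rho$-dimensional space contains arbitrarily many pairwise non-proportional vectors, and the classes of the $D\in P_f$ need not lie on finitely many rays). The correct move --- and what the paper does --- is to split the dependence relation into its positive and negative parts, obtaining two non-zero \emph{effective combinations} $\sum_{i\in I}a_i\widetilde D_i+\widetilde E\equiv\sum_{j\in J}b_j\widetilde D_j+\widetilde F$ with $I\cap J=\emptyset$; Proposition \ref{prop_generalization_hodge} is then applied to these two effective divisors, not to a pair of individual primes. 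Moreover the dependence must be produced on $\widetilde X$, where the $k$ exceptional divisors enter the count (this is why the paper takes $\sharp S_t=\rho^{\textup{ns}}(X)+1$, so that $\sharp(S_t\cup\{E_1,\dots,E_k\})=\rho(\widetilde X)+1$), and one then needs the negativity lemma to show that $I$ and $J$ are both non-empty, i.e., that the relation is not supported purely on the exceptional locus. Your proposal ignores the exceptional divisors entirely, but they unavoidably appear once a numerical relation on $X$ is transported to proper transforms on $\widetilde X$.

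Second, the cross-pair disjointness is not ``trivial'' as you claim: knowing $\widetilde D_1\cap\widetilde D_1'=\emptyset$ and $\widetilde D_2\cap\widetilde D_2'=\emptyset$ says nothing about $\widetilde D_1\cap\widetilde D_2$. The paper obtains pairwise disjointness of all $2m$ chosen divisors because \emph{every} effective combination produced is numerically equivalent to (a positive multiple of) $\widetilde{L_f}$, so the combination coming from block $t$ and the one coming from block $t'$ are numerically equivalent to each other; Proposition \ref{prop_generalization_hodge} is then applied to this cross pair, after a further negativity-lemma argument showing that the exceptional parts $\widetilde E_t$ and $\widetilde F_{t'}$ have no common components (subtract a common part $\Delta$, deduce $(1-e)\widetilde{L_f}\equiv\Delta$, and conclude $\Delta=0$). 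Both of these points are essential to the factor $2$ and to the global disjointness in the statement, and neither is supplied by your proposal.
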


\begin{proof}
Let $m:=\lfloor\sharp P_f/(\rho^{\textup{ns}}(X)+1)\rfloor$.
Then we have $m$ pairwise disjoint subsets $S_1,\cdots, S_m$ of $\widetilde{P_f}$ with $\sharp S_t=\rho^{\textup{ns}}(X)+1$ for each $1\le t\le m$.

Note that $\rho(\widetilde{X})=\rho^\textup{ns}(X)+k$ where $k$ is the number of $\sigma$-exceptional prime divisors (cf.~\cite[Lemma 18]{Kol18}). 
By the assumption, we have the equality
$$\sharp (S_t\cup \{E_1,\cdots, E_k\}) =  \rho(\widetilde{X})+1.$$ 
For each $t$, we then have a numerical equivalence of two non-zero effective divisors which are combinations of prime divisors in $S_t$ and $\sigma$-exceptional prime divisors such that they have no common component. We write them in the following form
$$\sum_{i\in I_t} a_i \widetilde{D}_i+ \widetilde{E}_t\equiv \sum_{j\in J_t} b_j \widetilde{D}_j+\widetilde{F}_t$$
where $I_t$ and $J_t$ are two (possibly empty) disjoint subsets of $S_t$, $a_i>0$, $b_j>0$, and $\widetilde{E}_t, \widetilde{F}_t$ are effective $\sigma$-exceptional divisors. Here without ambiguity, we identify  $\widetilde{D}_i$ with the index $i$ in $I_t$. 

Let $H$ be an ample divisor on $X$.
By Lemma \ref{lem_weak_sdd} and the projection formula,
$$\widetilde{L_f}\cdot (\sum_{i\in I_t} a_i \widetilde{D}_i+\widetilde{E}_t)\cdot (\sigma^*H)^{n-2}=L_f\cdot (\sum_{i\in I_t} a_i D_i)\cdot H^{n-2}=0$$
where $D_i=\sigma_*(\widetilde{D_i})\in P_f$.
Applying Proposition \ref{prop_generalization_hodge}, for each $t$, 
we may assume (after some scalar multiplication)
$$\widetilde{L_f}\equiv \sum_{i\in I_t} a_i \widetilde{D}_i+ \widetilde{E}_t\equiv \sum_{j\in J_t} b_j \widetilde{D}_j+\widetilde{F}_t$$
and we also have
$$\widetilde{L_f}^2\equiv_w \widetilde{L_f}\cdot (\sum_{i\in I_t} a_i \widetilde{D}_i+ \widetilde{E}_t)\equiv_w \widetilde{L_f}\cdot (\sum_{j\in J_t} b_j \widetilde{D}_j+\widetilde{F}_t)\equiv_w 0.$$
This further implies that 

\begin{equation}\label{eq1}
	\widetilde{L_f}\cdot \widetilde{D}_i\equiv_w \widetilde{L_f}\cdot \widetilde{D}_j\equiv_w \widetilde{L_f}\cdot \widetilde{E} \equiv_w 0 
\end{equation}
for any $i\in I_t$, $j\in J_t$ and any prime divisor $\widetilde{E}$ in $\widetilde{E}_t$ and $\widetilde{F}_t$.

We claim that $I_t$ and $J_t$ are non-empty.
Suppose the contrary, for example, that $I_t$ is empty.
Then $\widetilde{L_f}\equiv \widetilde{E}_t\neq 0$.
However, this contradicts with the negativity lemma (cf.~\cite[Lemma 3.39]{KM98}).

Consider the pair $(I_t, J_{t'})$ for any $t, t'$.
Note that
\begin{equation}\label{eq2}
\sum_{i\in I_t} a_i \widetilde{D}_i+ \widetilde{E}_t\equiv \sum_{j\in J_{t'}} b_j \widetilde{D}_j+\widetilde{F}_{t'} \, (\equiv \widetilde{L_f})
\end{equation}

We claim that $\widetilde{E}_t$ and $\widetilde{F}_{t'}$ have no common components.
Readjust it a bit in the following way, where $\Delta$ is a $\sigma$-exceptional effective $\mathbb{R}$-divisor, 
$$\sum_{i\in I_t} a_i \widetilde{D}_i+ (\widetilde{E}_t-\Delta)\equiv \sum_{j\in J_{t'}} b_j \widetilde{D}_j+(\widetilde{F}_{t'}-\Delta),$$
so that $\widetilde{E}_t-\Delta$ and $\widetilde{F}_{t'}-\Delta$ are effective but with no common components.
Note that $$\widetilde{L_f}\cdot (\sum_{i\in I_t} a_i \widetilde{D}_i+ (\widetilde{E}_t-\Delta))\equiv_w 0.$$
By Proposition \ref{prop_generalization_hodge}, 
for some $e>0$,
we have $$e\widetilde{L_f}\equiv \sum_{i\in I_t} a_i \widetilde{D}_i+ (\widetilde{E}_t-\Delta) .$$
Then $(1-e)\widetilde{L_f}\equiv \Delta$,
and hence $e\le 1$ since both $\widetilde{L_f}$ and $\Delta$ are pseudo-effective and $X$ is projective.
By the negativity lemma again (cf.~\cite[Lemma 3.39]{KM98}), $e=1$ and $\Delta \equiv 0$, so $\Delta = 0$ and the claim is proved.
By Proposition \ref{prop_generalization_hodge} and the display (\ref{eq2}) above and since 
$\widetilde{L_f} \cdot \widetilde{L_f} \equiv_w 0$,
we have $\widetilde{D}_i\cap \widetilde{D}_j=\emptyset$
for any $i\in I_t$ and $j\in J_{t'}$.

The same argument works for the pairs $(I_t, I_{t'})$ and $(J_t, J_{t'})$ with $t\neq t'$.
In particular, we may pick one prime divisor in $I_t$ and one prime divisor in $J_t$ for each $t$, and all these $2m$ prime divisors are pairwise disjoint.
This completes the proof.
\end{proof}

With $\sharp P_f$ large enough, we can construct an $f$-equivariant fibration over a curve.

\begin{proposition}\label{lem_reduction_nef}
Suppose $\sharp  P_f\ge (\lfloor\frac{\rho(\widetilde{X})}{2}\rfloor+1)\cdot (\rho^{\textup{ns}}(X)+1)$. 
Then there exists a fibration $\widetilde{\pi}:\widetilde{X}\to C$ onto a smooth curve $C$ such that the following hold.
\begin{enumerate}
\item  $\widetilde{L_f}\equiv \widetilde{\pi}^*H_C$ for some ample $\R$-Cartier divisor $H_C$ on $C$.
\item  The fibration $\widetilde{\pi}$ factors through a fibration $\pi:X\to C$ such that $L_f\equiv \pi^*H_C$.
\item There exists a $d_1(f)$-polarized endomorphism $g:C\to C$ such that $\pi\circ f=g\circ \pi$.
\end{enumerate}
\end{proposition}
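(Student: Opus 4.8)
The plan is to extract the fibration from the large supply of pairwise disjoint periodic SDD prime divisors produced in Lemma \ref{lem_N_disjoint}. First I would invoke that lemma: since $\sharp P_f\ge(\lfloor\frac{\rho(\widetilde{X})}{2}\rfloor+1)\cdot(\rho^{\textup{ns}}(X)+1)$, we obtain at least $2(\lfloor\frac{\rho(\widetilde{X})}{2}\rfloor+1)>\rho(\widetilde{X})$ pairwise disjoint prime divisors $\widetilde{D}_1,\dots,\widetilde{D}_N$ in $\widetilde{P_f}$, each satisfying $\widetilde{L_f}\cdot\widetilde{D}_u\equiv_w0$ and, by the proof of that lemma, $\widetilde{L_f}\equiv a_u\widetilde{D}_u+(\text{exceptional})$ after scaling, with $\widetilde{L_f}^2\equiv_w0$. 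Because $N>\rho(\widetilde{X})=\dim_{\R}\N^1(\widetilde{X})$, these $N$ disjoint effective divisors must satisfy a nontrivial numerical relation; combined with the fact that each is numerically proportional (modulo exceptional divisors) to the fixed class $\widetilde{L_f}$, one concludes that a suitable positive multiple of $\widetilde{L_f}$ is numerically equivalent to each $\widetilde{D}_u$ up to exceptional divisors, and $\widetilde{L_f}^2\equiv_w0$.

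The geometric input is then a standard base-point-free/fibration criterion: a nef $\R$-Cartier divisor $\widetilde{L_f}$ on the smooth projective variety $\widetilde{X}$ with $\widetilde{L_f}^2\equiv_w0$ and $\widetilde{L_f}\not\equiv0$ which is, up to numerical equivalence, an effective divisor supported on pairwise disjoint prime divisors. Here I would argue that after clearing the exceptional part, $\widetilde{L_f}$ (or a positive multiple) is numerically equivalent to an effective divisor whose connected components are the individual $\widetilde{D}_u$'s; having $\ge 2$ disjoint effective divisors in one numerical class with self-intersection $\equiv_w 0$ forces the linear (or at least numerical) system to define a map to a curve. Concretely, I would pass to a $\Q$-divisor in the class (using that $d_1(f)$-eigenvalue considerations let us rationalize, or directly that numerically trivial-squared nef divisors which are combinations of disjoint prime divisors arise from a morphism), apply a result such as the one quoted for compact Kähler manifolds / its algebraic analogue to get a fibration $\widetilde{\pi}:\widetilde{X}\to C$ onto a smooth curve with $\widetilde{L_f}\equiv\widetilde{\pi}^*H_C$ for some ample $\R$-divisor $H_C$ on $C$; this proves (1). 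Then (2) is immediate from Lemma \ref{lem_nef_descend}: since $\widetilde{L_f}=\sigma^*L_f$ with $L_f$ nef and $\widetilde{L_f}\equiv\widetilde{\pi}^*H_C$, the fibration $\widetilde{\pi}$ descends to $\pi:X\to C$ with $L_f\equiv\pi^*H_C$.

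For (3), the key point is that $\pi$ is essentially $f$-equivariant. Since $f^*L_f\equiv d_1(f)L_f$, the fibration $\pi$ (which is determined numerically by the class of $L_f$, being the contraction associated to the nef class $L_f$ with $L_f^{\dim}=0$ and $L_f^{\dim-1}\ne0$) must be preserved by $f$ up to an automorphism of $C$; more precisely $f$ permutes the fibres of $\pi$, hence there is a surjective endomorphism $g:C\to C$ with $\pi\circ f=g\circ\pi$. Pulling back, $g^*H_C$ and $d_1(f)H_C$ have the same pullback to $X$ (both $\equiv$ a multiple of $L_f$ up to the relation $f^*\pi^*H_C=\pi^*g^*H_C\equiv d_1(f)\pi^*H_C$), so $g^*H_C\equiv d_1(f)H_C$ on the curve $C$; since $H_C$ is ample and numerical and linear equivalence agree up to torsion on a curve, after replacing $H_C$ by a multiple $g$ is $d_1(f)$-polarized. (One also needs $d_1(f)$ to be an integer here, which follows since $g$ is then an endomorphism of a curve of degree $d_1(f)$, or one argues via the norm criterion as in the proof of Theorem \ref{thm-all}(2).)

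The main obstacle I anticipate is the passage from "numerical" statements to an actual morphism: knowing only that $\widetilde{L_f}$ is numerically equivalent to an effective combination of disjoint prime divisors with $\widetilde{L_f}^2\equiv_w0$ does not immediately produce a fibration, because $\widetilde{L_f}$ need not itself be effective or semiample. The delicate step is therefore to show that the numerical class of $\widetilde{L_f}$ (or of a disjoint-supported effective representative) is semiample and that its Iitaka fibration has one-dimensional base — this is where one must carefully use that there are \emph{many} disjoint divisors in the class (so the fibration cannot have base dimension $\ge2$) together with the higher-dimensional Hodge index input of Proposition \ref{prop_generalization_hodge} to pin down the base as a curve, and then a base-point-freeness argument (Zariski's theorem on free linear systems on surfaces cut out by the disjoint divisors, or the cited Kähler-analogue lemma) to realize it as an honest morphism; descending via Lemma \ref{lem_nef_descend} and transporting $f$-equivariance are comparatively routine.
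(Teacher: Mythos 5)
Your setup is right — the hypothesis plus Lemma \ref{lem_N_disjoint} gives $2(\lfloor\rho(\widetilde{X})/2\rfloor+1)\ge\rho(\widetilde{X})+1$ pairwise disjoint prime divisors $\widetilde{D}_u\in\widetilde{P_f}$ with $\widetilde{L_f}\cdot\widetilde{D}_u\equiv_w 0$, and your treatment of (2) via Lemma \ref{lem_nef_descend} and of (3) via the projection-formula computation $f^*\pi^*H_C=\pi^*g^*H_C\equiv d_1(f)\pi^*H_C$ (plus the rigidity lemma to descend $f$, and the norm criterion of Theorem \ref{thm-all}(2) to upgrade $\equiv$ to genuine polarization) matches the paper. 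But the step you yourself flag as ``the main obstacle'' — producing the fibration $\widetilde{\pi}:\widetilde{X}\to C$ from the numerical data — is a genuine gap, and the routes you sketch (semiampleness of $\widetilde{L_f}$, Iitaka fibration, Zariski's theorem on surfaces, or ``the Kähler-analogue lemma'') would not close it: $\widetilde{L_f}$ is only a nef $\R$-numerical class, not known to be effective or semiample, and proving semiampleness of a nef class with $\widetilde{L_f}^2\equiv_w 0$ is exactly the hard part. You also have the logical order reversed: the individual $\widetilde{D}_u$ are not shown to be proportional to $\widetilde{L_f}$ \emph{before} the fibration exists (Lemma \ref{lem_N_disjoint} only controls certain effective combinations), so you cannot use that proportionality as input to build the map.

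The paper's resolution is a single external citation you are missing: the theorem of Bogomolov--Pirutka--Silberstein \cite[Theorem 1.1]{BPS16}, which states that a smooth projective variety carrying strictly more than $\rho(\widetilde{X})$ pairwise disjoint prime divisors admits a fibration $\widetilde{\pi}:\widetilde{X}\to C$ onto a smooth curve contracting all of them, with at least three of the $\widetilde{D}_u$ being full (set-theoretic) fibres. Only \emph{after} that does one argue: $\widetilde{D}_1\equiv a\widetilde{D}_2$ because both are fibres, so $\widetilde{D}_1,\widetilde{D}_2$ satisfy the hypotheses of Proposition \ref{prop_generalization_hodge} together with $\widetilde{L_f}\cdot\widetilde{D}_1\equiv_w 0$, yielding $\widetilde{L_f}\equiv t\widetilde{D}_1$ and hence $\widetilde{L_f}\equiv\widetilde{\pi}^*H_C$ with $H_C$ a positive multiple of the point class. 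Your instinct that a nontrivial numerical relation among $N>\rho(\widetilde{X})$ disjoint divisors should force a pencil is essentially the idea \emph{behind} the BPS theorem, but it is a nontrivial published result, not a routine deduction; without it (or an equivalent argument) the proof of part (1) is incomplete.
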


\begin{proof}
(1) By Lemma \ref{lem_N_disjoint}, there are $\rho(\widetilde{X})+1$ pairwise disjoint prime divisors $\widetilde{D}_u$ in $\widetilde{P_f}$ with $\widetilde{L_f}\cdot \widetilde{D}_u\equiv_w 0$. 
By \cite[Theorem 1.1]{BPS16}, there is a fibration $\widetilde{\pi}:\widetilde{X}\to C$ onto a smooth projective curve such that $\widetilde{\pi}(\widetilde{D}_u)$ is a point for each $u$ and $\widetilde{D}_u=\widetilde{\pi}^{-1}(\widetilde{\pi}(\widetilde{D}_u))$ for $1\le u\le 3$. 
Now $\widetilde{D}_1\equiv a\widetilde{D}_2$ for some $a>0$.
By Proposition \ref{prop_generalization_hodge}, $\widetilde{L_f}\equiv t \widetilde{D}_1$ for some $t>0$.
Note that $\widetilde{\pi}^{*}(\widetilde{\pi}(\widetilde{D}_1))=b\widetilde{D}_1$ for some $b>0$.
Letting $H_C:=t\widetilde{\pi}(\widetilde{D}_1)/b$,
(1) follows.

(2) follows from (1) and Lemma \ref{lem_nef_descend}. 

(3) For every curve $B$ lying in a fibre of $\pi$, by (2) and the projection formula, we have
$$\pi_*(f_*B)\cdot H_C=f_*B\cdot L_f= B \cdot f^*L_f = d_1(f) B\cdot L_f=d_1(f) \pi_*B\cdot H_C=0$$
So every fibre of $\pi$ will also be contracted by $\pi\circ f$. 
By the rigidity lemma (cf.~\cite[Lemma 1.15]{Deb01}), $\pi\circ f$ factors through $\pi$ and thus $f$ descends to a surjective endomorphism $g=f|_C$. Moreover, if we assume $g^*H_C\equiv qH_C$, then
$$f^*L_f\equiv f^*\pi^*H_C=\pi^*g^*H_C\equiv q\pi^*H_C\equiv qL_f,$$
which implies $q=d_1(f)$. Hence $g$ is $d_1(f)$-polarized. This proves (3).
\end{proof}

With all the preparations done, 
we are ready to prove  Theorems \ref{thm_divisor_case}, \ref{main-thm-pf} and \ref{main-thm-equiv}.

\begin{proof}[Proof of ~Theorem~\ref{thm_divisor_case}]
If $d_1(f)=1$, then $\sharp P_f=0$ and the theorem is trivial.
From now on we assume that $d_1(f)>1$.

Suppose $\sharp P_f\ge (\rho^{\textup{ns}}(X)+1)\cdot(\lfloor\frac{\rho(\widetilde{X})}{2}\rfloor+1)$. 
By Proposition \ref{lem_reduction_nef}, there is an $f$-equivariant fibration $\pi:X\to C$ onto a smooth curve $C$ such that $g:=f|_C$ is $d_1(f)$-polarized.
Let $\mathcal{B}$ be the finite set of prime divisors in the reducible fibres of $\pi$.
By the same argument as in the proof of Proposition \ref{prop-pf-bound}, we have an injection $\varphi:P_{f}\backslash \mathcal{B}\to \Fix(g)$, sending $D$ to $\pi(D)$. 
Let $a:=\sharp \mathcal{B}$.
By Lemma \ref{lem-lefschetz}, we can conclude the theorem:
$$\sharp P_f\le \sharp \Fix(g)+a\le (1+\sqrt{d_1(f)})^2+a .$$
\end{proof}

\begin{proof}[Proof of Theorem \ref{main-thm-pf}]
Apply Theorem \ref{thm_divisor_case} to the 
normalization of $X$ (cf.~Lemma \ref{bir_invariant_case}).
\end{proof}

\begin{proof}[Proof of Theorem \ref{main-thm-equiv}]
By taking normalization of $X$, we may assume that  $X$ is normal (cf.~Lemmas \ref{bir_invariant_case} and \ref{bir_totally_periodic_case}).

``$(2)\Rightarrow (1).$''
By \cite[Theorem 5.1]{Fak03}, $\Per(g)$ is Zariski dense in $C$.
Since $\pi$ is a fibration, it has irreducible and reduced general fibres.
Therefore $X_c:=\pi^{-1}(c)=\pi^*c$ is an $f$-periodic prime divisor for general $c\in \Per(g)$.
Suppose $f^s(X_c)=X_c$.
Then $d_1(f^s|_{X_c})=d_1(f^s|_\pi)<d_1(f^s)$ by the assumption.
So $X_c\in P_{f^\infty}$ and $\sharp P_{f^\infty}$ is infinite.

``$(1)\Rightarrow (2).$''
After a sufficient iteration of $f$, we may assume that $\sharp P_{f}\gg 1$. 
Applying Proposition \ref{lem_reduction_nef}, there is an $f$-equivariant fibration $\pi:X\to C$ onto a smooth curve $C$ such that $g:=f|_C$ is $d_1(f)$-polarized.
We may also assume that $D=\pi^{-1}(\pi(D))$ for some $D\in P_f$.
Then $d_1(f|_\pi)=d_1(f|_D)<d_1(f)$, which completes the first part of our theorem.

Now we prove the finiteness of $P_{f^{\infty}}^{-1}$. Suppose the contrary that $P^{-1}_{f^{\infty}}$ is infinite.
By the above equivalent condition, there is an $f$-equivariant (after iteration) fibration $\pi:X\to C$ onto a smooth curve $C$ such that $g:=f|_C$ is $d_1(f)$-polarized. 
Let $\mathcal{B}$ be the finite set of prime divisors in the reducible fibres of $\pi$.
By the same argument as in the proof of Proposition \ref{prop-pf-bound} and \cite[Lemma 7.5]{CMZ20}, we have an injection $\varphi:P^{-1}_{f^{\infty}}\backslash \mathcal{B}\to P^{-1}_{g^{\infty}}$, sending $D$ to $\pi(D)$. 
Note that there are at most finitely many $g^{-1}$-periodic points (cf.~\cite[Corollary 3.8]{MZ20}).
So the theorem is proved.
\end{proof}

\begin{corollary}
Let $f: X \to X$ be a surjective endomorphism of a normal projective variety $X$  with $\sharp P_{f^{\infty}}=\infty$.
Then $D$ is either $\Q$-Cartier or $f^{-1}$-periodic for any $D\in P_{f^{\infty}}$.
\end{corollary}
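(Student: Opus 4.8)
The plan is to exploit the fibration furnished by Theorem~\ref{main-thm-equiv}. Since $\sharp P_{f^\infty}=\infty$, after replacing $f$ by a suitable power (which affects neither $P_{f^\infty}$, nor $\mathbb{Q}$-Cartierness, nor the property of being $f^{-1}$-periodic) we obtain an $f$-equivariant fibration $\pi\colon X\to C$ onto a smooth projective curve with $g:=f|_C$ being $d_1(f)$-polarized (so $\deg g=d_1(f)>1$) and $d_1(f|_\pi)<d_1(f)$. First I would observe that any $D\in P_{f^\infty}$ is vertical: it is an $f^s$-periodic prime divisor with $d_1(f^s|_D)<d_1(f^s)$, so if $\pi|_D$ were dominant the product formula would force $d_1(f^s|_D)\ge d_1(g^s)=d_1(f)^s=d_1(f^s)$, a contradiction. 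Hence $\pi(D)=\{c\}$ for a $g^s$-periodic point $c$ (as $g^s(c)=\pi(f^s(D))=\pi(D)=c$), and $D$ is a prime component of the fibre $\pi^{-1}(c)$.

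If this fibre is irreducible, then $D=(\pi^{-1}(c))_{\mathrm{red}}$ and the Cartier divisor $\pi^{*}c$ (the point $c$ being Cartier on the smooth curve $C$, and $\pi$ a morphism) is a positive integral multiple of $D$; thus $D$ is $\mathbb{Q}$-Cartier and we are done. So the remaining case is that $\pi^{-1}(c)$ is reducible, and here I would show $D$ is $f^{-1}$-periodic. Let $R:=\{c'\in C:\pi^{-1}(c')\text{ reducible}\}$, a finite set, and let $\mathcal V$ be the (finite) set of all prime components of the fibres $\pi^{-1}(c')$, $c'\in R$; note $D\in\mathcal V$. The key point will be the inclusion $g^{-1}(R)\subseteq R$; granting this, finiteness of $R$ and surjectivity of $g$ give $g^{-1}(R)=R=g(R)$, and then for every $V\in\mathcal V$ (a component of $\pi^{-1}(c')$, $c'\in R$) all prime components of $f^{-1}(V)\subseteq\pi^{-1}(g^{-1}(c'))$ again lie in $\mathcal V$, since $g^{-1}(c')\subseteq R$. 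Consequently the reduced divisors $(f^{ns})^{-1}(D)$, $n\ge 0$, form an increasing chain (because $f^s(D)=D$) all of whose prime components lie in the finite set $\mathcal V$; the chain stabilizes at some $\Delta$ with $f^{-s}(\Delta)=\Delta=f^s(\Delta)$, on which $f^{\pm s}$ act as mutually inverse permutations of the finitely many prime components. As $D$ is one of these components, $f^{-s\ell}(D)=D$ for some $\ell\ge1$, i.e.\ $D$ is $f^{-1}$-periodic; this yields the corollary.

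The main obstacle is the claim that $g$ pulls reducible fibres back to reducible fibres. Suppose $c''\in g^{-1}(c')$ with $\pi^{-1}(c'')$ irreducible while $\pi^{-1}(c')$ is reducible; I would derive a contradiction by a cycle-theoretic pushforward argument. Since $X$ is reduced and $C$ is a smooth curve, $\pi$ is flat, so the scheme-theoretic fibres $X_t=\pi^{*}t$ form a flat family; restricting $f$ over affine neighbourhoods $U\ni c'$ and $U''\ni c''$ with $g(U'')\subseteq U$ and $g^{-1}(c')\cap U''=\{c''\}$, comparing $f^{*}[X_{c'}]$ with $[X_{c''}]$ over $U''$, and using $f_{*}f^{*}=(\deg f)\cdot\mathrm{id}$ together with $\deg f=\deg g\cdot\deg(f|_{\text{general fibre}})$, I expect to obtain the identity $f_{*}[X_{c''}]=(\deg f/\deg g)\,[X_{c'}]$ of algebraic cycles. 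But $[X_{c''}]$ is supported on the single prime divisor $(\pi^{-1}(c''))_{\mathrm{red}}$, hence $f_{*}[X_{c''}]$ is supported on the single prime divisor $f((\pi^{-1}(c''))_{\mathrm{red}})$, whereas $[X_{c'}]=\pi^{*}c'$ is supported on the $\ge2$ components of $\pi^{-1}(c')$ — impossible. The delicate bookkeeping lies precisely in justifying the displayed pushforward identity (equivalently, the compatibility of $f$ with specialization of fibres and the multiplicativity of $\deg$ along $\pi$); the rest of the argument is then formal.
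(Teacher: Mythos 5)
Your overall architecture coincides with the paper's: pass to an iterate, invoke Theorem \ref{main-thm-equiv} to obtain the $f$-equivariant fibration $\pi\colon X\to C$ with $g=f|_C$ polarized, observe that every $D\in P_{f^\infty}$ is vertical, handle the irreducible-fibre case by $D=\tfrac1m\pi^*c$, and reduce the reducible-fibre case to the invariance of the finite set $R\subseteq C$ of points with reducible fibre, after which the permutation argument on the finite set of components is fine. The paper disposes of the key point $g^{-1}(R)=R$ simply by citing \cite[Lemma 7.4]{CMZ20}.

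The genuine gap is exactly the step you flag: your proposed proof of $g^{-1}(R)\subseteq R$ via the identity $f_*[X_{c''}]=(\deg f/\deg g)[X_{c'}]$ does not go through as sketched, and at the level of supports that identity is equivalent to the statement being proved. Globally, $f^*\pi^*=\pi^*g^*$ and $f_*f^*=(\deg f)\cdot\mathrm{id}$ only yield $\sum_{c''\in g^{-1}(c')}e_{c''}\,f_*\pi^*[c'']=(\deg f)\,\pi^*[c']$, which does not isolate a single $c''$; and the localization to $U''\ni c''$ does not repair this, because $U''$ is not a union of connected components of $g^{-1}(U)$ in the Zariski topology, so $f\colon\pi^{-1}(U'')\to\pi^{-1}(U)$ is not proper and the degree formula is unavailable there. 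A telltale sign that the sketch cannot be completed as written is that it never uses connectedness of the fibres of $\pi$, whereas the claim is false without it. A correct direct argument does use it: if $X_{c''}$ is irreducible, then $f(X_{c''})$ is an irreducible closed subset of $X_{c'}$ of dimension $n-1$, hence equals a single component $D_i$; if $X_{c'}$ were reducible, connectedness gives a point $x\in D_i\cap D_j$ with $j\neq i$, any preimage $y\in X_{c''}$ of $x$ lies on an irreducible component $E$ of the divisor $f^{-1}(D_j)$, and $E$, being an irreducible divisor inside the disjoint union of the fibres over $g^{-1}(c')$ and meeting $X_{c''}$, must equal $X_{c''}$; then $D_j=f(E)=D_i$, a contradiction. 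With this argument (or simply the citation of \cite[Lemma 7.4]{CMZ20}) inserted, your proof is complete and agrees with the paper's.
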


\begin{proof}
We apply Theorem \ref{main-thm-equiv} and use the notation there.
If $D=\pi^{-1}(\pi(D))$, then $D$ is $\Q$-Cartier.
Otherwise, $\pi^{-1}(\pi(D))$ is reducible.
Denote by $\Sigma$ the finite subset of $C$ over which $\pi$ has reducible fibres.
Then $g^{-1}(\Sigma)=\Sigma$ by \cite[Lemma 7.4]{CMZ20}.
Therefore $D$ is an irreducible component of $\pi^{-1}(\Sigma)$ and hence $f^{-1}$-periodic.
\end{proof}

\end{document}